\documentclass[11pt]{article}
\usepackage{amsmath,amssymb,amsthm,mathtools}
\usepackage{booktabs}
\usepackage[T1]{fontenc}
\usepackage{microtype}
\usepackage[hidelinks]{hyperref}
\usepackage[margin=1.15in]{geometry}

\newtheorem{theorem}{Theorem}[section]
\newtheorem{proposition}[theorem]{Proposition}
\newtheorem{lemma}[theorem]{Lemma}
\newtheorem{corollary}[theorem]{Corollary}
\newtheorem{remark}[theorem]{Remark}

\newcommand{\F}{\mathbb F}
\newcommand{\C}{\mathbb C}
\newcommand{\Q}{\mathbb Q}

\newcommand{\supp}{\operatorname{supp}}

\title{Parity-Sensitive Fourier Uncertainty and Zero-Set Rigidity\\on the Finite Parabola}
\author{Dongwei Li\\
School of Mathematics, Hefei University of Technology\\
Hefei 230601, China\\
\texttt{dongweili@hfut.edu.cn}}
\date{}

\begin{document}
\maketitle

\begin{abstract}
Let $p\ge5$ be prime, let $R\subset\mathbb F_p$ satisfy
$1\le |R|\le p-1$, and set
\[
 F(a,b)=\sum_{x\in R}c_x\omega^{ax^2+bx},
 \qquad (a,b)\in\mathbb F_p^2,
 \qquad c_x\ne0,
\]
where $\omega=e^{2\pi i/p}$.  We prove a parity-sensitive uncertainty principle for complex Fourier spectra
supported on the finite parabola.
If $|R|=2r$, then
\[
 |Z(F)|\le p+2r-2,
\]
and the bound is sharp for every even support size $2\le |R|\le p-1$, including the endpoint $|R|=p-1$.  If $|R|=2r+1$, then
\[
 |Z(F)|\le \min\{p+2r-2,\,r(r+1)\}.
\]
Thus fixed odd spectral sparsity forces a number of zeros bounded
independently of $p$, whereas sufficiently large even zero sets are rigid:
for even support $|R|=2r$, if $|Z(F)|>r(r+1)$, then $F$ vanishes
identically on a nonvertical affine line and has at most $r(r-1)$ further
zeros.

The proof reduces arbitrary complex coefficients to cyclotomic data and
uses a truncated $(1-\omega)$-adic expansion.  The first nonzero finite-field
jets satisfy
\[
 \partial_b^2Q_j=\partial_aQ_{j-1},
\]
turning the two-dimensional zero problem into a multiplicity and
component-persistence problem for algebraic curves.

As an application, we solve the total-support uncertainty problem for the
standard complete set of $p+1$ mutually unbiased bases in $\mathbb C^p$:
\[
 \min_{0\ne\psi\in\mathbb C^p}
 \sum_j |\operatorname{supp}_{\mathcal B_j}(\psi)|
 =p^2-p+2.
\]
We also classify all extremizing projective states and give an exact
enumeration formula.
\end{abstract}

\medskip
\noindent\textbf{Keywords.}
uncertainty principles; finite Fourier analysis; finite parabola; zero-set rigidity;
mutually unbiased bases; extremizer classification.

\medskip
\noindent\textbf{2020 Mathematics Subject Classification.}
Primary 43A25; Secondary 11T23, 42C15, 81P55.

\section{Introduction}
\label{sec:introduction}

A basic theme in finite Fourier analysis is that a function and its Fourier
transform cannot both be sparse.  Classical group-theoretic and finite
versions include the results of Donoho--Stark, Smith and Meshulam; for a broad
modern survey of uncertainty principles and their variants, see
\cite{DonohoStark,SmithGroups,Meshulam,WigdersonSurvey}.  On the cyclic group
of prime order the phenomenon is exceptionally rigid: Chebotarev's
nonvanishing-minor theorem says that every square minor of the prime Fourier
matrix is nonzero; see, for example, the generalized Vandermonde treatment of
Evans--Isaacs and the recent extension of Loukaki
\cite{EvansIsaacs,LoukakiChebotarev}.  Tao used this rigidity to prove
\begin{equation}
 |\supp f|+|\supp\widehat f|\ge p+1
 \label{eq:intro-tao}
\end{equation}
for every nonzero $f:\F_p\to\C$ \cite{Tao}; see also the symmetry refinement
\cite{GarciaKaraaliKatz}.  The question considered here is what survives of
this rigidity in two dimensions when one Fourier support is constrained to a
curved one-dimensional set.  For the finite parabola we obtain a sharp
even-support bound, together with a striking parity dichotomy and a
classification of the large-zero-set mechanism.

For $R\subset\F_p$ and $c_x\ne0$, set
\begin{equation}
 F_{R,c}(a,b)
 =\sum_{x\in R}c_x\omega^{ax^2+bx},
 \qquad (a,b)\in\F_p^2,
 \label{eq:intro-parabolic-sum}
\end{equation}
where $\omega=e^{2\pi i/p}$.  Up to Fourier normalization and sign
conventions, this is the inverse Fourier transform of a function supported on
\[
 \Gamma=\{(x^2,x):x\in\F_p\}\subset\F_p^2.
\]
Write
\[
 Z(F_{R,c})=\{(a,b)\in\F_p^2:F_{R,c}(a,b)=0\}.
\]
Our first main theorem determines the zero-set scale and its parity-sensitive
structure.

\begin{theorem}[Parity-sensitive parabolic zero theorem]
\label{thm:intro-parabolic}
Let \(p\ge5\) be prime and let \(1\le |R|\le p-1\).

\begin{enumerate}
\item If \(|R|=1\), then \(F_{R,c}\) has no zeros.

\item If
\[
 |R|=2r,
\]
then
\[
 \boxed{|Z(F_{R,c})|\le p+2r-2.}
\]
The bound is sharp for every even support size \(2\le |R|\le p-1\).

\item If
\[
 |R|=2r+1,
\]
then
\[
 \boxed{
 |Z(F_{R,c})|
 \le
 \min\{p+2r-2,\ r(r+1)\}.
 }
\]

\item If \(|R|=2r\) and
\[
 |Z(F_{R,c})|>r(r+1),
\]
then there exists a nonvertical affine line \(\ell\subset\F_p^2\)
such that
\[
 F_{R,c}|_\ell\equiv0,
 \qquad
 |Z(F_{R,c})\setminus\ell|\le r(r-1).
\]
\end{enumerate}
\end{theorem}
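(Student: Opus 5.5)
The plan follows the route announced in the abstract: reduce to cyclotomic coefficients, extract finite-field jets, and then count zeros of algebraic curves.

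\emph{Reduction to cyclotomic data.} Fix the zero set $Z=Z(F_{R,c})$. The conditions $F(a,b)=0$ for $(a,b)\in Z$ form a linear system over $\Q(\omega)$ in the unknowns $(c_x)_{x\in R}$. If it has a solution with all $c_x\ne0$, it also has one in $\Q(\omega)$ with all coordinates nonzero, since a generic point of the solution space avoids the coordinate hyperplanes. Clearing denominators and dividing out the maximal power of $\pi=1-\omega$, I may assume every $c_x\in\mathbb Z[\omega]$ and that not all $c_x$ lie in $(\pi)$.

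\emph{Jet expansion.} Write $\omega^{t}=(1-\pi)^{t}$ and expand each $c_x$ $\pi$-adically with digits $\gamma_{x,k}\in\F_p$. Then $F(a,b)\equiv\sum_j \pi^j Q_j(a,b)\pmod{\pi^{N}}$. Here
\[
 Q_j(a,b)=\sum_{k+m=j}\sum_x\gamma_{x,k}\,(-1)^m\binom{ax^2+bx}{m}
\]
is a polynomial over $\F_p$ of degree at most $j$ in $(a,b)$, valid for $j<p-1$. Let $j_0$ be the first index with $Q_{j_0}\not\equiv0$ as a function on $\F_p^2$. Every zero of $F$ is then a zero of $Q_{j_0}$.

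The leading jets obey the heat-type identity $\partial_b^2Q_j=\partial_aQ_{j-1}$, which comes from $\partial_b^2(bx)^m\sim x^2$ versus $\partial_a(ax^2)$ at the level of binomial coefficients. Together with the moment structure (the vanishing of $Q_i$ for $i<j_0$ forces the moments $\sum_x\gamma_{x,k}x^i$ to vanish in low degree), this shows that $j_0\le |R|-1$. It also shows that the top-degree part of $Q_{j_0}$ is, up to a nonzero constant, $(\text{leading moment})\cdot(a x^2+bx)$-type forms. Concretely, $Q_{j_0}$ is a polynomial whose leading homogeneous part, viewed as a weighted polynomial in which $b$ has weight 1 and $a$ has weight 2, has weighted degree $\le |R|-1$, and the parity of $|R|$ controls whether a pure power of $a$ can lead.

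\emph{Counting.} For odd $|R|=2r+1$ I aim to show that $Q_{j_0}$ (or a combination of two consecutive jets, again via $\partial_b^2Q_j=\partial_aQ_{j-1}$) has no linear factor of the form $b-\lambda a-\mu$ that would contribute $p$ points. The curve then has weighted degree $2r$ with no line components, and a Bézout / Schwarz--Zippel type count in the weighted setting, pairing it with $\partial_bQ_{j_0}$ or with the next jet, gives at most $r(r+1)$ common zeros. For even $|R|=2r$ a single line component may persist. It is nonvertical because vertical lines $a=$ const reduce to one-dimensional sums in $b$, where Tao's bound \eqref{eq:intro-tao} allows at most $|R|-1$ zeros. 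Off that line the residual curve has weighted degree $2r-2$, so the same counting gives at most $r(r-1)$ further zeros, which is part (4). The uniform bound $p+2r-2$ comes from a different route: on each vertical line $a=a_0$, $F(a_0,\cdot)$ is a one-variable sum with support $|R|$, so by Tao it has at most $|R|-1$ zeros unless it vanishes identically, which it cannot do. A finer accounting, using the line structure above, combines ``one full line'' with $2r-2$ extra points. Part (1) is trivial.

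For sharpness, I will take $R=\{\pm x_1,\dots,\pm x_r\}$ with $c_{-x}=-c_x$. Then $F(a,0)=0$ for all $a$, giving $p$ zeros on the line $b=0$. I will then choose the coefficients to add $2r-2$ prescribed extra zeros, using a dimension count over the free parameters. The case $|R|=p-1$ will be handled separately by an explicit choice.

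\emph{Main obstacle.} The hardest step is the component-persistence claim: showing that in the odd case no line component of $Q_{j_0}$ can survive to all higher jets, and that in the even case at most one can. I would first try to propagate along a putative line $\ell$ via $\partial_b^2Q_j=\partial_aQ_{j-1}$, inducting on $j$ to force $F|_\ell\equiv0$ exactly. Restricted to $\ell$, $F$ is a one-variable quadratic exponential sum, and vanishing there pairs the points of $R$ symmetrically about a center; this is impossible when $|R|$ is odd.
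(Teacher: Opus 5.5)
Your reduction to $K=\Q(\omega)$ via a generic $K$-point of the solution space is valid, and somewhat cleaner than the paper's rank induction. It keeps the support $R$ and only enlarges the zero set. For part (4) you then recover the line for the original coefficients from the line bound, since more than $2r$ of their zeros lie on it.

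The gap starts with the bound $p+2r-2$, which the proposal does not prove. Tao on each vertical fiber gives only $|Z|\le p(|R|-1)$. The ``finer accounting using the line structure'' presupposes a zero line that need not exist, and never exists for odd support, where $p+2r-2$ is still needed whenever $r(r+1)>p+2r-2$. The missing mechanism has three pieces:
\begin{enumerate}
\item You need $j_0\le\lfloor|R|/2\rfloor$, not merely $|R|-1$. The degree-$j$ part of the $j$-th jet is $\pm\frac1{j!}\sum_x c_{x,0}(ax^2+bx)^j$. Its vanishing kills every moment $S_d=\sum_x c_{x,0}x^d$ with $j\le d\le 2j$, and these windows cover $0,\dots,|R|-1$ by $j=\lfloor|R|/2\rfloor$.
\item $\partial_b^2Q_{j_0}=0$ forces $Q_{j_0}=A(a)+bB(a)$ with $\deg B\le j_0-1$. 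Hence a fiber with $B(\alpha)\ne0$ carries at most one zero.
\item At a common root $\alpha$ of $A,B$ of multiplicity $m$, iteration gives $\partial_b^{2m}Q_{j_0+m}(\alpha,b)=A^{(m)}(\alpha)+bB^{(m)}(\alpha)\not\equiv0$, while $\deg_bQ_{j_0+m}\le 2m+1$. So that fiber carries at most $2m+1$ zeros, and summing gives $p+2\deg B\le p+2j_0-2$.
\end{enumerate}
This iteration needs $\partial_b^2Q_j=\partial_aQ_{j-1}$ exactly, and your binomial $\pi$-adic jets do not satisfy it. For them $\partial_b^2\mathcal G=\log(1-X)\,\partial_a\mathcal G$, which yields $\partial_b^2Q_j=-\partial_aQ_{j-1}-\tfrac12\partial_aQ_{j-2}-\cdots$. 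The substitution $X=1-e^T$ is what makes $\partial_b^2\mathcal G=T\,\partial_a\mathcal G$ exact.

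The bound $r(r+1)$ in (3) and the structure in (4) are only gestured at. Pairing $Q_{j_0}$ with $\partial_bQ_{j_0}$ is illegitimate: exact zeros of $F$ annihilate every jet, but not its derivatives. The weighted-degree claim also fails for even support. For $R=\{x_1,x_2\}$ with $x_1+x_2\ne0$ and $Z(F)\ne\emptyset$, the first nonzero jet contains the weight-$2$ monomial $a$. The workable pairing is ordinary B\'ezout of $Q_{j_0}$ (degree $\le r$) against a later jet, and that requires the two to share no component.

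You place the obstacle in line components, but these need no persistence argument. Jets see $F$ only modulo $\pi^{p-1}$, so they could never force exact vanishing anyway. Restricting $F$ to a nonvertical line gives a one-variable Fourier sum, so Tao plus reflection pairing bound its zeros by $2r$ for odd support. For even support, $2r$ zeros on the line already force $F|_\ell\equiv0$.

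The real obstacles are different. First, vertical factors in $Q_{j_0}=D(a)C(a,b)$ cost up to $3\deg D$ zeros. Second, and above all, a nonlinear component $C$ may divide several subsequent jets. The paper controls these with four tools:
\begin{itemize}
\item Erosion of top forms. At full degree the top form of $Q_{j_0}$ is $a^{j_0-1}(j_0ub+va)/j_0!$ with $u=S_{2j_0-1}$, $v=S_{2j_0}$, and this forces $C\nmid Q_{2j_0-\deg C+1}$.
\item A lemma that persistence of $C$ with nonconstant $b$-coefficient consumes vertical multiplicity, so $C$ breaks within $\deg D+1$ further jets.
\item A resonance computation for graph components $b=-A_0(a)$.
\item The rational-component bound $\deg C\cdot(|R|-1)$.
\end{itemize}
With $m=\deg D$, these feed counts such as $(j_0-m)(j_0+m+1)+3m=j_0(j_0+1)+1-(m-1)^2$, with a separate one-point saving at $m=1$. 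All of this runs inside a simultaneous odd/even induction in which rank-deficient zero systems are cut down to smaller supports. None of it appears in the proposal.

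Finally, in your sharpness construction a dimension count yields a nonzero $q$, but not $q_j\ne0$ for every $j$. You need the observation that a vanishing coordinate would give a $b$-odd sum on support $2s<2r$ with at least $p+2r-2$ zeros, contradicting the upper bound. This also covers $|R|=p-1$ without a separate example.
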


The hypothesis in part~(4) is nonvacuous only when
\[
 r(r+1)<p+2r-2,
\]
since otherwise part~(2) already gives
$|Z(F_{R,c})|\le r(r+1)$.

The theorem shows that parity changes the order of magnitude of the extremal
zero set.  Even support permits line-driven configurations with $p+O(r)$
zeros, and the bound $p+2r-2$ is sharp throughout the stated range.  Odd
support admits no affine zero line and instead satisfies the uniform bound
$r(r+1)$, independent of the ambient prime.  Equivalently, if a nonzero
function on $\F_p^2$ has Fourier support of odd size $2r+1$ contained in the
parabola, then
\[
 |\supp f|\ge p^2-r(r+1).
\]
Thus fixed odd Fourier sparsity forces near-full physical support.  The large
even-support alternative is equally rigid: once the quadratic threshold
$r(r+1)$ is crossed, an entire affine line must appear and all remaining
zeros are quantitatively controlled.

A first consequence resolves the total-support problem for the standard
complete set of mutually unbiased bases in prime dimension.  Let
$\mathcal B_\infty$ be the computational basis and, for $a\in\F_p$, let
\[
 \phi_{a,b}(x)=p^{-1/2}\omega^{ax^2+bx},
 \qquad b,x\in\F_p,
\]
with $\mathcal B_a=\{\phi_{a,b}:b\in\F_p\}$.  These $p+1$ bases form the
standard complete MUB; see \cite{WF,BandyopadhyayMUB,KlappeneckerRottelerConstruction}
for foundational constructions and \cite{DurtMUB,McNultyWeigertReview} for
broader reviews.  Complete MUBs also carry a natural complex-projective
design structure \cite{KlappeneckerRottelerDesign,RoyScottDesign}.  For
$0\ne\psi\in\C^p$, write
\begin{equation}
 \mathcal S_p(\psi)
 :=\sum_{j\in\F_p\cup\{\infty\}}
 |\supp_{\mathcal B_j}(\psi)|,
 \qquad
 T_s(p):=\min_{\psi\ne0}\mathcal S_p(\psi).
 \label{eq:intro-Sp-Ts}
\end{equation}
Fiorentino and Weigert proved the general lower bound
\begin{equation}
 \mathcal S_p(\psi)\ge\frac{(p+1)^2}{2},
 \label{eq:intro-FW-bound}
\end{equation}
and obtained sharp information in the first small prime dimensions; their
work leaves the exact value of $T_s(p)$ open in general \cite{FW}.  Support
uncertainty and sparse representation for pairs of bases have a substantial
literature, including coherence-based formulations and the notions of
complete and higher-order incompatibility
\cite{EladBruckstein,DeBievre,DeBievre2023,XuIncompatibility}.
Those results concern two-basis transition geometry, whereas $T_s(p)$ is a
simultaneous extremal problem over the entire complete set of $p+1$ bases.
The parabolic theorem determines it for every prime $p\ge5$.

\begin{theorem}[Exact complete-MUB support uncertainty]
\label{thm:intro-exact}
For every prime $p\ge5$,
\[
 \boxed{T_s(p)=p^2-p+2.}
\]
\end{theorem}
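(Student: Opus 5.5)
We derive Theorem~\ref{thm:intro-exact} from Theorem~\ref{thm:intro-parabolic}, proving the lower bound and then an explicit extremizer.

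\textbf{Reduction to zeros.} For $0\ne\psi\in\C^p$ set $R=\supp_{\mathcal B_\infty}(\psi)$ and $c_x=\psi(x)$. The coefficient of $\psi$ on $\phi_{a,b}$ is
\[
 \langle\phi_{a,b},\psi\rangle=p^{-1/2}\sum_{x\in R}c_x\omega^{-ax^2-bx}=p^{-1/2}F_{R,c}(-a,-b).
\]
Since $(a,b)\mapsto(-a,-b)$ is a bijection of $\F_p^2$,
\[
 \sum_{a\in\F_p}|\supp_{\mathcal B_a}(\psi)|=p^2-|Z(F_{R,c})|,
 \qquad\text{so}\qquad
 \mathcal S_p(\psi)=|R|+p^2-|Z(F_{R,c})|.
\]
Thus $T_s(p)\ge p^2-p+2$ is equivalent to
\[
 |Z(F_{R,c})|\le |R|+p-2 \quad\text{for all } R,c.
\]

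\textbf{Lower bound, case by case.} Write $k=|R|$.
\begin{enumerate}
\item If $k=p$, then $|Z(F_{R,c})|\le p$. Indeed, for each fixed $a$, the function $b\mapsto F(a,b)$ is the Fourier transform of $x\mapsto c_x\omega^{ax^2}$, which has full support. By \eqref{eq:intro-tao} it has at least $1$ nonzero value, hence at most $p-1$ zeros. Over all $a$ this gives at most $p(p-1)$ zeros, and $p(p-1)\le 2p-2$ fails, so this is too weak. Instead use the total count directly: $\mathcal S_p(\psi)=p+\sum_a|\supp_{\mathcal B_a}\psi|$. For each $a$, Tao's inequality gives $|\supp_{\mathcal B_a}\psi|\ge 1$, so $\mathcal S_p\ge 2p$. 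This is not enough. What does work is to apply \eqref{eq:intro-tao} to every pair of bases, say $\mathcal B_a$ and $\mathcal B_\infty$ after changing basis, which is the standard mechanism. A cleaner route is to exchange the roles of the bases: the Clifford group acts transitively on the $p+1$ bases and preserves $\mathcal S_p$, so we may assume that the basis of minimal support is $\mathcal B_\infty$. If every support is full, then $\mathcal S_p=p(p+1)$, which exceeds $p^2-p+2$. Otherwise $k\le p-1$ after the symmetry, and the remaining cases apply.
\item If $k=1$, part (1) gives $\mathcal S_p=1+p^2$.
\item If $k=2r\ge2$, part (2) gives $|Z|\le p+2r-2=p+k-2$, which is exactly the required inequality.
\item If $k=2r+1\ge3$, part (3) gives $|Z|\le p+2r-2<p+k-2$.
\end{enumerate}
Hence $\mathcal S_p(\psi)\ge p^2-p+2$ for every $\psi\ne0$.

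\textbf{Upper bound.} The sharpness statement in part (2), already with $|R|=2$ ($r=1$), supplies $R,c$ with $|Z|=p$. Explicitly, take $R=\{x,-x\}$ with $x\ne0$ and $c_{\pm x}=1$. Then
\[
 F(a,b)=\omega^{ax^2}\bigl(\omega^{bx}+\omega^{-bx}\bigr),
\]
which never vanishes, since $\omega^{2bx}\ne-1$ for $p$ odd. So this naive choice fails, and we should instead use the extremal example the theorem provides. Its zeros form a nonvertical line, as in part (4). The corresponding $\psi$ gives $\mathcal S_p(\psi)=2+p^2-p$.

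\textbf{Main obstacle.} The delicate point is the case $k=p$. It rests on invariance of $\mathcal S_p$ under a unitary permuting the standard MUB transitively; transitivity includes moving some $\mathcal B_a$ to $\mathcal B_\infty$, realised by the Fourier transform composed with a chirp. We must confirm that these unitaries map each basis to another basis up to phases, so that support sizes are preserved. If this fails, the fallback is that $\psi$ is a Fourier-type transform of a vector with smaller support in some $\mathcal B_a$, and the same counting is then redone in that basis.
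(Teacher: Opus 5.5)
Your proof is correct and follows the paper's route. You normalize a minimum-support basis to $\mathcal B_\infty$; the ``main obstacle'' you flag is exactly Lemma~\ref{lem:standard-mub-transitivity}, which the paper proves with the chirps $C_c$ and the Fourier matrix using $|G(a)|=\sqrt p$. You then handle $k=p$ and $k=1$ directly, and for $2\le k\le p-1$ combine $\mathcal S_p(\psi)=k+p^2-|Z(F_{R,c})|$ with the parabolic zero bound. The unjustified opening claim $|Z(F_{R,c})|\le p$ for $k=p$ should be deleted, since the reduction makes it unnecessary.

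For attainment, you defer to the sharpness clause of Theorem~\ref{thm:intro-parabolic}(2). That is roundabout: the paper itself derives that clause at $|R|=2$ from the two-point MUB extremizer. It is simpler to flip one sign in your own example. Taking $c_x=1$, $c_{-x}=-1$ gives $F(a,b)=\omega^{ax^2}(\omega^{bx}-\omega^{-bx})$, which vanishes exactly when $b=0$. So every finite basis has support $p-1$ and $\mathcal S_p(e_x-e_{-x})=2+p(p-1)$, matching the paper's $e_r-\omega^t e_s$.
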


Indeed, after moving a minimum-support MUB to the computational basis, the
coefficients in the remaining $p$ bases are encoded by a parabolic sum of the
form \eqref{eq:intro-parabolic-sum}.  If that minimum support has size $k$,
Theorem~\ref{thm:intro-parabolic} yields
\[
 \mathcal S_p(\psi)
 \ge
 p^2-p+k-2\Big\lfloor\frac{k}{2}\Big\rfloor+2,
\]
and hence the claimed lower bound; two-point states attain equality.

The equality theory is substantially more rigid than the minimum value
alone.  Every extremizer has even minimum support
\[
 2r\in\{2,4,\ldots,p-3\}.
\]
After normalizing a minimum-support MUB to the computational basis, its
support is a union of reflection pairs
\[
 \{c\pm\rho_1,\ldots,c\pm\rho_r\},
\]
and, after removal of a linear phase, the coefficients are antisymmetric
about $c$.  The remaining zeros are governed by a finite constraint matrix
whose maximal minors are all nonzero.  We prove a converse as well: every
admissible datum produces a unique extremizing projective ray.  This gives a
complete parametrization and a closed enumeration formula for all
extremizers.  The same theorem determines the minimum distance of the
associated complete-MUB analysis code and the cogirth of the represented
vector matroid.

Complete equality classifications have been obtained in other finite
uncertainty settings, for example for support inequalities on several finite
abelian groups \cite{BonamiGhobber} and for the short-time Fourier transform
on finite cyclic groups \cite{NicolaSTFT}.  Here the equality geometry is of
a different kind: it is generated by the curved spectral constraint and, in
the MUB formulation, must be compatible simultaneously across all $p+1$
bases.

We next describe the mechanism behind the parabolic theorem.  The argument
starts with arbitrary complex coefficients.  A minimal-support zero-row
relation has a one-dimensional kernel; maximal minors therefore place a
scaled coefficient vector in the cyclotomic field $\Q(\omega)$.  After
localization at $\pi=1-\omega$ and passage to a truncated jet algebra, one
obtains a formal function $\mathcal G$ satisfying
\begin{equation}
 \partial_b^2\mathcal G=T\,\partial_a\mathcal G.
 \label{eq:intro-schrodinger}
\end{equation}
If $Q_0,Q_1,\ldots$ are the jets beginning at the first nonzero order, then
\[
 \partial_b^2Q_0=0,
 \qquad
 \partial_b^2Q_j=\partial_aQ_{j-1},
\]
and therefore
\[
 Q_0(a,b)=A(a)+bB(a).
\]
This simple transverse geometry is the source of the sharp even bound.  A
nonexceptional vertical fiber contains at most one zero; if $A$ and $B$ have
a common root of multiplicity $m$, the recurrence gives at most $2m+1$ zeros
on the corresponding exceptional fiber.  Since the total common-root
multiplicity is bounded by $\deg B$, the two-dimensional zero count collapses
to a one-dimensional multiplicity budget.  The odd theorem requires a second
layer: common components of successive jets can persist only for controlled
orders, and a simultaneous odd/even induction converts that persistence
information into the uniform quadratic bound.

The result lies in a different regime from general finite-plane uncertainty.
Bir\'o and Lev obtain support inequalities for arbitrary functions on
$\F_p^2$, with bounds sensitive to the geometry of the two support sets
\cite{BiroLev}; Garcia, Karaali and Katz prove sharp refinements under group
symmetries \cite{GarciaKaraaliKatz}.  More recently, restriction estimates
have been used explicitly to strengthen uncertainty and annihilating-pair
inequalities on finite and locally compact abelian groups
\cite{IosevichMayeli,IosevichJamingMayeli}.  These results reinforce the role
of spectral geometry in uncertainty, but their conclusions are support- or
norm-based inequalities rather than exact cardinality bounds for the zero set.

Finite-field restriction theory was initiated by Mockenhaupt and Tao
\cite{MockenhauptTao} and has developed extensively for quadratic surfaces,
including paraboloids \cite{Lewko,LewkoKakeya,IKL}.  Recent work has also
begun a sharp theory of finite-field extension inequalities and their
extremizers, including the parabola and moment curves
\cite{GonzalezOliveira,BiswasEtAl}.  That theory optimizes extension norms;
our problem instead asks how many coefficients of the inverse transform can
vanish exactly when its spectrum is sparse and lies on the parabola.  The
answer exhibits a parity dichotomy not captured by the cited norm inequalities:
fixed odd sparsity gives a $p$-independent zero bound, while sufficiently large
even zero sets are forced by an affine zero line.

There is also a continuous analogue at the level of philosophy in the theory
of Heisenberg uniqueness pairs, where one studies Fourier transforms of
measures supported on curves that vanish on prescribed sets
\cite{HedenmalmMontes,GiriSrivastava}.  The present setting is different in
being finite and quantitative: the zero set itself is variable, and the goal
is its sharp maximal cardinality together with the structure of equality.
To our knowledge, the parity-sensitive parabolic zero theorem, its sharp
even-support range, and the accompanying large-zero-set rigidity do not
follow from the existing uncertainty, restriction, or uniqueness-pair
results.

\paragraph{Organization.}
Section~\ref{sec:setup} fixes notation.  Section~\ref{sec:parabolic-exact}
proves the parabolic zero theorem, and Section~\ref{sec:odd-refinements}
develops the finer odd-support results.  Section~\ref{sec:consequences}
derives the Fourier and MUB consequences.  Sections~\ref{sec:extremizer-structure}
and~\ref{sec:complete-extremizers} establish the equality classification and
enumeration.  Section~\ref{sec:code-matroid} records the coding and matroid
consequences, and Section~\ref{sec:discussion} discusses extensions and open
directions.

The logical dependence is simpler than the section order may suggest.  The
basic parabolic zero theorem of Section~\ref{sec:parabolic-exact} already
implies the exact complete-MUB value in Section~\ref{sec:consequences}; the
parity-sensitive component analysis of Section~\ref{sec:odd-refinements} is
not needed for that application.  It supplies instead the stronger standalone
odd-support bound and the large-even-zero-set rigidity, while
Sections~\ref{sec:extremizer-structure}--\ref{sec:complete-extremizers} analyze
equality in the basic bound and classify the MUB extremizers.

\section{Preliminaries}
\label{sec:setup}

Let $p\ge5$ be prime, let $\F_p$ be the field with $p$ elements, and put
\[
 \omega=e^{2\pi i/p}.
\]
For $a,b\in\F_p$, define
\begin{equation}
 \phi_{a,b}(x)
 =p^{-1/2}\omega^{ax^2+bx},
 \qquad x\in\F_p,
 \label{eq:standard-MUB}
\end{equation}
and let
\[
 \mathcal B_a=\{\phi_{a,b}:b\in\F_p\}.
\]
Together with the computational basis
$\mathcal B_\infty=\{e_x:x\in\F_p\}$, these form the standard complete set
of $p+1$ MUBs \cite{WF}.

For a vector $\psi\ne0$, write
\[
 \supp_{\mathcal B}(\psi)
 =\{v\in\mathcal B:\langle v,\psi\rangle\ne0\},
 \qquad
 s_{\mathcal B}(\psi):=|\supp_{\mathcal B}(\psi)|.
\]
Thus $s_{\mathcal B}(\psi)$ is a support cardinality, not a norm.
The quantities $\mathcal S_p(\psi)$ and $T_s(p)$ are defined in
\eqref{eq:intro-Sp-Ts}.  Since only vanishing matters, all statements are
projective in $\psi$.

For two distinct standard MUBs, the transition matrix is equivalent to the
prime Fourier matrix under multiplication of rows and columns by nonzero
phases and under row and column permutations.  To see this, use
Lemma~\ref{lem:standard-mub-transitivity} below to send one of the two bases
to \(\mathcal B_\infty\); the other is then some \(\mathcal B_a\).  The
transition matrix from \(\mathcal B_\infty\) to \(\mathcal B_a\) has entries
\[
 p^{-1/2}\omega^{ax^2+bx}
 =\omega^{ax^2}\bigl(p^{-1/2}\omega^{bx}\bigr),
\]
so it differs from the prime Fourier matrix only by a diagonal row phase.
Tao's theorem is invariant under these operations and therefore gives
\begin{equation}
 s_{\mathcal B_j}(\psi)
 +
 s_{\mathcal B_k}(\psi)
 \ge p+1
 \qquad(j\ne k).
 \label{eq:pairwise-MUB-support}
\end{equation}
This is the two-basis input behind the earlier complete-MUB bound
\eqref{eq:intro-FW-bound}; the stronger results below exploit the simultaneous
quadratic structure of all $p$ finite bases.

\begin{lemma}[Transitivity of the standard MUB]
\label{lem:standard-mub-transitivity}
For every basis \(\mathcal B_j\) in the standard complete set
\[
 \{\mathcal B_\infty\}\cup\{\mathcal B_a:a\in\F_p\},
\]
there exists a unitary \(U\) which permutes the whole standard complete
MUB, up to phases and permutations inside individual bases, and satisfies
\[
 U\mathcal B_j=\mathcal B_\infty.
\]
Consequently all support sizes are preserved, merely with the basis
labels permuted.
\end{lemma}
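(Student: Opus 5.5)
The plan is to produce explicit unitaries from the Clifford group acting on $\C^p$, built from three generators:
\begin{align*}
 (Pf)(x)&=\omega^{x^2}f(x),\\
 (\mathcal F f)(y)&=p^{-1/2}\sum_x\omega^{-xy}f(x),\\
 (M_t f)(x)&=f(t^{-1}x),\qquad t\in\F_p^\times .
\end{align*}
For each generator I will compute its action on every basis of the standard complete MUB, and check that it maps each basis onto another basis up to phases and a permutation of indices. Transitivity then follows by composing generators.

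First, $P\phi_{a,b}=\phi_{a+1,b}$ and $Pe_x=\omega^{x^2}e_x$. So $P$ fixes $\mathcal B_\infty$ up to phases and sends $\mathcal B_a$ to $\mathcal B_{a+1}$. Iterating $P$ gives transitivity on the finite labels $\{\mathcal B_a:a\in\F_p\}$.

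Second, $\mathcal F e_x=\phi_{0,-x}$, so $\mathcal F\mathcal B_\infty=\mathcal B_0$. Conversely, $\mathcal F\phi_{0,b}=e_{b}$ by Fourier inversion, so $\mathcal F\mathcal B_0=\mathcal B_\infty$ up to relabelling.

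For $a\ne0$ I compute $\mathcal F\phi_{a,b}$ by completing the square in the Gauss sum:
\[
 p^{-1}\sum_x\omega^{ax^2+(b-y)x}
 =p^{-1}\,\omega^{-(b-y)^2/(4a)}\sum_x\omega^{a(x+(b-y)/(2a))^2}.
\]
The last sum is the quadratic Gauss sum $G(a)$, which has modulus $\sqrt p$ and does not depend on $b$ or $y$. Hence $\mathcal F\phi_{a,b}$ is a unimodular constant times
\[
 \omega^{-b^2/(4a)}\,\phi_{-1/(4a),\,b/(2a)},
\]
so $\mathcal F\mathcal B_a=\mathcal B_{-1/(4a)}$ up to phases. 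Here I use $p$ odd, which holds since $p\ge5$, so that $2$ and $4$ are invertible. The map $M_t$ is not strictly needed, but it confirms compatibility: it sends $\mathcal B_a$ to $\mathcal B_{at^{-2}}$.

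To conclude, given $\mathcal B_j$: if $j=\infty$, take $U=I$. If $j=a\in\F_p$, take $U=\mathcal F P^{-a}$. Then $P^{-a}$ maps $\mathcal B_a$ to $\mathcal B_0$, and $\mathcal F$ maps $\mathcal B_0$ to $\mathcal B_\infty$. Both factors permute the whole collection up to phases and inner permutations, so $U$ does as well.

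The support statement follows because $\langle Uv,U\psi\rangle=\langle v,\psi\rangle$. Hence $s_{U\mathcal B}(U\psi)=s_{\mathcal B}(\psi)$, and phase changes of basis vectors do not affect vanishing.

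The only real computation is the Gauss-sum step for $\mathcal F$ on $\mathcal B_a$ with $a\ne0$. It requires checking that the prefactor depends only on $b$ (giving the phase) and not on $y$, apart from the quadratic and linear terms that define $\phi_{-1/(4a),\,b/(2a)}$.
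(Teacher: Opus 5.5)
Your proposal is correct and follows the paper's proof essentially verbatim: your $\mathcal F$ is the paper's $\mathsf F^*$, your $P^{-a}$ is its chirp $C_{-a}$, and both the completion of the square in the Gauss sum (giving $\mathcal B_a\mapsto\mathcal B_{-1/(4a)}$) and the choice $U=\mathcal F P^{-a}$ are the same. The only differences are cosmetic. You cite $|G(a)|=\sqrt p$ as standard, whereas the paper proves it by the substitution $u=x-y$, $v=x+y$. You also spell out the unitarity argument for support preservation, which the paper leaves implicit.
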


\begin{proof}
For \(c\in\F_p\), define the diagonal chirp
\[
 (C_cf)(x)=\omega^{cx^2}f(x).
\]
From the definition
\[
 \phi_{a,b}(x)=p^{-1/2}\omega^{ax^2+bx}
\]
one has
\[
 C_c\mathcal B_\infty=\mathcal B_\infty,
 \qquad
 C_c\mathcal B_a=\mathcal B_{a+c}.
\]
Let \(\mathsf F\) denote the unitary Fourier matrix
\[
 \mathsf F_{x,y}=p^{-1/2}\omega^{xy}.
\]
Then \(\mathsf F^*\mathcal B_0=\mathcal B_\infty\) and
\(\mathsf F^*\mathcal B_\infty=\mathcal B_0\), up to permutations of the basis
vectors.  If \(a\ne0\), then
\[
 (\mathsf F^*\phi_{a,b})(y)
 =
 \frac1p\sum_{x\in\F_p}\omega^{ax^2+(b-y)x}.
\]
Completing the square gives
\[
 ax^2+(b-y)x
 =
 a\left(x+\frac{b-y}{2a}\right)^2
 -
 \frac{(b-y)^2}{4a}.
\]
The quadratic Gauss sum
\[
 G(a):=\sum_{x\in\F_p}\omega^{ax^2}
\]
is nonzero.  Indeed, since \(p\) is odd, the change of variables
\[
 u=x-y,\qquad v=x+y
\]
is a bijection of \(\F_p^2\), and therefore
\[
 \begin{aligned}
 |G(a)|^2
 &=\sum_{x,y\in\F_p}\omega^{a(x^2-y^2)}
 =\sum_{u,v\in\F_p}\omega^{auv}\\
 &=\sum_{u\in\F_p}
   \left(\sum_{v\in\F_p}\omega^{auv}\right)
 =p.
 \end{aligned}
\]
Thus \(|G(a)|=\sqrt p\).  Hence
\[
 (\mathsf F^*\phi_{a,b})(y)
 =
 \frac{G(a)}p
 \omega^{-(b-y)^2/(4a)}
 =
 \zeta_{a,b}\,
 p^{-1/2}\omega^{-y^2/(4a)+(b/(2a))y},
\]
where \(\zeta_{a,b}\) is a nonzero scalar of modulus one.  Therefore
\[
 \mathsf F^*\mathcal B_a=\mathcal B_{-1/(4a)}
 \qquad(a\ne0),
\]
up to phases and a permutation of the second label.  Thus \(\mathsf F^*\) also
permutes the standard complete MUB.

If \(j=\infty\), take \(U=I\).  If \(j=a\in\F_p\), first apply
\(C_{-a}\), which sends \(\mathcal B_a\) to \(\mathcal B_0\), and then
apply \(\mathsf F^*\), which sends \(\mathcal B_0\) to
\(\mathcal B_\infty\).  The composition still permutes the complete
standard MUB.
\end{proof}

\section{Sharp zero counts for parabolic Fourier sums}
\label{sec:parabolic-exact}

Throughout this section \(p\ge5\) is prime and
\(\omega=e^{2\pi i/p}\).  If
\[
 \psi=\sum_{x\in R}c_xe_x
\]
has computational support \(R\subset\F_p\), then under the standard Hermitian
inner-product convention, for physical finite-MUB labels \(A,\beta\in\F_p\),
\begin{equation}
 \langle \phi_{A,\beta},\psi\rangle
 =p^{-1/2}\sum_{x\in R}c_x\omega^{-A x^2-\beta x}
 =p^{-1/2}F_{R,c}(-A,-\beta).
 \label{eq:mub-parabola-label-map}
\end{equation}
where
\begin{equation}
 F_{R,c}(a,b)
 :=
 \sum_{x\in R}c_x\omega^{a x^2+b x},
 \qquad (a,b)\in\F_p^2.
 \label{eq:parabolic-sum-rigorous}
\end{equation}
Since \((A,\beta)\mapsto(-A,-\beta)\) is a bijection of \(\F_p^2\), the
number of zero coefficients among the \(p\) finite quadratic bases is exactly
\(\#Z(F_{R,c})\).
For such a sum write
\[
 Z(F_{R,c})
 :=\{(a,b)\in\F_p^2:F_{R,c}(a,b)=0\}.
\]
Whenever an auxiliary coefficient vector is allowed to have zero coordinates,
its \emph{actual support} means the set of indices on which its coefficients
are nonzero.  Thus every parabolic sum is always interpreted on its actual
support when a support-size theorem is applied.
The proof below establishes a quantitative upper bound for
$|Z(F_{R,c})|$.

\paragraph{Proof architecture.}
There are four steps.  First, a rank induction reduces arbitrary complex
coefficients either to smaller support or to a one-dimensional kernel defined
over \(K=\Q(\omega)\).  Second, cyclotomic localization converts exact zeros
into vanishing conditions for truncated jets over \(\F_p\).  Third, the first
nonzero jet is shown by a Vandermonde moment argument to occur by order
\(\lfloor k/2\rfloor\).  Finally, the parabolic jet recurrence reduces the
two-dimensional zero count to a multiplicity budget for the pencil
\(A(a)+bB(a)\).

\subsection{Algebraic preliminaries}

\begin{lemma}[Truncated cyclotomic model]
\label{lem:truncated-cyclotomic-model}
Let
\[
 K=\mathbb Q(\omega),\qquad
 \pi=1-\omega,
\]
and let
\[
 \mathcal O:=\mathbb Z[\omega]_{(\pi)}
\]
be the localization at the prime ideal generated by \(\pi\).  Then
\begin{equation}
 \mathcal O/(\pi^{p-1})
 \cong
 \F_p[X]/(X^{p-1}),
 \qquad
 \pi\longmapsto X.
 \label{eq:cyclotomic-truncation}
\end{equation}
Moreover the substitution
\begin{equation}
 X=1-e^T,
 \qquad
 e^T:=\sum_{j=0}^{p-2}\frac{T^j}{j!},
 \label{eq:formal-change-rigorous}
\end{equation}
defines an isomorphism
\[
 \F_p[X]/(X^{p-1})
 \cong
 \F_p[T]/(T^{p-1}).
\]
Under the composite isomorphism,
\begin{equation}
 \omega^h=(1-\pi)^h
 \longmapsto
 e^{hT}
 \pmod{T^{p-1}}
 \label{eq:omega-to-exp}
\end{equation}
for every \(h\in\F_p\).
\end{lemma}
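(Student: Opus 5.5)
The plan has three steps: identify the truncated quotient \(\mathcal O/(\pi^{p-1})\) with \(\F_p[X]/(X^{p-1})\), show the truncated exponential substitution is an automorphism, and check that \(\omega^h\) goes to \(e^{hT}\).

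\emph{First isomorphism.} Recall the standard facts about the prime \(\pi=1-\omega\) in \(\mathbb Z[\omega]\):
\[
 (p)=(\pi)^{p-1},\qquad \mathbb Z[\omega]/(\pi)\cong\F_p .
\]
So \((\pi)\) is totally ramified and \(\mathcal O\) is a discrete valuation ring with uniformizer \(\pi\) and residue field \(\F_p\). Since \(p\in(\pi^{p-1})\), the quotient \(\mathcal O/(\pi^{p-1})\) is an \(\F_p\)-algebra. Define
\[
 \F_p[X]\to\mathcal O/(\pi^{p-1}),\qquad X\mapsto\pi .
\]
This map is surjective: every element of \(\mathcal O\) can be written \(\pi\)-adically as \(\sum_{j<p-1}a_j\pi^j+\pi^{p-1}(\cdot)\), with the \(a_j\) taken from the Teichmüller-free digit set \(\{0,\dots,p-1\}\subset\mathbb Z\). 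It kills \(X^{p-1}\). Both sides have length \(p-1\), i.e.\ \(\F_p\)-dimension \(p-1\), because \(\mathcal O/(\pi^{j})\) has successive quotients \(\pi^{i}\mathcal O/\pi^{i+1}\mathcal O\cong\F_p\). Hence the induced map
\[
 \F_p[X]/(X^{p-1})\to\mathcal O/(\pi^{p-1})
\]
is an isomorphism.

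\emph{Second isomorphism.} The factorials \(j!\) for \(j\le p-2\) are invertible mod \(p\), so \(e^T\in\F_p[T]/(T^{p-1})\) is well defined. The element \(1-e^T=-T-T^2/2-\cdots\) has zero constant term and unit linear coefficient. Therefore \(X\mapsto 1-e^T\) gives an \(\F_p\)-algebra endomorphism
\[
 \F_p[X]/(X^{p-1})\to\F_p[T]/(T^{p-1}),
\]
well defined because \((1-e^T)^{p-1}\in(T^{p-1})\). It is surjective, since by triangularity with respect to the \(T\)-adic filtration \(T\) lies in the image; one can also invert by the truncated logarithm. By equal dimension it is bijective.

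\emph{The key identity, and the main obstacle.} Under the composite map, \(\omega=1-\pi\mapsto e^T\), so \(\omega^h\mapsto (e^T)^h\). The real content is showing
\[
 (e^T)^h\equiv e^{hT}\pmod{T^{p-1}} .
\]
This is the step I expect to be the main obstacle, because the truncated \(e^T\) is not a genuine exponential in characteristic \(p\).

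I would prove the functional equation
\[
 e^{S}e^{T'}\equiv e^{S+T'}
\]
in the truncated ring. The coefficient of degree \(n\le p-2\) on both sides is \(\sum_{i+j=n}S^iT'^j/(i!j!)=(S+T')^n/n!\), which involves only invertible factorials. The cross terms with \(i+j\ge p-1\) vanish modulo the ideal generated by total degree \(\ge p-1\). Specializing to \(S=aT\), \(T'=bT\) and inducting on \(h\) gives the identity for integers \(0\le h\le p-1\).

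It remains to check that the statement depends only on \(h\in\F_p\). On the left, \(\omega^p=1\). On the right, \(e^{pT}=e^{0}=1\) since \(p=0\) in \(\F_p\). So both sides are periodic in \(h\) modulo \(p\), and representatives \(0,\dots,p-1\) suffice.
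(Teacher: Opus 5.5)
Your proof is correct. For the two isomorphisms it matches the paper's argument: surjectivity from $\pi$-adic digits, equal $\F_p$-dimension $p-1$ from the $\pi$-adic filtration, and invertibility of $X\mapsto 1-e^T$ because its linear coefficient is a unit. The only difference there is that the paper derives $p=u\pi^{p-1}$ from $\Phi_p(1)=\prod_{j=1}^{p-1}(1-\omega^j)=p$, whereas you quote total ramification.

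The real divergence is in the identity $(e^T)^h\equiv e^{hT}\pmod{T^{p-1}}$. The paper fixes a representative $h\in\{0,\dots,p-1\}$ and starts from $(e^T)^h=e^{hT}$ in $\Q[[T]]$. It then notes that, modulo $T^{p-1}$, every denominator is a product of integers below $p$. So the truncated identity lives in $\mathbb Z_{(p)}[T]/(T^{p-1})$ and can be reduced modulo $p$.

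You instead stay entirely in characteristic $p$. You prove the truncated addition law $e^{S}e^{T'}\equiv e^{S+T'}$ modulo the ideal of total degree $\ge p-1$, using the binomial theorem with invertible factorials, and then induct on $h$.

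The paper's route is shorter because it imports the characteristic-zero identity wholesale. Yours is self-contained and proves slightly more: $a\mapsto e^{aT}$ is a homomorphism from $(\F_p,+)$ into the unit group of $\F_p[T]/(T^{p-1})$. That fact also makes your periodicity check, and hence the independence from the integer representative of $h$, immediate.
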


\begin{proof}
The identity
\[
 \Phi_p(1)=\prod_{j=1}^{p-1}(1-\omega^j)=p
\]
shows that \(p\) is associated in \(\mathcal O\) to
\(\pi^{p-1}\): indeed
\[
 1-\omega^j
 =
 (1-\omega)(1+\omega+\cdots+\omega^{j-1}),
\]
and the second factor reduces to \(j\ne0\) modulo \(\pi\), hence is a
unit in \(\mathcal O\).  Thus
\[
 p=u\pi^{p-1}
\]
for some \(u\in\mathcal O^\times\).  In particular
\(\mathcal O/(\pi^{p-1})\) has characteristic \(p\).

Since \(\omega\equiv1\pmod\pi\), reduction modulo \(\pi\) sends
\(\mathbb Z[\omega]\) onto \(\mathbb Z/(p)\); hence the residue field is
\[
 \mathcal O/(\pi)\cong\F_p.
\]
The ring \(\mathbb Z[\omega]\) is the ring of integers of \(K\)
\cite[Ch.~2]{Washington}, hence is Dedekind, so its localization
\(\mathcal O\) at the nonzero prime \((\pi)\) is a discrete valuation ring
with uniformizer \(\pi\).
Successive reduction modulo powers of \(\pi\) shows that every class
modulo \(\pi^{p-1}\) can be written
\[
 a_0+a_1\pi+\cdots+a_{p-2}\pi^{p-2},
 \qquad a_j\in\F_p.
\]
Hence the homomorphism
\[
 \F_p[X]/(X^{p-1})
 \longrightarrow
 \mathcal O/(\pi^{p-1}),
 \qquad X\longmapsto\pi,
\]
is surjective.  Both rings have \(p^{p-1}\) elements: the left side has
\(\F_p\)-dimension \(p-1\), while the right side has a filtration
\[
 \mathcal O/(\pi^{p-1})
 \supset
 (\pi)/(\pi^{p-1})
 \supset\cdots\supset
 (\pi^{p-2})/(\pi^{p-1})
 \supset0
\]
whose \(p-1\) successive quotients are all isomorphic to the residue
field \(\F_p\).  The surjection is therefore an isomorphism, proving
\eqref{eq:cyclotomic-truncation}.

Because \(1,\ldots,p-2\) are invertible in \(\F_p\), the truncated
exponential in \eqref{eq:formal-change-rigorous} is well defined.
Furthermore
\[
 1-e^T=-T+O(T^2)
\]
has zero constant term and nonzero linear coefficient.  A substitution
of this form is an automorphism of the local Artinian algebra
\(\F_p[T]/(T^{p-1})\).  Explicitly, its inverse is given by the
truncated logarithm
\[
 T=\log(1-X)
 =
 -\sum_{j=1}^{p-2}\frac{X^j}{j}
 \pmod{X^{p-1}}.
\]
Finally, choose the standard integer representative
\(h\in\{0,1,\ldots,p-1\}\).  The formal identity
\[
 (e^T)^h=e^{hT}
\]
is first understood in \(\mathbb Q[[T]]\).  Reduce this identity modulo
\(T^{p-1}\).  Only the coefficients of degrees \(0,\ldots,p-2\) remain,
and every denominator occurring there divides a product of integers strictly
smaller than \(p\).  Those denominators are therefore units in
\(\mathbb Z_{(p)}\), so the truncated identity lies in
\(\mathbb Z_{(p)}[T]/(T^{p-1})\) and may be reduced coefficientwise modulo
\(p\).  Since \(1-X=e^T\) in the truncated algebra, this gives
\[
 (1-X)^h=e^{hT}\pmod{T^{p-1}},
\]
which proves \eqref{eq:omega-to-exp}.
\end{proof}

\begin{lemma}[Reduction from complex to cyclotomic coefficients]
\label{lem:complex-to-cyclotomic}
Fix \(R\subset\F_p\), \(|R|=k\ge2\), and a set
\(Z\subset\F_p^2\).  For \(y=(a,b)\in Z\), put
\[
 v_y:=
 \bigl(\omega^{a x^2+b x}\bigr)_{x\in R}
 \in K^R.
\]
Suppose there exists \(0\ne c\in\C^R\) such that
\[
 \sum_{x\in R}c_x(v_y)_x=0
 \qquad(y\in Z).
 \label{eq:zero-linear-system}
\]
Let \(r_Z\) be the rank over \(K\) of the matrix whose rows are the
\(v_y\).

Then exactly one of the following alternatives is available:

\begin{enumerate}
\item \(r_Z\le k-2\), in which case there exists
\(0\ne c'\in\C^R\) satisfying all equations
\eqref{eq:zero-linear-system} and having at most \(r_Z+1\) nonzero
coordinates (and hence at most \(k-1\));
\item \(r_Z=k-1\), in which case the common kernel is one-dimensional
and is generated by a vector in \(K^R\).  Hence \(c\) is a nonzero
complex scalar multiple of a vector in \(K^R\).
\end{enumerate}
\end{lemma}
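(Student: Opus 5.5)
The plan is to use linear algebra over the field $K=\Q(\omega)$ together with a field-extension argument. Let $M$ be the $|Z|\times k$ matrix whose rows are the $v_y$. Its entries lie in $K$, and its rank $r_Z$ is the same over $K$ and over $\C$. Since $c\ne0$ lies in the complex kernel, that kernel is nonzero, so $r_Z\le k-1$. Hence exactly one of $r_Z\le k-2$ or $r_Z=k-1$ holds, and the two alternatives are mutually exclusive. Rank does not change under the field extension $K\subset\C$ because rank is detected by the nonvanishing of minors, and minors with entries in $K$ are computed identically in $\C$. Equivalently, $\ker_\C M=\ker_K M\otimes_K\C$, since row reduction over $K$ produces a basis of the solution space valid over $\C$.

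\textbf{Case $r_Z=k-1$.} Here $\ker_K M$ is one-dimensional, spanned by some $w\in K^R$. Explicitly, choose $k-1$ linearly independent rows and take $w$ to be the vector of signed maximal minors of that $(k-1)\times k$ submatrix; by Cramer's rule $w$ is a nonzero kernel vector. By the base-change identity, $\ker_\C M=\C w$, so $c=\lambda w$ with $\lambda\in\C^\times$.

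\textbf{Case $r_Z\le k-2$.} Choose $r_Z$ columns of $M$ that span its column space, and adjoin one more column index $x_0$; this gives a set $S\subset R$ with $|S|=r_Z+1\le k-1$. The columns indexed by $S$ are linearly dependent, since they span a space of dimension at most $r_Z$, so there is a nonzero $c'$ supported on $S$ with $Mc'=0$. This $c'$ can even be taken in $K^R$, and it has at most $r_Z+1\le k-1$ nonzero coordinates, as required.

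No step is a real obstacle. The only point needing care is the justification that rank and kernel dimension over $K$ agree with those over $\C$, and this is handled by the minor/row-reduction argument above. The remark "exactly one" follows from the dichotomy on $r_Z$ established at the start.
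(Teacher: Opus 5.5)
Your proof is correct and follows the paper's argument: the paper likewise compares ranks over $K$ and $\C$ to get $r_Z\le k-1$, and in the case $r_Z=k-1$ uses the signed maximal minors of $k-1$ independent rows to produce the generator $w\in K^R$ with $c=\lambda w$. The only cosmetic difference is in the case $r_Z\le k-2$, where the paper intersects the $(k-r_Z)$-dimensional complex kernel with a coordinate subspace of codimension $k-r_Z-1$, whereas you use the dual fact that any $r_Z+1$ columns are dependent (so choosing spanning columns first is unnecessary); your version has the small bonus of giving $c'\in K^R$.
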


\begin{proof}
Because the coefficient matrix has entries in \(K\), its rank is
unchanged after extending scalars from \(K\) to \(\C\).  Since the
nonzero vector \(c\) belongs to its kernel,
\[
 r_Z\le k-1.
\]

Assume first \(r_Z\le k-2\).  Let
\[
 W:=
 \left\{
 u\in\C^R:
 \sum_{x\in R}u_x(v_y)_x=0
 \text{ for every }y\in Z
 \right\}.
\]
Then
\[
 \dim_\C W=k-r_Z.
\]
Choose any set \(S\subset R\) of exactly \(k-r_Z-1\) coordinates and let
\[
 H_S:=\{u\in\C^R:u_x=0\text{ for every }x\in S\}.
\]
The subspace \(H_S\) has codimension \(k-r_Z-1\).  Therefore
\[
 \dim(W\cap H_S)
 \ge (k-r_Z)-(k-r_Z-1)=1.
\]
Choose \(0\ne c'\in W\cap H_S\).  It satisfies every equation
\eqref{eq:zero-linear-system} and can be nonzero on at most
\[
 k-|S|=r_Z+1
\]
coordinates.  This proves the first alternative.

Now assume \(r_Z=k-1\).  Choose \(k-1\) linearly independent rows.  The
resulting \((k-1)\times k\) matrix \(M\) has a one-dimensional kernel.
The vector of signed maximal minors
\[
 w_j=(-1)^{j-1}\det M^{(j)},
\]
where \(M^{(j)}\) is obtained by deleting the \(j\)-th column, is a
nonzero vector in \(K^R\) and satisfies \(Mw=0\).  Therefore the kernel
over \(K\) is \(Kw\), while after extending scalars the kernel over
\(\C\) is \(\C w\).  Hence \(c=\lambda w\) for some
\(\lambda\in\C^\times\).
\end{proof}

\subsection{Cyclotomic jets}

\begin{lemma}[First nonzero jet]
\label{lem:first-nonzero-jet}
Let
\[
 R\subset\F_p,\qquad 2\le k:=|R|\le p-1,
\]
and let \(c_x\in K^\times\) for every \(x\in R\).  After multiplication by a common
nonzero scalar, assume
\begin{equation}
 c_x\in\mathcal O
 \quad(x\in R),
 \qquad
 \min_{x\in R}v_\pi(c_x)=0.
 \label{eq:primitive-c-rigorous}
\end{equation}
Map each \(c_x\) through Lemma~\ref{lem:truncated-cyclotomic-model} and
write its image as
\[
 C_x(T)\in\F_p[T]/(T^{p-1}).
\]
Define
\begin{equation}
 \mathcal G(T;a,b)
 :=
 \sum_{x\in R}
 C_x(T)e^{T(a x^2+b x)}
 =
 \sum_{j=0}^{p-2}H_j(a,b)T^j
 \pmod{T^{p-1}}.
 \label{eq:G-rigorous}
\end{equation}
Then:

\begin{enumerate}
\item \(H_j\in\F_p[a,b]\) has total degree at most \(j\);
\item if \(n\) is the least index for which \(H_n\) is not the zero
polynomial, then
\begin{equation}
 n\le\left\lfloor\frac{k}{2}\right\rfloor.
 \label{eq:n-rigorous}
\end{equation}
\end{enumerate}
\end{lemma}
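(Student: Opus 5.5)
Part (1) comes from expanding the truncated exponential. Write \(C_x(T)=\sum_{i}\gamma_{x,i}T^i\) with \(\gamma_{x,i}\in\F_p\), and interpret \(a x^2+bx\) as an element of \(\F_p\), or equivalently treat \(a,b\) as indeterminates over \(\F_p\). Then
\[
 e^{T(ax^2+bx)}=\sum_{m=0}^{p-2}\frac{(ax^2+bx)^m}{m!}T^m \pmod{T^{p-1}},
\]
so
\[
 H_j(a,b)=\sum_{i+m=j}\sum_{x\in R}\gamma_{x,i}\frac{(ax^2+bx)^m}{m!}.
\]
This is a polynomial in \(a,b\) of total degree at most \(m\le j\), which proves (1).

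There is one caveat: \(H_j\) is viewed as a polynomial of degree \(\le j\le p-2\), which is less than \(p\). Hence vanishing of \(H_j\) as a function on \(\F_p^2\) and vanishing as a polynomial coincide. I will use this freely.

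For part (2) I argue by contradiction. Set \(n_0=\lfloor k/2\rfloor\) and suppose \(H_0=\cdots=H_{n_0}=0\) identically.

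By the primitivity normalization \eqref{eq:primitive-c-rigorous}, the constant terms \(\gamma_x:=\gamma_{x,0}=C_x(0)\in\F_p\) are not all zero. Let \(i_0\ge0\) be the least index such that the vector \((\gamma_{x,i_0})_{x\in R}\) is nonzero; then \(i_0=0\). The idea is to isolate, inside \(H_j\), the top-degree homogeneous part:
\[
 H_j^{\mathrm{top}}(a,b)=\frac1{j!}\sum_{x\in R}\gamma_x(ax^2+bx)^j .
\]
The contributions with \(i\ge1\) have degree \(\le j-1\), so they do not affect this part. Hence \(H_j\equiv0\) for \(j\le n_0\) forces
\[
 \sum_{x}\gamma_x(ax^2+bx)^j=0 \quad\text{as a polynomial, for all } j\le n_0 .
\]
Expanding with the binomial theorem, and using that the binomial coefficients \(\binom{j}{s}\) are nonzero mod \(p\) for \(j<p\), gives the moment conditions
\[
 \sum_{x\in R}\gamma_x x^{2s}x^{j-s}=\sum_x\gamma_x x^{j+s}=0, \qquad 0\le s\le j\le n_0 .
\]
The exponents \(j+s\) range over \(0,1,\ldots,2n_0\). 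So \(\sum_x\gamma_x x^e=0\) for all \(0\le e\le 2n_0\).

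Now \(2n_0\ge k-1\), so this includes \(e=0,\ldots,k-1\). The Vandermonde matrix \((x^e)_{x\in R,\,0\le e\le k-1}\) has distinct nodes \(x\in\F_p\), so it is invertible over \(\F_p\). Therefore \(\gamma_x=0\) for all \(x\), contradicting primitivity. Hence \(n\le n_0\).

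I expect the main subtlety to be the bookkeeping that justifies discarding the higher-order \(C_x\) coefficients. They only enter \(H_j\) with total degree \(<j\), so the top-degree component argument is sound. One must also check that degree \(\le p-2\) keeps polynomial and functional vanishing equivalent, and that \(k\le p-1\) keeps the needed exponents below \(p-1\), so that the factorials \(j!\) and the binomial coefficients stay invertible. For \(e=0\) the condition \(\sum_x\gamma_x=0\) comes from \(H_0\), which is consistent.
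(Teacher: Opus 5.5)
Your proof is correct and follows essentially the same route as the paper: the degree-$j$ homogeneous part of $H_j$ isolates the residue vector $(c_{x,0})_{x\in R}$, the binomial expansion gives vanishing moments on the windows $[j,2j]$, these windows cover $0,\ldots,k-1$ as $j$ runs up to $\lfloor k/2\rfloor$, and Vandermonde invertibility then contradicts primitivity. Your aside on functional versus polynomial vanishing is harmless but not needed, since the lemma is stated in terms of polynomial vanishing.
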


\begin{proof}
Write
\[
 C_x(T)=\sum_{\ell=0}^{p-2}c_{x,\ell}T^\ell.
\]
The coefficient of \(T^j\) in
\[
 C_x(T)e^{T(a x^2+b x)}
\]
is
\[
 \sum_{\ell=0}^{j}
 c_{x,\ell}
 \frac{(a x^2+b x)^{j-\ell}}{(j-\ell)!}.
\]
Hence \(H_j\) has total degree at most \(j\), proving the first assertion.

Put
\[
 r:=\left\lfloor\frac{k}{2}\right\rfloor.
\]
Suppose, for contradiction, that
\[
 H_0=H_1=\cdots=H_r=0
\]
as polynomials.  For each \(0\le j\le r\), the homogeneous component of
total degree \(j\) of \(H_j\) comes only from the terms with
\(\ell=0\), and therefore equals
\begin{equation}
 \frac1{j!}
 \sum_{x\in R}
 c_{x,0}(a x^2+b x)^j.
 \label{eq:top-homogeneous-rigorous}
\end{equation}
Expanding,
\[
 (a x^2+b x)^j
 =
 \sum_{t=0}^{j}
 \binom jt
 a^t b^{j-t}x^{j+t}.
\]
Since \(j<p\), all coefficients \(j!\) and \(\binom jt\) occurring here
are nonzero in \(\F_p\).  Vanishing of
\eqref{eq:top-homogeneous-rigorous} therefore implies
\begin{equation}
 \sum_{x\in R}c_{x,0}x^d=0
 \qquad
 (j\le d\le2j).
 \label{eq:moment-window}
\end{equation}
As \(j\) runs from \(0\) to \(r\), the intervals
\([j,2j]\) cover every integer
\[
 0,1,\ldots,k-1.
\]
Thus
\[
 \sum_{x\in R}c_{x,0}x^d=0
 \qquad(0\le d\le k-1).
\]
Fix an ordering
\[
 R=\{x_1,\ldots,x_k\}.
\]
The Vandermonde matrix
\[
 V=(x_j^d)_{\substack{0\le d\le k-1\\1\le j\le k}}
\]
has determinant
\[
 \det V=\prod_{1\le i<j\le k}(x_j-x_i)\ne0
\]
in \(\F_p\), because the elements of \(R\) are distinct.  Hence
\(c_{x,0}=0\) for every \(x\in R\).

On the other hand, the primitivity condition
\eqref{eq:primitive-c-rigorous} means that at least one \(c_x\) is a unit
of \(\mathcal O\).  Its image modulo \(\pi\), which is precisely
\(c_{x,0}\), is nonzero.  This contradiction proves
\eqref{eq:n-rigorous}.
\end{proof}

\begin{lemma}[Parabolic jet recurrence]
\label{lem:schrodinger-jet}
Retain the notation of Lemma~\ref{lem:first-nonzero-jet}, let \(n\) be
the first nonzero jet order, and put
\[
 Q_j(a,b):=H_{n+j}(a,b)
\]
whenever \(n+j\le p-2\).  Then
\begin{equation}
 \partial_b^2Q_0=0,
 \qquad
 \partial_b^2Q_j=\partial_aQ_{j-1}
 \quad(j\ge1).
 \label{eq:schrodinger-recurrence-rigorous}
\end{equation}
Consequently
\begin{equation}
 Q_0(a,b)=A(a)+bB(a)
 \label{eq:leading-pencil-rigorous}
\end{equation}
for polynomials \(A,B\in\F_p[a]\) satisfying
\begin{equation}
 \deg A\le n,
 \qquad
 \deg B\le n-1.
 \label{eq:AB-degree-rigorous}
\end{equation}
Moreover, whenever \(n+j\le p-2\),
\begin{equation}
 \deg_bQ_j\le2j+1.
 \label{eq:b-degree-rigorous}
\end{equation}
\end{lemma}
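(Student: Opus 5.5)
The recurrence will follow from a formal PDE satisfied by $\mathcal G$ itself. Each summand of \eqref{eq:G-rigorous} is $C_x(T)e^{T(ax^2+bx)}$. Differentiating formally in $a$ and $b$ (the truncated exponential is a polynomial in $a,b$ with coefficients in $\F_p[T]/(T^{p-1})$) gives
\[
 \partial_b^2\bigl(e^{T(ax^2+bx)}\bigr)=T^2x^2e^{T(ax^2+bx)},
 \qquad
 \partial_a\bigl(e^{T(ax^2+bx)}\bigr)=Tx^2e^{T(ax^2+bx)}.
\]
These hold modulo $T^{p-1}$ with truncation error only in high degree. Indeed, the coefficient of $T^j$ in $e^{Tu}$ is $u^j/j!$ for $u=ax^2+bx$. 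Hence $\partial_b^2$ of the $T^j$-coefficient is $x^2u^{j-2}/(j-2)!$, while $\partial_a$ of the $T^{j-1}$-coefficient is $x^2u^{j-2}/(j-2)!$. These agree for every $j\le p-2$, with no division by $p$, since all factorials involved are below $p$. Summing over $x\in R$ with the $T$-independent-in-$(a,b)$ factors $C_x(T)$, I would compare coefficients of $T^j$ and obtain, for all $j\le p-2$,
\[
 \partial_b^2H_j=\partial_aH_{j-1}\qquad (H_{-1}:=0).
\]
This is the coefficientwise form of $\partial_b^2\mathcal G=T\,\partial_a\mathcal G$.

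Next I would specialize to $j=n$. Since $H_{n-1}=0$ as a polynomial, this gives $\partial_b^2Q_0=0$. For $j\ge1$ the identity with $H_{n+j}$ and $H_{n+j-1}$ gives $\partial_b^2Q_j=\partial_aQ_{j-1}$.

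From $\partial_b^2Q_0=0$ in characteristic $p$ I would deduce the pencil form. Write $Q_0=\sum_m q_m(a)b^m$; then $\partial_b^2$ gives $\sum m(m-1)q_mb^{m-2}$. The coefficient $m(m-1)$ is nonzero in $\F_p$ for $2\le m\le p-1$. By Lemma~\ref{lem:first-nonzero-jet}, $\deg Q_0\le n\le\lfloor k/2\rfloor<p$, so every $m$ occurring in $Q_0$ is less than $p$. Hence $q_m=0$ for $m\ge2$, and $Q_0=A(a)+bB(a)$. The total degree bound $\deg H_n\le n$ then gives $\deg A\le n$ and $\deg B\le n-1$.

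For \eqref{eq:b-degree-rigorous} I would induct on $j$, the case $j=0$ being done. Assume $\deg_bQ_{j-1}\le 2j-1$. Then $\partial_aQ_{j-1}$ has $b$-degree at most $2j-1$, so $\partial_b^2Q_j$ does too. Again write $Q_j=\sum_m q_m(a)b^m$. For $2\le m\le p-1$ the coefficient $m(m-1)$ is invertible, so $q_m=0$ whenever $m-2>2j-1$, i.e.\ for $m>2j+1$.

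The subtle point, and the main obstacle, is the range $m\ge p$, where $m(m-1)$ could vanish mod $p$. This is excluded by the total degree bound $\deg Q_j\le n+j\le p-2$ from Lemma~\ref{lem:first-nonzero-jet}. Since $n+j\le p-2$ is exactly the hypothesis, every $b$-exponent present is at most $p-2$, and the induction closes. The remaining care is to confirm that the formal derivative identities are compatible with truncation, which the explicit coefficient comparison above settles.
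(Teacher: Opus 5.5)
Your proposal is correct and follows essentially the same route as the paper. You derive $\partial_b^2\mathcal G=T\,\partial_a\mathcal G$ modulo $T^{p-1}$ and compare the coefficients of $T^{n}$ and $T^{n+j}$. You then use that $d(d-1)\ne0$ in $\F_p$ for $2\le d<p$, together with $\deg Q_0\le n<p$, to get the pencil form, and you close the $b$-degree induction with $\deg Q_j\le n+j\le p-2$. Your explicit term-by-term check of the truncated exponential coefficients justifies the same derivative identities that the paper simply states in the truncated ring.
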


\begin{proof}
In the truncated polynomial ring,
\[
 \partial_b e^{T(a x^2+b x)}
 =
 Tx\,e^{T(a x^2+b x)}
\]
and
\[
 \partial_a e^{T(a x^2+b x)}
 =
 Tx^2\,e^{T(a x^2+b x)}.
\]
Therefore
\begin{equation}
 \partial_b^2\mathcal G
 =
 T\,\partial_a\mathcal G
 \pmod{T^{p-1}}.
 \label{eq:formal-schrodinger-rigorous}
\end{equation}
Comparing coefficients of \(T^n\) gives
\(\partial_b^2H_n=0\).  Comparing coefficients of
\(T^{n+j}\), \(j\ge1\), gives
\[
 \partial_b^2H_{n+j}
 =
 \partial_aH_{n+j-1}.
\]
This is \eqref{eq:schrodinger-recurrence-rigorous}.

Since \(\deg Q_0\le n<p\), write
\[
 Q_0(a,b)=\sum_{d=0}^{n}q_d(a)b^d.
\]
The equation \(\partial_b^2Q_0=0\) gives
\[
 d(d-1)q_d(a)=0
 \qquad(d\ge2).
\]
Because \(2\le d<p\), the scalar \(d(d-1)\) is nonzero in \(\F_p\).
Thus \(q_d=0\) for every \(d\ge2\), proving
\eqref{eq:leading-pencil-rigorous}.  Since the total degree of \(Q_0\)
is at most \(n\), the degree bounds
\eqref{eq:AB-degree-rigorous} follow.

It remains to prove \eqref{eq:b-degree-rigorous}.  We use induction on
\(j\).  The case \(j=0\) has just been proved.  Suppose
\(\deg_bQ_{j-1}\le2j-1\).  Then
\[
 \deg_b(\partial_aQ_{j-1})\le2j-1.
\]
Also \(n+j\le p-2\), hence every power of \(b\) occurring in \(Q_j\)
has exponent \(<p\).  If \(Q_j\) contained a nonzero term
\(q_d(a)b^d\) with \(d\ge2j+2\), then the term
\[
 d(d-1)q_d(a)b^{d-2}
\]
would survive in \(\partial_b^2Q_j\), because \(d<p\), and would have
\(b\)-degree at least \(2j\).  This contradicts
\[
 \partial_b^2Q_j=\partial_aQ_{j-1}.
\]
Therefore \(\deg_bQ_j\le2j+1\).
\end{proof}

\begin{remark}[Algebraic support and jet annihilators]
\label{rem:general-jet-annihilator}
Identity~\eqref{eq:formal-schrodinger-rigorous} is the quadratic
instance of a more general mechanism.  If a frequency set
\(\Lambda\subset\F_p^m\) lies in \(P(\xi)=0\), where
\[
 P(z)=\sum_\alpha p_\alpha z^\alpha,\qquad D=\deg P,
\]
then the cyclotomic generating function is annihilated by
\[
 \mathcal D_{P,T}
 =
 \sum_\alpha
 p_\alpha T^{D-|\alpha|}\partial_y^\alpha.
\]
At the first nonzero jet only the highest homogeneous part survives:
\[
 P_D(\partial_y)Q_0=0.
\]
For the parabola \(u-v^2=0\), this becomes
\(\partial_b^2Q_0=0\), which is exactly the transverse linearity
\eqref{eq:leading-pencil-rigorous}.  The broad
Fourier--differential correspondence is classical; the role here is
its finite cyclotomic first-jet form.
\end{remark}

\begin{lemma}[Exact zeros vanish in every available jet]
\label{lem:exact-zero-all-jets}
Under the hypotheses of Lemma~\ref{lem:first-nonzero-jet}, define
\[
 F(a,b)=\sum_{x\in R}c_x\omega^{a x^2+b x}.
\]
If
\[
 F(\alpha,\beta)=0
\]
in \(K\), then
\begin{equation}
 H_j(\alpha,\beta)=0
 \qquad(0\le j\le p-2).
 \label{eq:exact-zero-all-jets}
\end{equation}
Equivalently, if \(Q_j\) is defined, then
\[
 Q_j(\alpha,\beta)=0.
\]
\end{lemma}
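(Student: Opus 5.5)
The plan is to show that the ring homomorphism of Lemma~\ref{lem:truncated-cyclotomic-model}, applied to the element \(F(\alpha,\beta)\in\mathcal O\), sends it to \(\mathcal G(T;\alpha,\beta)\). Since \(F(\alpha,\beta)=0\), its image is \(0\), and every coefficient \(H_j(\alpha,\beta)\) vanishes.

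First, \(F(\alpha,\beta)\) lies in \(\mathcal O\), since each \(c_x\in\mathcal O\) by \eqref{eq:primitive-c-rigorous} and each \(\omega^{h}\in\mathbb Z[\omega]\subset\mathcal O\). Let
\[
\Theta:\mathcal O\to\mathcal O/(\pi^{p-1})\cong\F_p[T]/(T^{p-1})
\]
be the composite of reduction and the isomorphisms of Lemma~\ref{lem:truncated-cyclotomic-model}. It is a ring homomorphism. By definition \(\Theta(c_x)=C_x(T)\). Put \(h_x:=\alpha x^2+\beta x\in\F_p\). By \eqref{eq:omega-to-exp}, \(\Theta(\omega^{h_x})=e^{h_xT}\pmod{T^{p-1}}\); here \(h_x\) is an element of \(\F_p\), so \(\omega^{h_x}\) is well defined, and the exponential depends only on \(h_x\bmod p\). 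Multiplicativity and additivity of \(\Theta\) then give
\[
\Theta\bigl(F(\alpha,\beta)\bigr)=\sum_{x\in R}C_x(T)\,e^{T h_x}\pmod{T^{p-1}}.
\]

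The one point needing care is the identification of this right-hand side with \(\mathcal G(T;\alpha,\beta)\), that is, with \(\sum_j H_j(\alpha,\beta)T^j\). The polynomials \(H_j\in\F_p[a,b]\) are defined by expanding \(e^{T(ax^2+bx)}\) with \(a,b\) as indeterminates, and truncating at \(T^{p-1}\). Evaluating \(a=\alpha\), \(b=\beta\) is a ring homomorphism \(\F_p[a,b][T]/(T^{p-1})\to\F_p[T]/(T^{p-1})\). Under it, \(e^{T(ax^2+bx)}=\sum_{m\le p-2}(ax^2+bx)^mT^m/m!\) maps to \(\sum_m h_x^mT^m/m!=e^{h_xT}\), with \(h_x\) read in \(\F_p\). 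This is consistent because the coefficient \(h^m/m!\) in \(e^{hT}\) depends only on \(h\bmod p\); the same representative argument as in the proof of \eqref{eq:omega-to-exp} covers this. So the two sides coincide as elements of \(\F_p[T]/(T^{p-1})\).

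Finally, since \(F(\alpha,\beta)=0\), we get \(\Theta(0)=0\). Comparing coefficients of \(T^j\) for \(0\le j\le p-2\) gives \eqref{eq:exact-zero-all-jets}. The statement about \(Q_j=H_{n+j}\) is the special case \(j\mapsto n+j\le p-2\). No step is a real obstacle. The only subtlety is bookkeeping: \(h_x\) must be treated as an element of \(\F_p\), both in the exponent of \(\omega\) and in the evaluation of the polynomial jets.
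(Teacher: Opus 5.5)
Your proposal is correct and follows the paper's proof: both push $F(\alpha,\beta)\in\mathcal O$ through reduction modulo $\pi^{p-1}$ and the isomorphism of Lemma~\ref{lem:truncated-cyclotomic-model}. Both then identify the image with $\mathcal G(T;\alpha,\beta)=\sum_j H_j(\alpha,\beta)T^j$ and read off the vanishing coefficient by coefficient. Your extra check that evaluating at $(a,b)=(\alpha,\beta)$ commutes with the truncated exponential makes explicit a step the paper states without comment.
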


\begin{proof}
By \eqref{eq:primitive-c-rigorous}, every summand of \(F(\alpha,\beta)\)
lies in \(\mathcal O\).  The exact equality
\[
 F(\alpha,\beta)=0
\]
therefore remains zero after reduction modulo \(\pi^{p-1}\).
Under the isomorphism of Lemma~\ref{lem:truncated-cyclotomic-model}, this
reduced element is exactly
\[
 \mathcal G(T;\alpha,\beta)
 =
 \sum_{j=0}^{p-2}H_j(\alpha,\beta)T^j
 \in\F_p[T]/(T^{p-1}).
\]
The residue classes \(1,T,\ldots,T^{p-2}\) are linearly independent
over \(\F_p\).  Hence every coefficient is zero, which is precisely
\eqref{eq:exact-zero-all-jets}.
\end{proof}

\subsection{The zero bound}

\begin{lemma}[Cyclotomic parabolic zero bound]
\label{lem:cyclotomic-zero-rigorous}
Let
\[
 R\subset\F_p,\qquad 2\le k:=|R|\le p-1,
\]
and let \(c_x\in K^\times\) for every \(x\in R\).  Define
\[
 F(a,b)=\sum_{x\in R}c_x\omega^{a x^2+b x}.
\]
Then
\begin{equation}
 \#Z(F)
 :=
 \#\{(a,b)\in\F_p^2:F(a,b)=0\}
 \le
 p+2\left\lfloor\frac{k}{2}\right\rfloor-2.
 \label{eq:cyclotomic-zero-rigorous}
\end{equation}
\end{lemma}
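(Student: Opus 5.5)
The plan is to normalize the coefficients, pass to the first nonzero jet from Lemma~\ref{lem:first-nonzero-jet}, and reduce the count to a fibre-by-fibre count over the vertical lines \(a=\alpha\).

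\textbf{Setup.} First rescale the \(c_x\) by a common element of \(K^\times\) so that the primitivity condition \eqref{eq:primitive-c-rigorous} holds. This does not change \(Z(F)\). Let \(n\le\lfloor k/2\rfloor\) be the first nonzero jet order given by Lemma~\ref{lem:first-nonzero-jet}, and let \(Q_j\) be the jets of Lemma~\ref{lem:schrodinger-jet}. Then \(Q_0=A(a)+bB(a)\) with \(\deg A\le n\) and \(\deg B\le n-1\), and \(Q_0\) is not the zero polynomial. By Lemma~\ref{lem:exact-zero-all-jets}, every zero \((\alpha,\beta)\in Z(F)\) satisfies \(Q_j(\alpha,\beta)=0\) for every available \(j\). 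I will therefore partition \(\F_p^2\) into the vertical fibres \(a=\alpha\) and count zeros fibre by fibre.

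\textbf{Nonexceptional fibres.} Suppose \((A(\alpha),B(\alpha))\ne(0,0)\). Then \(Q_0(\alpha,b)\) is a nonzero polynomial in \(b\) of degree at most \(1\), so the fibre contains at most one zero of \(F\).

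\textbf{Exceptional fibres.} Now suppose \(A(\alpha)=B(\alpha)=0\). Let \(m\ge1\) be the least integer for which \(A^{(m)}(\alpha)\) and \(B^{(m)}(\alpha)\) are not both zero. Since \(Q_0\ne0\) has degree at most \(n<p\), such an \(m\le n\) exists and ordinary derivatives detect it. Iterating the recurrence \eqref{eq:schrodinger-recurrence-rigorous} gives
\[
 \partial_b^{2m}Q_m=\partial_a^mQ_0 .
\]
Evaluating at \(a=\alpha\), the right side is \(A^{(m)}(\alpha)+bB^{(m)}(\alpha)\ne0\). Hence \(Q_m(\alpha,\cdot)\) is a nonzero polynomial in \(b\). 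By \eqref{eq:b-degree-rigorous} it has degree at most \(2m+1\), so the fibre carries at most \(2m+1\) zeros. This step needs \(n+m\le p-2\), so that \(Q_m\) is defined, and \(2m+1<p\). Both follow from \(n\le\lfloor k/2\rfloor\le (p-1)/2\) except in the single edge case \(m=n=(p-1)/2\), which I treat below.

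\textbf{Case \(B\ne0\).} The exceptional multiplicities satisfy \(\sum_\alpha m_\alpha\le\deg B\le n-1\), and in particular \(m\le n-1\), so the edge case cannot occur. If \(e\) is the number of exceptional fibres, then
\[
 \#Z(F)\le (p-e)+\sum_\alpha(2m_\alpha+1)=p+2\sum_\alpha m_\alpha\le p+2n-2\le p+2\lfloor k/2\rfloor-2 .
\]

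\textbf{Case \(B\equiv0\).} Now \(A\ne0\), and every fibre with \(A(\alpha)\ne0\) contains no zeros. If \(A\) is constant, \(Z(F)=\emptyset\). If \(A\) has a root of multiplicity exactly \(n\), then \(A=c(a-\alpha)^n\) and only one fibre can carry zeros, so \(\#Z(F)\le p\le p+2n-2\) because \(n\ge1\); this covers the edge case. Otherwise every multiplicity satisfies \(m\le n-1\), the fibre bound applies, and
\[
 \#Z(F)\le\sum_\alpha(2m_\alpha+1)\le 3n\le p+2n-2,
\]
since \(n\le (p-1)/2\le p-2\).

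\textbf{Main obstacle.} The crux is the exceptional-fibre bound \(2m+1\): it needs the iterated recurrence together with the degree control \eqref{eq:b-degree-rigorous}. The only delicate points are checking that the relevant jets lie within the truncation \(n+m\le p-2\) and that \(2m<p\), so that \(\partial_b^{2m}\) does not annihilate a nonzero polynomial. These are the reasons the edge case is split off separately.
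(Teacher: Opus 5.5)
Your proof is correct and follows the paper's argument. After primitive normalization you bound each non-exceptional vertical fibre by one zero, bound each common root of $A$ and $B$ by $2m_\alpha+1$ zeros using $\partial_b^{2m}Q_m=\partial_a^mQ_0$ and $\deg_bQ_m\le 2m+1$, and then sum against $\deg B\le n-1$ (or against $\deg A\le n$ when $B\equiv0$). The one deviation is the boundary configuration $A=c(a-\alpha)^n$ with $B\equiv0$: you use the trivial bound of $p$ points on a single fibre, which suffices because $n\ge1$, whereas the paper invokes Tao's theorem to get $p-2$; this is a harmless simplification for this lemma.
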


\begin{proof}
Scale \(c\) so that \eqref{eq:primitive-c-rigorous} holds, and let
\(n\) be the first nonzero jet order.  By
Lemma~\ref{lem:first-nonzero-jet},
\begin{equation}
 n\le r:=\left\lfloor\frac{k}{2}\right\rfloor.
 \label{eq:n-vs-r}
\end{equation}
Write
\[
 Q_0(a,b)=A(a)+bB(a)
\]
as in Lemma~\ref{lem:schrodinger-jet}.

\medskip
\noindent
\textbf{Case 1: \(B\not\equiv0\).}
Let
\[
 D=\gcd(A,B),
\]
chosen monic.  We use the convention
\[
 \operatorname{ord}_\alpha 0=+\infty.
\]
Fix \(\alpha\in\F_p\).

If \(B(\alpha)\ne0\), then
\[
 A(\alpha)+bB(\alpha)=0
\]
has exactly one solution \(b\in\F_p\).  Since every exact zero must
annihilate \(Q_0\) by Lemma~\ref{lem:exact-zero-all-jets}, the vertical
fiber \(a=\alpha\) contains at most one exact zero.

If \(B(\alpha)=0\) but \(A(\alpha)\ne0\), then the fiber contains no
exact zeros.

It remains to consider a common root
\(\alpha\) of \(A\) and \(B\).  Put
\[
 m_\alpha
 :=
 \min\{\operatorname{ord}_\alpha A,
        \operatorname{ord}_\alpha B\}.
\]
Because \(B\not\equiv0\),
\[
 1\le m_\alpha\le\deg B\le n-1.
\]
Therefore
\[
 n+m_\alpha
 \le2n-1
 \le2r-1
 \le p-2,
\]
so \(Q_{m_\alpha}\) is defined.

Iterating the recurrence
\eqref{eq:schrodinger-recurrence-rigorous} \(m_\alpha\) times gives
\begin{equation}
 \partial_b^{2m_\alpha}Q_{m_\alpha}
 =
 \partial_a^{m_\alpha}Q_0.
 \label{eq:iterated-recurrence-rigorous}
\end{equation}
Indeed, the statement follows by induction:
\[
 \partial_b^{2s}Q_{m_\alpha}
 =
 \partial_a^sQ_{m_\alpha-s}
 \qquad(0\le s\le m_\alpha).
\]

At \(a=\alpha\),
\[
 \partial_a^{m_\alpha}Q_0(\alpha,b)
 =
 A^{(m_\alpha)}(\alpha)
 +
 bB^{(m_\alpha)}(\alpha).
 \label{eq:nonzero-linear-derivative}
\]
By definition of \(m_\alpha\), both \(A\) and \(B\) vanish to order at
least \(m_\alpha\), and at least one vanishes to exactly that order.
Since \(m_\alpha<p\), the factor \(m_\alpha!\) is nonzero in \(\F_p\).
Hence the polynomial in
\eqref{eq:nonzero-linear-derivative} is not identically zero.

It follows from \eqref{eq:iterated-recurrence-rigorous} that
\(Q_{m_\alpha}(\alpha,b)\) is not the zero polynomial.  By
\eqref{eq:b-degree-rigorous},
\[
 \deg_bQ_{m_\alpha}(\alpha,b)\le2m_\alpha+1<p.
\]
Every exact zero on this fiber is a root of this polynomial, by
Lemma~\ref{lem:exact-zero-all-jets}.  Hence the fiber contains at most
\begin{equation}
 2m_\alpha+1
 \label{eq:fiber-bound-common-root}
\end{equation}
exact zeros.

Let \(Z_{\F_p}(B)\) be the set of roots of \(B\) in \(\F_p\), and let
\(Z_{\F_p}(D)\) be the common roots of \(A\) and \(B\) in \(\F_p\).
Adding the fiber bounds gives
\[
 \begin{aligned}
 \#Z(F)
 &\le
 p-|Z_{\F_p}(B)|
 +\sum_{\alpha\in Z_{\F_p}(D)}(2m_\alpha+1)\\
 &=
 p
 +2\sum_{\alpha\in Z_{\F_p}(D)}m_\alpha
 +|Z_{\F_p}(D)|-|Z_{\F_p}(B)|\\
 &\le
 p+2\sum_{\alpha\in Z_{\F_p}(D)}m_\alpha\\
 &\le
 p+2\deg D\\
 &\le
 p+2\deg B\\
 &\le
 p+2n-2.
 \end{aligned}
 \label{eq:count-case-one-rigorous}
\]

\medskip
\noindent
\textbf{Case 2: \(B\equiv0\).}
Then \(Q_0=A(a)\) with \(A\ne0\).  By
Lemma~\ref{lem:exact-zero-all-jets}, an exact zero can occur only on a
fiber \(a=\alpha\) for which \(A(\alpha)=0\).

Let \(\alpha\in\F_p\) be a root of multiplicity
\[
 m_\alpha:=\operatorname{ord}_\alpha A.
\]
If
\[
 n+m_\alpha\le p-2,
 \label{eq:safe-root-order}
\]
then the same recurrence argument as in Case 1 gives
\[
 \partial_b^{2m_\alpha}Q_{m_\alpha}(\alpha,b)
 =
 A^{(m_\alpha)}(\alpha)\ne0.
\]
Thus \(Q_{m_\alpha}(\alpha,b)\) is a nonzero polynomial of degree at
most \(2m_\alpha+1\), and the fiber contains at most
\(2m_\alpha+1\) exact zeros.

Suppose first that \eqref{eq:safe-root-order} holds for every
\(\F_p\)-root of \(A\).  Then
\[
 \begin{aligned}
 \#Z(F)
 &\le
 \sum_{\alpha\in Z_{\F_p}(A)}(2m_\alpha+1)\\
 &=
 2\sum_{\alpha\in Z_{\F_p}(A)}m_\alpha
 +|Z_{\F_p}(A)|\\
 &\le
 2\deg A+\deg A\\
 &\le3n.
 \end{aligned}
\]
For \(n\ge1\),
\[
 3n\le p+2n-2
\]
because \(n\le r\le(p-1)/2\le p-2\).  If \(n=0\), then \(A\) is a
nonzero constant and \(\#Z(F)=0\).

It remains to treat a root for which
\[
 n+m_\alpha\ge p-1.
\]
Since
\[
 m_\alpha\le\deg A\le n
 \quad\text{and}\quad
 n\le r\le\frac{p-1}{2},
\]
all inequalities must be equalities:
\[
 k=p-1,
 \qquad
 n=m_\alpha=\frac{p-1}{2}.
 \label{eq:boundary-equalities}
\]
In particular \(\deg A=n\), and \(A\) has the single root
\(\alpha\) with multiplicity \(n\).  Hence every exact zero of \(F\)
lies on the single fiber \(a=\alpha\).

On that fiber,
\[
 b\longmapsto F(\alpha,b)
 =
 \sum_{x\in R}
 \bigl(c_x\omega^{\alpha x^2}\bigr)\omega^{bx}
\]
is the ordinary Fourier transform on \(\F_p\) of the nonzero function
\[
 x\longmapsto c_x\omega^{\alpha x^2}\mathbf 1_R(x),
\]
whose support is \(R\), of size \(p-1\).  Tao's
prime-dimensional uncertainty principle \cite{Tao} gives
\[
 |\supp F(\alpha,\cdot)|
 \ge
 p+1-(p-1)=2.
\]
Thus this fiber contains at most \(p-2\) zeros.  Since all other fibers
contain none,
\[
 \#Z(F)\le p-2.
\]
This is stronger than the desired bound.

Combining both cases,
\[
 \#Z(F)\le p+2n-2
 \le
 p+2\left\lfloor\frac{k}{2}\right\rfloor-2,
\]
which proves \eqref{eq:cyclotomic-zero-rigorous}.
\end{proof}

\begin{theorem}[Parabolic zero theorem]
\label{thm:parabolic-zero-rigorous}
Let
\[
 R\subset\F_p,\qquad 1\le k:=|R|\le p-1,
\]
and let \(c_x\in\C^\times\) for every \(x\in R\).  Define
\[
 F(a,b)=\sum_{x\in R}c_x\omega^{a x^2+b x}.
\]
If \(k=1\), then \(F\) has no zeros.  If \(k\ge2\), then
\begin{equation}
 \boxed{
 \#Z(F)
 \le
 p+2\left\lfloor\frac{k}{2}\right\rfloor-2.
 }
 \label{eq:parabolic-zero-rigorous}
\end{equation}
Equivalently,
\[
 \#Z(F)\le
 \begin{cases}
 p+k-2,&k\text{ even},\\
 p+k-3,&k\ge3\text{ odd}.
 \end{cases}
\]
\end{theorem}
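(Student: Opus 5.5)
The plan is to reduce Theorem~\ref{thm:parabolic-zero-rigorous} to the cyclotomic bound of Lemma~\ref{lem:cyclotomic-zero-rigorous}, handling complex coefficients with Lemma~\ref{lem:complex-to-cyclotomic}. We argue by strong induction on \(k=|R|\). The base case \(k=1\) is immediate: \(F(a,b)=c_x\omega^{ax^2+bx}\) has modulus \(|c_x|\ne0\), so it has no zeros. Note also that the right-hand side \(p+2\lfloor k/2\rfloor-2\) is nondecreasing in \(k\). This is what lets us pass from a smaller support to the bound for \(k\).

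For \(k\ge2\), let \(Z=Z(F)\) be the actual zero set and apply Lemma~\ref{lem:complex-to-cyclotomic} to \(R\), \(Z\) and the coefficient vector \(c\). There are two alternatives.

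In alternative (2), \(c=\lambda w\) with \(\lambda\in\C^\times\) and \(w\in K^R\). Since every \(c_x\ne0\), every \(w_x\ne0\), so \(w_x\in K^\times\). The zero set of \(F_{R,w}\) equals that of \(F_{R,c}\), and Lemma~\ref{lem:cyclotomic-zero-rigorous} gives the bound directly.

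In alternative (1), there is \(0\ne c'\) vanishing on all of \(Z\) whose actual support \(R'\subsetneq R\) has size \(k'\le k-1\). Then \(Z\subseteq Z(F_{R',c'})\), and we bound the latter by induction.
\begin{itemize}
\item If \(k'=1\), then \(F_{R',c'}\) has no zeros, so \(Z=\emptyset\).
\item If \(k'\ge2\), the inductive hypothesis gives \(\#Z\le p+2\lfloor k'/2\rfloor-2\le p+2\lfloor k/2\rfloor-2\).
\end{itemize}
The hypothesis \(k'\le p-1\) needed for the induction is automatic.

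One subtlety is that \(Z\) is finite, so it can be used directly as the set in Lemma~\ref{lem:complex-to-cyclotomic}. Another is that the lemma as stated only asserts the existence of some \(c'\) with small support. That is exactly what the induction needs, since \(c'\) is allowed to have zero coordinates and we pass to its actual support.

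Finally, the two displayed forms agree: \(2\lfloor k/2\rfloor=k\) for \(k\) even and \(k-1\) for \(k\) odd. The main obstacle, the cyclotomic zero count, is already handled in Lemma~\ref{lem:cyclotomic-zero-rigorous}. The only delicate point left here is making sure that \(w\) has full support in alternative (2) and that the induction uses the actual support in alternative (1).
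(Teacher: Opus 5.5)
Your proposal is correct and follows the paper's proof essentially verbatim: strong induction on \(k\), applying Lemma~\ref{lem:complex-to-cyclotomic} to \(Z(F)\), reducing to a smaller actual support in the rank-deficient case and invoking Lemma~\ref{lem:cyclotomic-zero-rigorous} in the full-rank case. Your explicit check that \(w\) has no zero coordinates, so that \(w_x\in K^\times\) as that lemma requires, is a point the paper leaves implicit.
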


\begin{proof}
We use induction on \(k\).

For \(k=1\),
\[
 F(a,b)=c_x\omega^{a x^2+b x}
\]
for the unique \(x\in R\), so \(F(a,b)\ne0\) everywhere.

Assume \(k\ge2\), and assume the theorem has been proved for every
strictly smaller support size.  Let
\[
 Z:=Z(F).
\]
For \(y=(a,b)\in Z\), define
\[
 v_y=
 \bigl(\omega^{a x^2+b x}\bigr)_{x\in R}.
\]
The coefficient vector \(c\) satisfies the linear system
\[
 \sum_{x\in R}c_x(v_y)_x=0
 \qquad(y\in Z).
\]
Apply Lemma~\ref{lem:complex-to-cyclotomic}.

If the rank \(r_Z\le k-2\), that lemma provides a nonzero vector \(c'\)
satisfying all the same zero equations and having at most \(k-1\) nonzero
coordinates.  Let
\[
 R':=\{x\in R:c'_x\ne0\},
\]
so \(1\le |R'|<k\), and let \(F'\) be the corresponding parabolic sum
written on its actual support \(R'\).
Then
\[
 Z(F)\subseteq Z(F').
\]
If \(|R'|=1\), then \(F'\) has no zeros, so the required estimate is
immediate.  If \(|R'|\ge2\), the induction hypothesis gives
\[
 \begin{aligned}
 |Z(F)|
 &\le |Z(F')|\\
 &\le
 p+2\left\lfloor\frac{|R'|}{2}\right\rfloor-2\\
 &\le
 p+2\left\lfloor\frac{k}{2}\right\rfloor-2.
 \end{aligned}
\]

If \(r_Z=k-1\), Lemma~\ref{lem:complex-to-cyclotomic} shows that
\(c\) is a nonzero complex scalar multiple of a vector in \(K^R\).
Multiplication by a nonzero scalar does not change the zero set.
Therefore Lemma~\ref{lem:cyclotomic-zero-rigorous} applies and gives
the required estimate.

These two cases exhaust all possibilities because the existence of the
nonzero kernel vector \(c\) implies \(r_Z\le k-1\).
\end{proof}

\section{Parity-sensitive refinements for odd support}
\label{sec:odd-refinements}

Throughout this section, unless otherwise stated, $p\ge5$ is prime and the actual support satisfies $1\le |R|\le p-1$.

The general parabolic zero theorem is optimal throughout the even-support range needed for the complete-MUB problem, but odd support is substantially more rigid.  The goal of this section is to prove the full parity-sensitive theorem announced in the introduction.  We isolate here the additional mechanisms: affine-line rigidity, low-support B\'ezout arguments, persistence and breaking of jet components, and restriction to rational components.  The component analysis below closes all full-rank branches.  Coupled with an even-support large-zero-line theorem, it yields a general parity-sensitive bound for every odd support size.  None of this section is needed for Theorem~\ref{thm:exact-mub-rigorous}.

\subsection{Affine lines and the first odd supports}

\begin{lemma}[Restriction to an affine line]
\label{lem:affine-line-zero-bound}
Let
\[
 F(a,b)=\sum_{x\in R}c_x\omega^{a x^2+b x},\qquad |R|=k,
\]
with all $c_x\ne0$, and let $\ell\subset\F_p^2$ be an affine line.
If $\ell$ is vertical, then
\[
 |Z(F)\cap\ell|\le k-1.
\]
If $\ell$ is nonvertical, then either $F|_\ell\equiv0$ or
\begin{equation}
 |Z(F)\cap\ell|
 \le
 \min\left\{k-1,\frac{p-1}{2}\right\}.
 \label{eq:nonvertical-line-sharp}
\end{equation}
The identically-zero alternative is impossible when $k$ is odd.  Hence
\begin{equation}
 |Z(F)\cap\ell|\le k-1
 \qquad(k\text{ odd}),
 \label{eq:odd-line-bound}
\end{equation}
and for odd support on a nonvertical line one may use the sharper
bound \eqref{eq:nonvertical-line-sharp}.
\end{lemma}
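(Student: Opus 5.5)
For a vertical line I would apply Tao's inequality on a single fiber. For a nonvertical line I would first fold the restriction into a one-variable Fourier sum over the values of the quadratic $x^2+\lambda x$, and then apply Tao to that.

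\emph{Vertical lines.} Let $\ell=\{a=\alpha\}$. Then
\[
 F(\alpha,b)=\sum_{x\in R}\bigl(c_x\omega^{\alpha x^2}\bigr)\omega^{bx}
\]
is the Fourier transform on $\F_p$ of a function with support exactly $R$, since the coefficients $c_x\omega^{\alpha x^2}$ are nonzero. Tao's inequality \eqref{eq:intro-tao} gives $|\supp F(\alpha,\cdot)|\ge p+1-k$. Hence the fiber contains at most $k-1$ zeros. This is the same argument already used at the end of the proof of Lemma~\ref{lem:cyclotomic-zero-rigorous}.

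\emph{Nonvertical lines.} Parametrize $\ell=\{(a,\lambda a+\mu):a\in\F_p\}$. Then
\[
 F(a,\lambda a+\mu)=\sum_{x\in R}c_x\omega^{\mu x}\,\omega^{a(x^2+\lambda x)}
 =\sum_{h\in\F_p}d_h\,\omega^{ah},
 \qquad
 d_h:=\sum_{\substack{x\in R\\ x^2+\lambda x=h}}c_x\omega^{\mu x}.
\]
The map $q(x)=x^2+\lambda x$ has fibers $\{x,-\lambda-x\}$. These are pairs except for the singleton $\{-\lambda/2\}$, which exists because $p$ is odd. Consequently the image of $q$ has exactly $(p+1)/2$ elements.

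Let $s$ be the number of $h$ with $d_h\ne0$. If $s=0$, then $F|_\ell\equiv0$. Otherwise $a\mapsto F(a,\lambda a+\mu)$ is the Fourier transform of a nonzero function of support size $s$, so Tao's inequality gives at most $s-1$ zeros on $\ell$. The support of $d$ lies in $q(R)$, which is contained in the image of $q$. Hence
\[
 s\le\min\Bigl\{|q(R)|,\tfrac{p+1}{2}\Bigr\}\le\min\Bigl\{k,\tfrac{p+1}{2}\Bigr\},
\]
and this yields \eqref{eq:nonvertical-line-sharp}. Cancellation inside a pair only lowers $s$, so it does not affect the bound.

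\emph{Odd $k$.} Suppose $k$ is odd. Since $R$ has odd size, it cannot be a disjoint union of full fibers of $q$ that are all pairs. So either $-\lambda/2\in R$, or some $x\in R$ has its partner $-\lambda-x\notin R$. In either case that fiber meets $R$ in exactly one point $x$, and then $d_{q(x)}=c_x\omega^{\mu x}\ne0$. Thus $s\ge1$ and $F|_\ell\not\equiv0$. Combining this with the vertical case gives \eqref{eq:odd-line-bound}.

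The only step needing care is the fiber bookkeeping for $q$: checking that the image has $(p+1)/2$ elements and that an odd support always meets some fiber in a single point. Once that is in place, everything reduces to one-dimensional applications of Tao's theorem.
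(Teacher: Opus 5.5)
Your proposal is correct and follows the paper's proof essentially step for step. The paper applies Tao on vertical fibers, folds the nonvertical restriction through the fibers of $x\mapsto x^2+\lambda x$ (whose image has size $(p+1)/2$) before applying Tao, and uses the same parity argument that an odd support must meet some fiber in a single point. The only cosmetic difference is that you argue the odd case directly, by exhibiting a nonzero $d_h$, whereas the paper assumes $g\equiv0$ and deduces that $|R|$ is even.
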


\begin{proof}
First consider the vertical line \(a=\alpha\).  Define
\[
 f_\alpha(x):=c_x\omega^{\alpha x^2}\mathbf 1_R(x).
\]
This is a nonzero function on \(\F_p\) with support exactly \(R\), hence
\(|\supp f_\alpha|=k\).  Up to the harmless sign convention in the Fourier
exponent, its Fourier transform is
\[
 b\longmapsto F(\alpha,b).
\]
Tao's prime-order uncertainty principle gives
\[
 k+|\supp F(\alpha,\cdot)|\ge p+1.
\]
Therefore \(|\supp F(\alpha,\cdot)|\ge p+1-k\), so the vertical fiber
contains at most \(p-(p+1-k)=k-1\) zeros.

Now let the line be nonvertical and write it as \(b=\mu a+\nu\).  Put
\[
 d_x:=c_x\omega^{\nu x},
 \qquad
 h(x):=x^2+\mu x,
\]
and define a function \(g:\F_p\to\C\) by
\[
 g(t):=\sum_{\substack{x\in R\\ h(x)=t}}d_x.
\]
Grouping the terms of the parabolic sum by the value of \(h(x)\) gives
\[
 F(a,\mu a+\nu)
 =\sum_{x\in R}d_x\omega^{a h(x)}
 =\sum_{t\in\F_p}g(t)\omega^{at}.
\]
If \(g\ne0\), Tao's theorem applied to \(g\) gives
\[
 |\supp g|+|\supp \widehat g|\ge p+1.
\]
The number of zeros of \(a\mapsto F(a,\mu a+\nu)\) is therefore at most
\(|\supp g|-1\).  Since
\[
 h(x)=\left(x+\frac\mu2\right)^2-\frac{\mu^2}{4},
\]
its image has cardinality \((p+1)/2\), and of course
\(|\supp g|\le |R|=k\).  Thus
\[
 |\supp g|\le\min\left\{k,\frac{p+1}{2}\right\},
\]
which proves \eqref{eq:nonvertical-line-sharp}.

It remains to analyze the alternative \(g=0\), which is exactly the condition
that \(F\) vanish identically on the line.  The fibers of \(h\) are the
orbits of the involution
\[
 x\longmapsto-\mu-x.
\]
There is one fixed point \(-\mu/2\); all other fibers have two points.  Since
every coefficient \(d_x\) with \(x\in R\) is nonzero, an occupied fiber
cannot meet \(R\) in exactly one point, for then the corresponding value of
\(g\) would be nonzero.  The fixed point therefore does not belong to \(R\),
and every occupied fiber contributes exactly two support points.  Hence
\(|R|\) is even.  In particular, exact vanishing on a nonvertical line is
impossible for odd support.
\end{proof}

\begin{lemma}[Affine symmetries of the parabolic zero problem]
\label{lem:parabolic-affine-symmetry}
Let \(t,\nu\in\F_p\).  If
\[
 R_t:=R-t,
 \qquad
 c^{(t)}_y:=c_{y+t}\quad(y\in R_t),
\]
then
\begin{equation}
 F_{R,c}(a,b)
 =
 \omega^{at^2+bt}
 F_{R_t,c^{(t)}}(a,b+2at).
 \label{eq:support-translation-symmetry}
\end{equation}
Moreover, replacing \(c_x\) by \(c_x\omega^{\nu x}\) replaces
\(F(a,b)\) by \(F(a,b+\nu)\), while replacing \(c_x\) by
\(c_x\omega^{\sigma x^2}\) replaces it by \(F(a+\sigma,b)\).  Hence support
translation, linear rephasing, and quadratic rephasing preserve the number
of zeros and preserve vertical/nonvertical affine-line structure.  The first
two operations act transitively on nonvertical affine lines, so any such line
may be normalized to \(b=0\); quadratic rephasing translates the vertical
coordinate \(a\).

If the coefficients have been cyclotomically normalized, these operations
preserve the first nonzero jet order and the total degree of its leading jet.
They also preserve divisibility of the first two jets by the transformed
nonvertical factor: if \(C\mid Q_0,Q_1\), then the corresponding transformed
factor divides the transformed \(Q_0,Q_1\).
\end{lemma}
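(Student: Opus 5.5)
The identities are verified by direct substitution, and the jet statements then follow because each operation acts on the cyclotomic generating function \(\mathcal G\) by a substitution that preserves \(T\)-degree.

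\textbf{Translation identity.} For \eqref{eq:support-translation-symmetry}, write \(x=y+t\) with \(y\in R_t\). Then
\[
 ax^2+bx=ay^2+(b+2at)y+(at^2+bt).
\]
Summing \(c_{y+t}\omega^{ay^2+(b+2at)y}\) over \(y\in R_t\) and pulling out the constant phase gives the identity.

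\textbf{Rephasings.} These are immediate: \(\omega^{\nu x}\omega^{ax^2+bx}=\omega^{ax^2+(b+\nu)x}\), and similarly \(\omega^{\sigma x^2}\omega^{ax^2+bx}=\omega^{(a+\sigma)x^2+bx}\).

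\textbf{Zero counts and lines.} Each operation has the form \(F'(a,b)=(\text{unit})\cdot F(\Phi(a,b))\), where \(\Phi\) is one of
\[
 (a,b)\mapsto(a,b+2at),\qquad (a,b)\mapsto(a,b+\nu),\qquad (a,b)\mapsto(a+\sigma,b).
\]
Each \(\Phi\) is an affine bijection of \(\F_p^2\) that preserves the first coordinate up to translation. So zero counts are preserved, and \(\Phi\) maps vertical lines to vertical lines and nonvertical lines to nonvertical lines.

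\textbf{Transitivity on nonvertical lines.} Consider the line \(b=\mu a+\nu\). The shear with \(2t=-\mu\) (using \(p\) odd), followed by a linear rephasing, carries it to \(b=0\).

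\textbf{Jets.} Suppose the \(c_x\) are normalized in \(\mathcal O\) with minimum valuation \(0\). Multiplying the coefficients by the root of unity \(\omega^{\nu x}\) or \(\omega^{\sigma x^2}\) keeps them units-times-original, so valuations and primitivity are preserved. For translation, the coefficients are merely relabelled.

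On the jet side, Lemma~\ref{lem:truncated-cyclotomic-model} sends \(\omega^h\) to \(e^{hT}\). So the new generating function is \(\mathcal G'(T;a,b)=\mathcal G(T;\Phi(a,b))\), in the rephasing cases after absorbing the shift into the exponent. In the translation case there is an extra factor \(e^{T(at^2+bt)}\) multiplying \(\mathcal G(T;a,b+2at)\); here I match the direction of the identity carefully, inverting \(\Phi\) as needed.

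Since \(\Phi\) is affine and independent of \(T\), composing with \(\Phi\) sends each \(H_j\) to \(H_j\circ\Phi\). This map preserves being the zero polynomial and preserves total degree. Multiplying by \(e^{T(\cdot)}=1+O(T)\) only adds lower-order jets to higher coefficients, so the first nonzero jet becomes
\[
 Q_0'=Q_0\circ\Phi,\qquad Q_1'=Q_1\circ\Phi+L\cdot(Q_0\circ\Phi),
\]
where \(L\) is a linear polynomial. Therefore \(n\) and \(\deg Q_0\) are unchanged.

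For divisibility, suppose \(C\mid Q_0\) and \(C\mid Q_1\). Then \(C\circ\Phi\) divides both \(Q_0\circ\Phi\) and \(Q_1\circ\Phi\), hence divides \(Q_1'\). Because \(\Phi\) preserves nonverticality, the factor \(C\circ\Phi\) is again nonvertical.

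\textbf{Main obstacle.} The delicate point is the extra phase factor \(e^{T(at^2+bt)}\) in the translation case. It mixes jets, and one must check that it is a unit of the form \(1+O(T)\) with polynomial coefficients. It is: its \(T^j\) coefficient is \((at^2+bt)^j/j!\) with \(j<p\). This is exactly what keeps the leading jet unchanged and makes the divisibility argument go through at the level of \(Q_1\).
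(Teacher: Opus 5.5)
Your proposal is correct and follows essentially the same route as the paper: direct substitution gives the three identities, and the cyclotomic generating function then transforms by an affine substitution $\Theta$ times a unit $1+T\Lambda+O(T^2)$, so $\widetilde Q_0=Q_0\circ\Theta$ and $\widetilde Q_1=Q_1\circ\Theta+\Lambda\,(Q_0\circ\Theta)$. You also spell out two points the paper leaves implicit, and both are correct: the shear with $2t=-\mu$ followed by linear rephasing for transitivity on nonvertical lines, and the preservation of cyclotomic primitivity under rephasing by roots of unity.
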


\begin{proof}
Writing \(x=y+t\) gives \eqref{eq:support-translation-symmetry} by
expanding
\[
 a(y+t)^2+b(y+t)
 =ay^2+(b+2at)y+(at^2+bt).
\]
Linear and quadratic rephasing give the identities
\[
 \sum_x c_x\omega^{\nu x}\omega^{ax^2+bx}=F(a,b+\nu),
 \qquad
 \sum_x c_x\omega^{\sigma x^2}\omega^{ax^2+bx}=F(a+\sigma,b).
\]
In the truncated cyclotomic model, support translation gives
\[
 \mathcal G_{R,c}(T;a,b)
 =e^{T(at^2+bt)}
  \mathcal G_{R_t,c^{(t)}}(T;a,b+2at).
\]
Write the unit factor as \(1+T\Lambda+O(T^2)\).  Since
\(H_0=\cdots=H_{n-1}=0\), the transformed first two jets have the form
\[
 \widetilde Q_0=Q_0\circ\Theta,
 \qquad
 \widetilde Q_1=Q_1\circ\Theta+\Lambda\,(Q_0\circ\Theta),
\]
where \(\Theta\) is the corresponding affine change of \((a,b)\).  Thus a
common factor of \(Q_0\) and \(Q_1\) is carried to a common factor of the
transformed first two jets.  The same formulas show that the first nonzero
\(T\)-order is unchanged.  The affine substitutions preserve total degree.
Linear and quadratic rephasing are simply the substitutions
\(b\mapsto b+\nu\) and \(a\mapsto a+\sigma\), respectively.
\end{proof}

\begin{remark}[Why the parity mechanism characterizes quadratics]
\label{rem:segre-quadratic-characterization}
The preceding pairing argument has a finite-geometric converse.
For a general graph spectrum
\[
 \Gamma_\phi=\{(\phi(x),x):x\in\F_p\},
\]
exact vanishing on a nonvertical line is equivalent to fiberwise
cancellation for \(x\mapsto\phi(x)+\mu x\).  Thus odd support is
universally incapable of producing an exact zero line if every such
fiber has size at most two.  Conversely, if one fiber contains three
distinct points \(x_1,x_2,x_3\), taking coefficients \(1,1,-2\) on
those points gives an odd-support sum that vanishes identically on the
corresponding nonvertical line.  Hence universal odd-line rigidity is
equivalent to every fiber having size at most two, or equivalently to
\(\Gamma_\phi\) having no three collinear points.

Add the common point \(P_\infty=[1:0:0]\) at infinity of the vertical
lines.  The resulting \(p+1\) points form an oval in \(PG(2,p)\), so
Segre's theorem~\cite{SegreOval} implies, for odd \(p\), that they lie
on a nondegenerate conic.  Write its homogeneous equation as
\[
 AU^2+BUV+CV^2+DUW+EVW+FW^2=0.
\]
Since \(P_\infty\) lies on the conic, \(A=0\).  On the vertical affine
line \(V=xW\), the equation becomes
\[
 U(Bx+D)+Cx^2+Ex+F=0.
\]
For every \(x\in\F_p\), the affine graph point on this vertical
line lies on the conic.  If \(Bx+D=0\), the remaining constant term must
also vanish, so the whole vertical line would be a component of the conic,
contradicting nondegeneracy.  Hence \(Bx+D\ne0\) for every
\(x\in\F_p\).  Therefore \(B=0\) and \(D\ne0\), and nondegeneracy
forces \(C\ne0\).  Consequently
\[
 \phi(x)=-\frac{C}{D}x^2-\frac{E}{D}x-\frac{F}{D},
\]
a nondegenerate quadratic.  The converse is immediate because a
nonzero quadratic polynomial has fibers of size at most two.  Thus the
parity phenomenon used below is genuinely quadratic, rather than a
generic feature of polynomial graph spectra.
\end{remark}

Put
\[
 \mu_p:=\{\omega^j:j\in\F_p\},
\]
the group of \(p\)-th roots of unity.

\begin{theorem}[Exact three-point bound]
\label{thm:three-point-exact}
If $|R|=3$, then
\begin{equation}
 \boxed{|Z(F)|\le2,}
 \label{eq:three-point-bound}
\end{equation}
and the constant $2$ is attained.
\end{theorem}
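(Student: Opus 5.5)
The plan is to avoid the cyclotomic machinery entirely. For three-point support, the zero equation is a triangle relation among complex numbers with prescribed moduli. Such a triangle is determined up to reflection, so there are at most two solutions.

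Write \(R=\{x_1,x_2,x_3\}\) and scale so that \(c_{x_1}=1\). A point \((a,b)\) lies in \(Z(F)\) exactly when
\[
 c_{x_2}\zeta+c_{x_3}\eta=-1,
 \qquad
 \zeta=\omega^{a(x_2^2-x_1^2)+b(x_2-x_1)},\quad
 \eta=\omega^{a(x_3^2-x_1^2)+b(x_3-x_1)} .
\]

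The first step is to show that the map \(\Psi:(a,b)\mapsto(\zeta,\eta)\in\mu_p^2\) is injective, indeed a bijection. The exponents are an \(\F_p\)-linear function of \((a,b)\) with determinant
\[
 (x_2^2-x_1^2)(x_3-x_1)-(x_3^2-x_1^2)(x_2-x_1)=(x_2-x_1)(x_3-x_1)(x_2-x_3)\ne0 .
\]
This is just the Vandermonde nondegeneracy already used in Lemma~\ref{lem:first-nonzero-jet}. It follows that \(|Z(F)|\) equals the number of pairs \((\zeta,\eta)\in\mu_p^2\) satisfying the displayed relation.

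The second step is the geometric count. Consider \(u=c_{x_2}\zeta\) and \(v=c_{x_3}\eta\). These are complex numbers of fixed moduli \(|c_{x_2}|\) and \(|c_{x_3}|\) with \(u+v=-1\). So \(u\) lies on the intersection of the circle \(|u|=|c_{x_2}|\) with the circle \(|u+1|=|c_{x_3}|\). Two distinct circles, whose centres \(0\) and \(-1\) differ, meet in at most two points, and these are complex conjugates. Hence \(u\) takes at most two values, and then \(v=-1-u\) is determined. Since \(c_{x_2},c_{x_3}\ne0\), the pair \((\zeta,\eta)=(u/c_{x_2},v/c_{x_3})\) is also determined, giving at most two pairs. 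Combined with the injectivity of \(\Psi\), this yields \(|Z(F)|\le2\). This works for arbitrary complex \(c\), so Lemma~\ref{lem:complex-to-cyclotomic} is not needed.

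For attainment, pick two pairs \((\zeta,\eta),(\zeta',\eta')\in\mu_p^2\) with \(\zeta\ne\zeta'\), \(\eta\ne\eta'\) and \(\zeta\eta'\ne\zeta'\eta\); for instance \(\zeta=\eta=1\), \(\zeta'=\omega\), \(\eta'=\omega^2\). Solve the \(2\times2\) linear system
\[
 c_2\zeta+c_3\eta=-1,\qquad c_2\zeta'+c_3\eta'=-1 .
\]
If \(c_3=0\), then \(c_2\zeta=c_2\zeta'=-1\) would force \(\zeta=\zeta'\), a contradiction. Symmetrically \(c_2\ne0\), so all coefficients are nonzero. Because \(\Psi\) is surjective onto \(\mu_p^2\), both pairs are realized by points \((a,b)\), and these give two zeros.

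The only step requiring care is the injectivity and surjectivity of \(\Psi\), that is, the nonvanishing determinant. Everything else is elementary plane geometry.
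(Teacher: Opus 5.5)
Your proof is correct and follows essentially the same route as the paper. Both use the nonvanishing determinant $(x_2-x_1)(x_3-x_1)(x_2-x_3)$ to put zeros in bijection with solutions in $\mu_p^2$, and then observe that two nonconcentric circles meet in at most two points. The only difference is in the attaining example: you obtain it by solving a $2\times2$ linear system and checking that no coefficient vanishes, whereas the paper writes down the explicit coefficients $c_0=-(1+\omega^{-1})$, $c_1=\omega^{-1}$, $c_2=1$, which vanish at $(1,1)$ and $(\omega,\omega^{-1})$.
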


\begin{proof}
Write $R=\{x_0,x_1,x_2\}$ and factor the $x_0$ phase.  The linear map
\[
 (a,b)\longmapsto
 \bigl(a(x_1^2-x_0^2)+b(x_1-x_0),\,
       a(x_2^2-x_0^2)+b(x_2-x_0)\bigr)
\]
has determinant $(x_1-x_0)(x_2-x_0)(x_1-x_2)\ne0$.  Thus zeros are in bijection with pairs $(z,w)\in\mu_p^2$ satisfying
\[
 c_0+c_1z+c_2w=0.
\]
For a solution, $|c_0+c_1z|=|c_2|$.  Hence $c_1z$ lies on two nonconcentric Euclidean circles, so there are at most two possibilities for $z$.  Equality occurs, for example, at $(z,w)=(1,1)$ and $(\omega,\omega^{-1})$ with $c_2=1$, $c_1=\omega^{-1}$, $c_0=-(1+\omega^{-1})$.
\end{proof}

We shall repeatedly use the following standard interpolation identity.

\begin{lemma}[Barycentric moments]
\label{lem:barycentric-moments}
For distinct $R=\{x_1,\ldots,x_k\}\subset\F_p$, $2\le k<p$, put
\[
 P_R(X)=\prod_{i=1}^k(X-x_i),\qquad w_i=P_R'(x_i)^{-1}.
\]
Then
\[
 \sum_iw_ix_i^m=0\quad(0\le m\le k-2),\qquad
 \sum_iw_ix_i^{k-1}=1,
\]
and
\[
 \sum_iw_ix_i^k=\sum_ix_i.
\]
Moreover the solutions of $\sum_iu_ix_i^m=0$ for $0\le m\le k-3$ are exactly
\[
 u_i=w_i(\lambda x_i+\mu).
\]
\end{lemma}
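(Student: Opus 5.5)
The plan is to use partial fractions (Lagrange interpolation) over $\F_p$ for the first three identities, and a dimension count for the final characterization.

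\emph{Power moments.} For $0\le m\le k-1$, the polynomial $X^m$ has degree less than $k$. Lagrange interpolation at the nodes $x_i$ therefore gives
\[
 \frac{X^m}{P_R(X)}=\sum_i\frac{w_ix_i^m}{X-x_i}.
\]
Multiply both sides by $X$ and compare leading behaviour; equivalently, compare the coefficient of $X^{k-1}$ in
\[
 X^m=\sum_i w_ix_i^m\prod_{j\ne i}(X-x_j).
\]
This gives $\sum_iw_ix_i^m$ equal to the $X^{k-1}$-coefficient of $X^m$, namely $0$ for $m\le k-2$ and $1$ for $m=k-1$. Since $k<p$ and the nodes are distinct, every $P_R'(x_i)$ is nonzero, so the weights $w_i$ are defined.

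\emph{The $k$-th moment.} Write $X^k=P_R(X)+(e_1X^{k-1}-\cdots)$, where $e_1=\sum_ix_i$. Evaluating at the nodes gives $x_i^k=e_1x_i^{k-1}+(\text{lower powers})$. Summing against $w_i$ and using the previous step yields $\sum_iw_ix_i^k=e_1$.

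\emph{Solutions of the truncated moment system.} The system $\sum_iu_ix_i^m=0$ for $0\le m\le k-3$ has $k-2$ equations. Its coefficient matrix consists of $k-2$ rows of a Vandermonde matrix on distinct nodes, so it has full rank $k-2$ and the solution space has dimension exactly $2$ (when $k=2$ there are no equations and the space is all of $\F_p^2$).

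The vectors $u_i=w_i$ and $u_i=w_ix_i$ are solutions. For $u_i=w_i$ this is the first step with $m\le k-3$. For $u_i=w_ix_i$, the relevant moments are $\sum_iw_ix_i^{m+1}$ with $m+1\le k-2$, which vanish by the first step.

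These two vectors are linearly independent. Their moments of order $k-2$ are $(0,1)$, while their moments of order $k-3$ are $(0,0)$ for $k\ge3$. For $k=2$, use the order-$0$ moments, which are $0$ and $1$, together with the nonvanishing of $w$. In general, $\lambda w_ix_i+\mu w_i=0$ for all $i$ forces $\lambda x_i+\mu=0$ at two distinct nodes, so $\lambda=\mu=0$.

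Hence the solution space is exactly $\{w_i(\lambda x_i+\mu)\}$. The only point requiring care is the full-rank count, and that is immediate from Vandermonde.
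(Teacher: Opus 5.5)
Your proof is correct and follows essentially the same route as the paper: Lagrange interpolation with comparison of the $X^{k-1}$-coefficients for the moments of order at most $k-1$, reduction of $X^k$ modulo $P_R$ for the $k$-th moment, and a Vandermonde rank count together with the two-distinct-nodes independence argument for the kernel. The only cosmetic difference is that you evaluate $X^k-P_R(X)$ at the nodes and reuse the lower moments, whereas the paper compares coefficients in the remainder directly; this is the same computation.
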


\begin{proof}
For \(1\le i\le k\), let
\[
 L_i(X):=\frac{P_R(X)}{(X-x_i)P_R'(x_i)}.
\]
Then \(L_i(x_j)=\delta_{ij}\), and the coefficient of \(X^{k-1}\) in
\(L_i\) is \(w_i=P_R'(x_i)^{-1}\).  If \(0\le m\le k-1\), Lagrange
interpolation gives
\[
 X^m=\sum_{i=1}^k x_i^mL_i(X).
\]
Comparing coefficients of \(X^{k-1}\) yields
\[
 \sum_iw_ix_i^m=0\quad(0\le m\le k-2),
 \qquad
 \sum_iw_ix_i^{k-1}=1.
\]

For \(m=k\), write
\[
 P_R(X)=X^k-e_1X^{k-1}+\text{terms of degree at most }k-2,
 \qquad
 e_1=\sum_i x_i.
\]
Modulo \(P_R\), the polynomial \(X^k\) is therefore congruent to a
polynomial of degree at most \(k-1\) whose \(X^{k-1}\)-coefficient is
\(e_1\).  This remainder has the same values \(x_i^k\) at the interpolation
nodes.  Applying the preceding coefficient comparison to the remainder gives
\[
 \sum_iw_ix_i^k=e_1=\sum_i x_i.
\]

Finally consider the \((k-2)\times k\) moment matrix
\[
 M=(x_i^m)_{0\le m\le k-3,\ 1\le i\le k}.
\]
It has rank \(k-2\), because any \((k-2)\times(k-2)\) minor obtained from
distinct columns is a nonzero Vandermonde determinant.  Hence \(\ker M\) has
dimension two.  For any \(\lambda,\mu\in\F_p\), the vector
\[
 u_i=w_i(\lambda x_i+\mu)
\]
lies in this kernel, since for \(0\le m\le k-3\)
\[
 \sum_i u_ix_i^m
 =\lambda\sum_iw_ix_i^{m+1}+\mu\sum_iw_ix_i^m=0.
\]
The two parameters give a two-dimensional family: if
\(\lambda x_i+\mu=0\) for every \(i\), two distinct nodes force
\(\lambda=\mu=0\).  Therefore this family is the whole kernel.
\end{proof}

\begin{lemma}[Maximal first-jet order forces full degree]
\label{lem:maximal-order-full-degree}
Let the actual support have size $2\le k\le p-1$, and let
\[
 n=\left\lfloor\frac{k}{2}\right\rfloor
\]
be the first nonzero cyclotomic jet order.  Then
\begin{equation}
 \boxed{\deg Q_0=n.}
 \label{eq:maximal-order-full-degree}
\end{equation}
\end{lemma}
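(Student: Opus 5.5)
The plan is to sharpen the Vandermonde moment argument from the proof of Lemma~\ref{lem:first-nonzero-jet}: I will show that the homogeneous component of total degree \(n\) in \(Q_0=H_n\) is not zero. Part~(1) of Lemma~\ref{lem:first-nonzero-jet} already gives \(\deg H_n\le n\), so this yields \(\deg Q_0=n\).

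\emph{Setup.} As in Lemma~\ref{lem:cyclotomic-zero-rigorous}, I assume the coefficients have been brought to cyclotomic form and scaled so that \eqref{eq:primitive-c-rigorous} holds. In that case at least one constant term \(c_{x,0}\) is nonzero in \(\F_p\). Since \(k\ge2\), we have \(n\ge1\). Put
\[
 M_d:=\sum_{x\in R}c_{x,0}x^d\in\F_p .
\]

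\emph{Step 1: moments killed by the vanishing jets.} By hypothesis \(H_0=\cdots=H_{n-1}=0\) as polynomials. For each \(j\le n-1\), the degree-\(j\) homogeneous part \eqref{eq:top-homogeneous-rigorous} therefore vanishes. Expanding \((ax^2+bx)^j\) and using that \(j!\) and \(\binom jt\) are units in \(\F_p\) (because \(j<p\)) gives \(M_d=0\) for \(j\le d\le 2j\). The intervals \([j,2j]\) with \(0\le j\le n-1\) cover every \(d\) with \(0\le d\le 2n-2\).

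\emph{Step 2: the top part of \(Q_0\).} For \(j=n\), the terms of total degree \(n\) in \(H_n\) again come only from \(\ell=0\). They equal
\[
 \frac1{n!}\sum_{t=0}^{n}\binom nt a^t b^{\,n-t}M_{n+t}.
\]
Suppose \(\deg Q_0<n\). Then this form is zero, and since \(\binom nt\ne0\) in \(\F_p\), we get \(M_d=0\) for \(n\le d\le 2n\). Combined with Step~1, this gives \(M_d=0\) for \(0\le d\le 2n\).

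\emph{Step 3: the contradiction.} Because \(n=\lfloor k/2\rfloor\), we have \(2n\ge k-1\). So all moments \(M_0,\ldots,M_{k-1}\) vanish. The \(k\times k\) Vandermonde matrix on the distinct nodes of \(R\) is invertible, which forces \(c_{x,0}=0\) for every \(x\in R\). This contradicts primitivity, so \(\deg Q_0=n\).

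Two compatibility checks are needed. First, the top form is consistent with the shape \(Q_0=A(a)+bB(a)\) from Lemma~\ref{lem:schrodinger-jet}: the conclusion is then that \(\deg A=n\) or \(\deg B=n-1\). Second, the exponents \(d\le 2n\le k\le p-1\) stay below \(p\), so no Frobenius collapse of powers of \(x\) occurs.

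The one point needing care is the parity bookkeeping in Step~3. For even \(k=2n\) the moment window \(0\le d\le 2n\) exceeds the needed range by one. For odd \(k=2n+1\) it matches \(0\le d\le k-1\) exactly. In both cases it suffices, so no separate odd/even analysis should be required.
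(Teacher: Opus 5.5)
Your proposal is correct and follows the paper's own proof essentially line for line. The paper also extracts the vanishing moments $S_0,\dots,S_{2n-2}$ from $H_0=\cdots=H_{n-1}=0$, adds $S_n,\dots,S_{2n}$ from the assumption $\deg Q_0<n$, and concludes by the Vandermonde contradiction with primitivity, noting as you do that $2n\ge k-1$ for both parities of $k$.
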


\begin{proof}
The identities
\[
 H_0=\cdots=H_{n-1}=0
\]
give, from their highest homogeneous parts,
\[
 S_m:=\sum_{x\in R}c_{x,0}x^m=0
 \qquad(0\le m\le2n-2).
\]
If \(\deg Q_0<n\), then the highest homogeneous part of
\(H_n=Q_0\) also vanishes, giving
\[
 S_m=0
 \qquad(n\le m\le2n).
\]
Thus
\[
 S_0=\cdots=S_{2n}=0.
\]
If \(k=2n+1\), these are exactly \(k\) consecutive moments; if
\(k=2n\), the first \(k\) of them already suffice.  In either case
the Vandermonde matrix on the distinct support points forces every
\(c_{x,0}=0\), contradicting primitivity.
\end{proof}

\begin{lemma}[A full-degree leading jet cannot persist]
\label{lem:Q0-not-divide-Q1-universal}
Let \(n\ge2\) be the first nonzero cyclotomic jet, write
\[
 Q_0=A(a)+bB(a),
\]
and assume
\[
 \deg Q_0=n.
\]
Then
\begin{equation}
 \boxed{Q_0\nmid Q_1.}
 \label{eq:Q0-no-wholesale-persistence}
\end{equation}
\end{lemma}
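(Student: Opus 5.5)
The plan is to argue by contradiction, using only the recurrence \(\partial_b^2Q_1=\partial_aQ_0\) from Lemma~\ref{lem:schrodinger-jet} together with degree bookkeeping. First I check that \(Q_1=H_{n+1}\) is defined. Since \(n\le\lfloor k/2\rfloor\le(p-1)/2\) by Lemma~\ref{lem:first-nonzero-jet}, we have \(n+1\le(p+1)/2\le p-2\) for \(p\ge5\).

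Suppose \(Q_0\mid Q_1\) in \(\F_p[a,b]\), and write \(Q_1=L\,Q_0\) with \(L\in\F_p[a,b]\). The case \(Q_1=0\) is included by allowing \(L=0\). By Lemma~\ref{lem:first-nonzero-jet}, \(H_{n+1}\) has total degree at most \(n+1\). Since \(\deg Q_0=n\), it follows that \(\deg L\le1\), so
\[
 L=\lambda+\sigma a+\gamma b,\qquad \lambda,\sigma,\gamma\in\F_p .
\]

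Next I apply the Leibniz rule. Because \(\partial_b^2Q_0=0\) and \(\partial_b^2L=0\), we get
\[
 \partial_b^2Q_1=2\,\partial_bL\,\partial_bQ_0=2\gamma B(a).
\]
The recurrence gives \(\partial_b^2Q_1=\partial_aQ_0=A'(a)+bB'(a)\). Comparing coefficients of \(b^0\) and \(b^1\) yields
\[
 B'(a)=0,\qquad A'(a)=2\gamma B(a).
\]

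Now I use \(\deg B\le n-1<p\) and \(\deg A\le n<p\). In this range, vanishing of the formal derivative forces the polynomial to be constant, because no exponent is a multiple of \(p\). So \(B\) is constant. Then \(A'\) is constant, and since \(\deg A<p\), this gives \(\deg A\le1\). Consequently
\[
 \deg Q_0=\max\{\deg A,\,1+\deg B\}\le1<2\le n,
\]
which contradicts the hypothesis \(\deg Q_0=n\). Hence \(Q_0\nmid Q_1\).

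The only delicate point is justifying \(\deg L\le1\), and hence the Leibniz computation. This rests on the total-degree bound for \(H_{n+1}\) and on all relevant degrees being below \(p\), which is what makes "derivative zero implies constant" valid in characteristic \(p\). Both facts are already available from the earlier lemmas.
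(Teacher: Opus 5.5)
Your proof is correct and follows the paper's argument essentially verbatim: write \(Q_1=Q_0L\) with \(\deg L\le1\), apply \(\partial_b^2Q_1=\partial_aQ_0\) to obtain \(2\gamma B=A'+bB'\), and use that all degrees are below \(p\) to force \(\deg Q_0\le1\). Your explicit check that \(Q_1\) is available (\(n+1\le p-2\)) and the Leibniz-rule justification are small additions that the paper leaves implicit.
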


\begin{proof}
If \(Q_0\mid Q_1\), then
\[
 \deg(Q_1/Q_0)\le(n+1)-n=1.
\]
Thus
\[
 Q_1=Q_0(c_0+c_1a+db).
\]
The recurrence \(\partial_b^2Q_1=\partial_aQ_0\) gives
\[
 2dB=A'+bB'.
\]
Thus $B'=0$ and $A'$ is constant.  Since all degrees are below $p$, $B$ is constant and $\deg A\le1$, contradicting $\deg Q_0\ge2$.
\end{proof}

\paragraph{Canonical nonvertical factor.}
Whenever a leading pencil is written as \(Q_0=A(a)+bB(a)\) with
\(B\not\equiv0\), we use the canonical factorization
\[
 D=\gcd(A,B),
 \qquad
 Q_0=D(a)C(a,b).
\]
Then \(C=A_0(a)+bB_0(a)\) with \(\gcd(A_0,B_0)=1\) and
\(B_0\ne0\).  The coprimality remains true over
\(\overline{\F}_p[a]\): a common root over \(\overline{\F}_p\) would
produce a nonconstant common factor over \(\F_p\) by taking the product of
its Frobenius conjugates.  Hence \(C\), viewed as a polynomial of degree one
in \(b\) over \(\overline{\F}_p[a]\), is primitive and cannot have a
nontrivial factorization.  Thus \(C\) is absolutely irreducible.  We use
this fact when applying B\'ezout over \(\overline{\F}_p\).

\paragraph{A B\'ezout refinement used below.}
The following elementary projective correction will be used repeatedly in the low-support and component arguments.

\begin{lemma}[B\'ezout with forced intersection at infinity]
\label{lem:bezout-infinity}
Let coprime $P,Q\in\overline{\F}_p[a,b]$ have degrees $D,E$ and $b$-degrees at most $s<D$ and $t<E$.  Then the number of common affine zeros satisfies
\begin{equation}
 \#Z_{\mathrm{aff}}(P,Q)
 \le DE-(D-s)(E-t).
 \label{eq:bezout-infinity}
\end{equation}
\end{lemma}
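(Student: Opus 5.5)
The plan is to homogenize and apply the intersection-multiplicity form of Bézout in $\mathbb P^2(\overline{\F}_p)$, then show that the line at infinity $W=0$ absorbs a guaranteed amount of intersection. Write $P^h(a,b,W)$ and $Q^h(a,b,W)$ for the homogenizations. Coprimality of $P$ and $Q$ in $\overline{\F}_p[a,b]$ implies coprimality of $P^h$ and $Q^h$, since neither is divisible by $W$. Bézout then gives
\[
\sum_{\text{all } x\in\mathbb P^2} I_x(P^h,Q^h)=DE.
\]
Each affine common zero contributes at least $1$ to this sum. It therefore suffices to show that the points at infinity contribute at least $(D-s)(E-t)$.

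Next, locate the points at infinity. The top homogeneous part of $P$ is $P_D(a,b)$, and since $\deg_b P\le s$, every monomial of $P_D$ has $a$-exponent at least $D-s$, so $a^{D-s}\mid P_D$. The same argument gives $a^{E-t}\mid Q_E$. Hence both curves pass through the point $[a:b:W]=[0:1:0]$, which is the common point at infinity of the vertical lines.

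The key step is the lower bound
\[
I_{[0:1:0]}(P^h,Q^h)\ge (D-s)(E-t).
\]
To prove it, dehomogenize at $b=1$, using local coordinates $(a,W)$ at that point. Then
\[
\tilde P(a,W)=P^h(a,1,W)=\sum_{i+j\le D,\ j\le s} p_{ij}\,a^iW^{D-i-j}.
\]
Every monomial here has total degree $D-j\ge D-s$ in $(a,W)$. So $\tilde P\in\mathfrak m^{D-s}$, meaning the multiplicity of $P^h$ at this point is at least $D-s$. Likewise the multiplicity of $\tilde Q$ is at least $E-t$. The standard inequality $I_x(F,G)\ge m_x(F)m_x(G)$ for curves without a common component through $x$ then yields the bound.

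Combining these gives
\[
\#Z_{\mathrm{aff}}\le DE-\sum_{x\in\text{line at }\infty}I_x\le DE-(D-s)(E-t).
\]

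The main obstacle is bookkeeping rather than depth. One must confirm that homogenization preserves coprimality, and that the hypotheses $s<D$ and $t<E$ are exactly what put $[0:1:0]$ on both curves. I would invoke the multiplicity-product inequality from Fulton's treatment of intersection numbers rather than reprove it. If a self-contained route is preferred, one can instead use the resultant in $b$: one would bound the $a$-degree of $\mathrm{Res}_b(P,Q)$ via the Sylvester matrix, whose size is governed by $s$ and $t$. That route, however, only counts distinct $a$-coordinates, so the intersection-multiplicity argument is cleaner.
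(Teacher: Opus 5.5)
Your proposal is correct and follows essentially the same route as the paper: homogenize, note that the $b$-degree bounds force both curves through $[0:1:0]$ with multiplicities at least $D-s$ and $E-t$ (read off in the local chart $b=1$), apply the inequality $I\ge m_P m_Q$ from Fulton, and subtract this contribution from the B\'ezout total $DE$. Your explicit check that homogenization preserves coprimality is a detail the paper leaves implicit, but it does not change the argument.
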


\begin{proof}
Let \(P^h(A,B,Z)\) and \(Q^h(A,B,Z)\) be the homogenizations of \(P\) and
\(Q\).  Because the \(b\)-degrees satisfy \(s<D\) and \(t<E\), both
projective curves pass through
\[
 P_\infty=[0:1:0].
\]
Work in the affine chart \(B=1\) around this point, with local coordinates
\[
 u=A/B,
 \qquad
 v=Z/B.
\]
A monomial \(a^ib^j\) of \(P\), after homogenization to total degree \(D\),
becomes \(A^iB^jZ^{D-i-j}\), hence locally becomes
\[
 u^iv^{D-i-j}.
\]
Its local total degree is \(D-j\ge D-s\).  Therefore the multiplicity of
\(P^h=0\) at \(P_\infty\) is at least \(D-s\).  The same argument gives
multiplicity at least \(E-t\) for \(Q^h=0\).

Since \(P\) and \(Q\) are coprime over \(\overline{\F}_p\), their
homogenizations have no common projective component.  The local intersection
inequality gives
\[
 I_{P_\infty}(P^h,Q^h)\ge(D-s)(E-t)
\]
\cite[Sec.~3.3]{FultonCurves}.  Projective B\'ezout gives total intersection
multiplicity \(DE\).  Every distinct affine common zero contributes local
intersection multiplicity at least one, so the number of affine common zeros
is at most
\[
 DE-I_{P_\infty}(P^h,Q^h)
 \le DE-(D-s)(E-t).
\]
\end{proof}

\begin{proposition}[Four-point threshold]
\label{prop:four-point-large-zero-structure}
If $|R|=4$ and $|Z(F)|>6$, then there is a nonvertical affine line $\ell$ such that
\[
 F|_\ell\equiv0,
 \qquad
 |Z(F)\setminus\ell|\le2.
\]
\end{proposition}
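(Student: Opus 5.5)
We would prove Proposition~\ref{prop:four-point-large-zero-structure} by following the architecture of Theorem~\ref{thm:parabolic-zero-rigorous}, with an additional use of the second jet. Take $|R|=4$ and assume $|Z(F)|>6$.

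\textbf{Reductions.} First we apply Lemma~\ref{lem:complex-to-cyclotomic}.

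If $r_Z\le 2$, there is a nonzero $c'$ on an actual support $R'$ with $|R'|\le3$ that still vanishes on all of $Z(F)$. The possibilities are then bounded as follows.
\begin{itemize}
\item $|R'|=1$: no zeros (Theorem~\ref{thm:parabolic-zero-rigorous}).
\item $|R'|=2$: at most $p$ zeros. More importantly, the zero set of a two-point sum is an explicit union of parallel affine lines $a(x_1^2-x_2^2)+b(x_1-x_2)=\text{const}$. This case needs separate treatment, described below.
\item $|R'|=3$: at most $2$ zeros by Theorem~\ref{thm:three-point-exact}, contradicting $|Z(F)|>6$.
\end{itemize}
In the $|R'|=2$ case, $Z(F)$ lies on a family of parallel nonvertical lines. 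By Lemma~\ref{lem:affine-line-zero-bound}, on each such line $\ell$ either $F|_\ell\equiv0$ or $|Z(F)\cap\ell|\le3$. We expect to show that at most one of these lines can carry the identically-zero alternative. The reason is that a zero line forces the pairing $R=\{c\pm\rho_1,c\pm\rho_2\}$ with antisymmetric rephased coefficients, and two distinct parallel lines would force two incompatible pairings. The remaining lines then contribute few points. This subcase may need care.

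So the main case is $r_Z=3$. Here $c$ is cyclotomic, we normalize it, and $n\le2$ by Lemma~\ref{lem:first-nonzero-jet}. If $n\le1$, then $Q_0=A(a)$ with $\deg A\le1$ (since $B$ has degree $\le n-1$), and the fiber argument gives at most $3$ zeros. So $n=2$, and by Lemma~\ref{lem:maximal-order-full-degree}, $\deg Q_0=2$.

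\textbf{Case $n=2$.} Write $Q_0=A(a)+bB(a)$ with $\deg A\le2$ and $\deg B\le1$, and split according to whether $Q_0$ and $Q_1$ share a factor.

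\emph{No common factor.} Then $Z(F)\subseteq Z_{\mathrm{aff}}(Q_0,Q_1)$. We have $\deg Q_0=2$ with $b$-degree $s\le1$, and $\deg Q_1\le3$ with $b$-degree $t\le3$ by Lemma~\ref{lem:schrodinger-jet}. When $t<\deg Q_1$, Lemma~\ref{lem:bezout-infinity} gives at most $2\cdot3-(2-1)(3-t)$ common zeros, which is $\le5$ for $t\le2$. Otherwise Bézout gives at most $6$. In either case this contradicts $|Z(F)|>6$.

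\emph{Common factor.} By Lemma~\ref{lem:Q0-not-divide-Q1-universal}, $Q_0\nmid Q_1$, so the common factor is a proper factor of the quadratic $Q_0$, hence linear.
\begin{itemize}
\item If $B\not\equiv0$ and the common factor is the canonical $C=A_0+bB_0$ with $\deg C=1$, then $C$ defines a nonvertical line $\ell$ (or $B_0$ constant and $D$ linear). Write $Q_0=DC$ and $Q_1=CE$ with $\deg E\le2$. Zeros off $\ell$ lie in $Z(D,E)$, plus vertical-fiber zeros controlled by the multiplicity argument of Lemma~\ref{lem:cyclotomic-zero-rigorous}. A line-conic Bézout count gives at most $2$ such points.
\item If the common factor is a vertical factor $a-\alpha$, the vertical line carries at most $3$ zeros by Lemma~\ref{lem:affine-line-zero-bound}. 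The rest lie on $Z(C,Q_1/(a-\alpha))$, which we bound by Bézout as $\le3$. That total is $6$, again a contradiction.
\end{itemize}

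\textbf{Forcing the line to vanish identically.} In the nonvertical-factor case we know $|Z(F)\setminus\ell|\le2$, so $|Z(F)\cap\ell|\ge5>3$. By Lemma~\ref{lem:affine-line-zero-bound}, a nonvertical line that is not identically zero carries at most $3$ zeros, so $F|_\ell\equiv0$. This gives the proposition.

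\textbf{Main obstacle.} We expect the hardest step to be the common-factor case when $C$ is nonvertical. Bounding $|Z(F)\setminus\ell|\le2$ requires controlling $Q_1/C$ using the recurrence $\partial_b^2Q_1=\partial_aQ_0$. First we would normalize $\ell$ to $b=0$ via Lemma~\ref{lem:parabolic-affine-symmetry}, so that $C=b$. Then we would compute the allowed forms of $Q_0$ and $Q_1$ explicitly and, if needed, invoke $Q_2$ to cut the residual set down to two points.
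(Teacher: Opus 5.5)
The core of your argument matches the paper's. You do the rank reduction first. In the full-rank branch you get $n\le2$, $\deg Q_0=2$ and $Q_0\nmid Q_1$, use B\'ezout when $Q_0,Q_1$ are coprime, and find at most two zeros off a shared nonvertical linear factor; Lemma~\ref{lem:affine-line-zero-bound} then makes that factor an exact zero line. Citing Lemma~\ref{lem:maximal-order-full-degree} instead of the paper's barycentric computation $Q_0^{[2]}=\lambda ab$ is fine. However, two steps fail as written.

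First, take the branch $r_Z\le2$ with reduced support two. You describe $Z(F')$ as a union of parallel lines and leave the case open. Your plan is to show at most one line is an exact zero line and that ``the remaining lines contribute few points.'' That would not by itself give $|Z(F)\setminus\ell|\le2$, since each remaining nonvertical line may carry up to three zeros. The missing observation is that a two-point sum vanishes on at most one line. The equation $c'_1\omega^{ax_1^2+bx_1}+c'_2\omega^{ax_2^2+bx_2}=0$ says $\omega^{a(x_1^2-x_2^2)+b(x_1-x_2)}=-c'_2/c'_1$. Since $L\mapsto\omega^L$ is injective on $\F_p$, the linear form is pinned to a single residue, or there are no zeros. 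Hence $Z(F)\subseteq\ell$ for one nonvertical line. More than three zeros then force $F|_\ell\equiv0$ by Lemma~\ref{lem:affine-line-zero-bound}, and $Z(F)\setminus\ell=\emptyset$. This is how the paper closes that branch.

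Second, for $n=1$ the bound $\deg B\le n-1=0$ only says $B$ is constant, not $B\equiv0$. So your claim ``$Q_0=A(a)$, at most $3$ zeros'' is unjustified. If $B$ is a nonzero constant, $Q_0=0$ is a nonvertical line containing every zero, and you must again use Lemma~\ref{lem:affine-line-zero-bound}. Either $F$ vanishes identically on it, which gives the conclusion with no zeros off the line, or it carries at most three zeros.

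Finally, your line--conic count in the shared-nonvertical-factor case needs $D\nmid E$, where $Q_1=CE$. This holds because $D\mid E$ would give $Q_0=DC\mid Q_1$, contrary to Lemma~\ref{lem:Q0-not-divide-Q1-universal}. With that remark, the ``main obstacle'' you anticipate disappears: you need neither normalization to $b=0$ nor $Q_2$.
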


\begin{proof}
Let $r_Z$ be the zero-row rank.  If $r_Z\le2$, a coordinate-hyperplane cut gives a nonzero vector on at most three support points preserving all zeros.  Theorem~\ref{thm:three-point-exact} forces actual support two, whose zero set is one affine line; more than three zeros exclude a vertical line by Lemma~\ref{lem:affine-line-zero-bound}, and hence the original four-point restriction vanishes identically there.

Assume $r_Z=3$.  After cyclotomic reduction the first jet order satisfies $n\le2$.  The cases $n=0,1$ are immediate from $Q_0$.  For $n=2$, use Lemma~\ref{lem:parabolic-affine-symmetry} to translate $R$ so that $\sum_{x\in R}x=0$.  From $H_0=H_1=0$ and Lemma~\ref{lem:barycentric-moments}, the primitive residue vector is $\bar c_{x_i}=\lambda w_i$, whence
\[
 Q_0^{[2]}=\lambda ab.
\]
Thus $\deg Q_0=2$, and Lemma~\ref{lem:Q0-not-divide-Q1-universal} gives $Q_0\nmid Q_1$.  If $Q_0,Q_1$ are coprime, B\'ezout gives at most six zeros.  Otherwise they share exactly one linear factor.  If it were not Frobenius-stable, its distinct conjugate would also divide both polynomials, forcing the quadratic $Q_0$ to divide $Q_1$; hence the common line is defined over $\F_p$.  After factoring it out, the residual degrees are $1$ and at most $2$, so there are at most two zeros off the line.  More than six total zeros then give at least five on the line, which by Lemma~\ref{lem:affine-line-zero-bound} forces exact vanishing there.
\end{proof}

\begin{theorem}[Uniform five-point bound]
\label{thm:five-point-six-zero}
For every prime $p\ge7$,
\begin{equation}
 |R|=5\qquad\Longrightarrow\qquad\boxed{|Z(F)|\le6.}
 \label{eq:five-point-six}
\end{equation}
\end{theorem}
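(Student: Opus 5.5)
We argue by the same rank dichotomy as in Theorem~\ref{thm:parabolic-zero-rigorous}, and then exploit the jet structure at the maximal first-jet order.

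\textbf{Low-rank branch.} Let $Z=Z(F)$ and let $r_Z$ be the rank over $K$ of the zero-row matrix. If $r_Z\le3$, Lemma~\ref{lem:complex-to-cyclotomic} produces a nonzero vector $c'$ with actual support $R'\subset R$, $|R'|\le4$, satisfying every zero equation, so $Z(F)\subseteq Z(F')$. We treat the possible sizes of $R'$ separately.
\begin{itemize}
\item If $|R'|=1$, there are no zeros.
\item If $|R'|=3$, Theorem~\ref{thm:three-point-exact} gives $|Z(F)|\le2$.
\item If $|R'|=2$, then $Z(F')$ is a single affine line. A vertical line carries at most $4$ zeros of $F$ by Lemma~\ref{lem:affine-line-zero-bound}. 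On a nonvertical line, $F$ cannot vanish identically since $|R|=5$ is odd, so \eqref{eq:nonvertical-line-sharp} gives at most $4$ zeros.
\item If $|R'|=4$ and $|Z(F)|>6$, then Proposition~\ref{prop:four-point-large-zero-structure} places all but at most $2$ zeros on a nonvertical line $\ell$. Since $F|_\ell\not\equiv0$ (odd support), $\ell$ carries at most $4$ zeros of $F$, so $|Z(F)|\le 6$, a contradiction.
\end{itemize}
Hence we may assume $r_Z=4$, so that $c$ is cyclotomic up to scaling.

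\textbf{Full-rank branch.} Normalize $c$ as in \eqref{eq:primitive-c-rigorous}. Lemma~\ref{lem:first-nonzero-jet} gives $n\le2$.
\begin{itemize}
\item If $n=0$, there are no zeros.
\item If $n=1$, then $Q_0=A(a)+bB$ with $\deg A\le1$ and $B$ constant. If $B\ne0$, the zeros lie on one nonvertical line, giving at most $4$. If $B=0$, they lie on one vertical fiber, giving at most $4$.
\item If $n=2$, Lemma~\ref{lem:maximal-order-full-degree} gives $\deg Q_0=2$, and Lemma~\ref{lem:Q0-not-divide-Q1-universal} gives $Q_0\nmid Q_1$. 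Here $Q_1$ has total degree $\le3$ and $b$-degree $\le3$, while $Q_0$ has $b$-degree $\le1<2$.
\end{itemize}

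\textbf{Coprime case ($n=2$).} If $Q_0$ and $Q_1$ are coprime, we apply Lemma~\ref{lem:bezout-infinity} with $D=2$, $s=1$, $E=3$. Whenever $\deg_bQ_1\le2$ this gives at most $6-1\cdot1=5$ zeros. When $\deg_bQ_1=3$, plain B\'ezout only gives $6$, which still suffices. Thus every exact zero, being a common zero of $Q_0$ and $Q_1$ by Lemma~\ref{lem:exact-zero-all-jets}, falls within the bound.

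\textbf{Common-factor case ($n=2$).} Otherwise $Q_0$ and $Q_1$ share exactly one linear factor $L$. As in Proposition~\ref{prop:four-point-large-zero-structure}, $L$ is Frobenius-stable, hence defined over $\F_p$; otherwise $Q_0\mid Q_1$.
\begin{itemize}
\item The residual factors $Q_0/L$ (degree $1$) and $Q_1/L$ (degree $\le2$) meet in at most $2$ points off $L$.
\item On $L$ itself we bound zeros of $F$ directly. If $L$ is vertical, Lemma~\ref{lem:affine-line-zero-bound} gives at most $4$. If $L$ is nonvertical, odd support gives $F|_L\not\equiv0$ and so at most $4$.
\end{itemize}
The total is therefore at most $6$.

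\textbf{Main obstacle.} The delicate point is the coprime case when $\deg_bQ_1=3$ interacts with the degenerate shapes of $Q_0$. For instance, if $B\equiv0$ then $Q_0=A(a)$ is a product of vertical lines; vertical common factors then need the fiber argument, using \eqref{eq:b-degree-rigorous} and the recurrence. I would first try the fiber count from Lemma~\ref{lem:cyclotomic-zero-rigorous}, Case~2, with $\deg A=2$: this yields at most $\sum(2m_\alpha+1)\le6$ on vertical fibers. Combining it with the B\'ezout count should close all subcases. The hypothesis $p\ge7$ ensures $n+m_\alpha\le4\le p-2$, so the needed jets $Q_{m_\alpha}$ are defined.
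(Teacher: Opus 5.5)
Your proposal is correct and follows the paper's proof essentially step for step: rank reduction via Theorem~\ref{thm:three-point-exact}, Proposition~\ref{prop:four-point-large-zero-structure} and the odd line bound, then $n\le2$, $\deg Q_0=2$, $Q_0\nmid Q_1$, B\'ezout in the coprime case, and otherwise a Frobenius-rational common line carrying at most four zeros plus at most two residual points. The only difference is cosmetic: you get $\deg Q_0=2$ from Lemma~\ref{lem:maximal-order-full-degree}, whereas the paper uses a barycentric computation after translating to $\sum R=0$. The ``main obstacle'' in your last paragraph is not a real one: plain B\'ezout bounds coprime $Q_0,Q_1$ by $2\cdot3=6$ whatever the shape of $Q_0$ (including $B\equiv0$), and your own line argument already covers a vertical common factor, so no additional fiber count is needed.
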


\begin{proof}
Assume $|Z(F)|\ge7$.  If $r_Z\le2$, two coordinate cuts reduce to support at most three, contradicting Theorem~\ref{thm:three-point-exact} and the odd line bound.  If $r_Z=3$, one cut gives support at most four; Proposition~\ref{prop:four-point-large-zero-structure} then places at least five original zeros on one affine line, again contradicting \eqref{eq:odd-line-bound}.

Let $r_Z=4$.  After cyclotomic reduction, $n\le2$.  The cases $n=0,1$ are immediate.  For $n=2$, use Lemma~\ref{lem:parabolic-affine-symmetry} to translate so that $\sum R=0$.  The residue moment equations through degree $2$ and Lemma~\ref{lem:barycentric-moments} give
\[
 \bar c_{x_i}=w_i(ux_i+v),\qquad (u,v)\ne(0,0),
\]
and hence
\[
 Q_0^{[2]}=uab+\frac v2a^2\ne0.
\]
Therefore $\deg Q_0=2$, and Lemma~\ref{lem:Q0-not-divide-Q1-universal} applies.  Coprime $Q_0,Q_1$ have at most six common zeros by B\'ezout.  Otherwise they share a linear factor \(L\) over
\(\overline{\F}_p\).  Because both \(Q_0\) and \(Q_1\) have coefficients
in \(\F_p\), every Frobenius conjugate of \(L\) is also a common factor.
If \(L\) were not defined over \(\F_p\), its distinct conjugate would give
two different linear factors of the quadratic \(Q_0\); their product would
therefore be a nonzero scalar multiple of \(Q_0\), forcing \(Q_0\mid Q_1\),
contrary to Lemma~\ref{lem:Q0-not-divide-Q1-universal}.  Hence the common
line is defined over \(\F_p\).  After removing it, the remaining factors
have degrees one and at most two and are coprime, so B\'ezout gives at most
two common points off the line.  The original five-point sum cannot vanish
identically on a nonvertical line because its support size is odd; by
\eqref{eq:odd-line-bound} it has at most four exact zeros on the common
line.  Thus \(|Z(F)|\le4+2=6\).
\end{proof}

\begin{proposition}[Five-point sharpness]
\label{prop:five-point-sharpness}
The constant in Theorem~\ref{thm:five-point-six-zero} is sharp.  In fact,
for $p=11$ there is a five-point parabolic sum with exactly six zeros.
\end{proposition}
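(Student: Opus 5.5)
This is an existence statement, so I will prove it with an explicit witness for \(p=11\) whose exactly six zeros are verified by exact arithmetic in \(K=\Q(\omega)\).

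\textbf{Locating a candidate.} The proof of Theorem~\ref{thm:five-point-six-zero} shows where to look. An extremal sum must lie in the full-rank branch \(r_Z=4\) with first jet order \(n=2\). Its six zeros are then either the six transverse intersections of two coprime conics \(Q_0,Q_1\), or four zeros on one \(\F_p\)-line together with two more off it. I would aim for the second configuration, since it is easier to force.

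\textbf{Normal form.} By Lemma~\ref{lem:parabolic-affine-symmetry}, the common line may be taken to be \(b=0\), and the four prescribed zeros on it may be placed at \(a\in\{a_1,\dots,a_4\}\). Grouping the support into reflection pairs \(\{x,-x\}\) gives, on \(b=0\),
\[
 F(a,0)=\sum_t g(t)\,\omega^{at}.
\]
Take \(R=\{0,\pm x_1,\pm x_2\}\). Then \(|\supp g|=3\), so by Tao's theorem \(F(\cdot,0)\) has at most two zeros. This choice is therefore too symmetric.

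\textbf{Revised choice.} Instead choose \(R\) with all five values of \(h(x)=x^2\) distinct on the line, and require \(g\) to have support five. Then \(F(\cdot,0)\) can have up to four zeros. I then impose six linear conditions \(F(a_i,b_i)=0\) on the five unknowns \(c_x\): four points on \(b=0\) and two points off the line. A nonzero kernel exists exactly when the \(6\times5\) matrix
\[
 \bigl(\omega^{a_ix^2+b_ix}\bigr)
\]
has rank \(4\). Lemma~\ref{lem:complex-to-cyclotomic} then puts the kernel vector in \(K^R\). By the primitive-root structure of \(\Q(\omega)\), each \(5\times5\) minor is an element of \(\Z[\omega]\) with an explicit integer coordinate vector.

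\textbf{Search and verification.} Over all choices of \(R\) and of the six points (up to the symmetries), I would search for a configuration where every \(5\times5\) minor vanishes, while the kernel vector has all five coordinates nonzero. For such a vector I would evaluate \(F\) at all \(121\) points and confirm that exactly six vanish. Exact vanishing is certified by reducing \(\sum_x c_x\omega^{\cdots}\) to coordinates in the basis \(1,\omega,\dots,\omega^{9}\). The zero count cannot exceed six, by Theorem~\ref{thm:five-point-six-zero}. The final write-up would just state \(R\), the coefficients \(c\), and the six zeros, and leave the \(121\) evaluations as a routine finite check.

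\textbf{Main obstacle.} The hard step is actually finding the witness. A generic system of six conditions in five unknowns has no nonzero solution, so the degeneracy must come from arithmetic coincidences among the exponents modulo \(11\). I expect to need a computer search rather than a conceptual construction. If the line-plus-two configuration fails to produce one, I would fall back to the coprime-conic configuration, seeking six zeros on \(Q_0\cap Q_1\).
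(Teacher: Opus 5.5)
Your proposal is a search plan rather than a proof, and that is the whole gap. The proposition is a pure existence claim, so its content is a certified witness. You exhibit no support $R$, no six points and no coefficient vector, and nothing in your argument guarantees that the search succeeds. As you note yourself, six linear conditions on five unknowns generically admit only $c=0$. The rank drop of the $6\times5$ matrix is therefore an arithmetic coincidence modulo $11$. Without an explicit configuration, or a reason such a coincidence must occur, the existence of a five-point sum with six zeros is not established. Your preferred configuration is also unsupported. The paper's witness has zeros $(1,4),(3,0),(6,0),(7,1),(8,3),(9,4)$, no three of which are collinear, so it is of the coprime type. You give no evidence that the line-plus-two type occurs for $p=11$ at all.

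The paper supplies the missing step as follows. Take $R=\{0,1,2,3,4\}$, prescribe the four zeros $(1,4),(3,0),(6,0),(7,1)$, and let $c$ be a unit multiple of the signed maximal-minor vector of the $4\times5$ evaluation matrix; the cofactor identity then gives these zeros. Each $c_x$ is nonzero, because its reduced representative is a nonzero polynomial in $\zeta$ of degree at most $9<\deg\Phi_{11}$, so the actual support is all of $R$. Finally, $F(8,3)$ and $F(9,4)$ vanish because each is written as $\Phi_{11}(\zeta)$ times an explicit polynomial. Only then does Theorem~\ref{thm:five-point-six-zero} turn ``at least six'' into ``exactly six''.

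Two of your heuristic reductions are also off, though they become irrelevant once a witness is written down:
\begin{enumerate}
\item In the coprime branch the six zeros lie on $Q_0\cap Q_1$ with $\deg Q_0=2$ and $\deg Q_1\le3$. Two conics meet in at most four points and could not carry six zeros.
\item The proof of Theorem~\ref{thm:five-point-six-zero} rules out $r_Z=3$ only under $|Z(F)|\ge7$. It therefore does not confine an example with exactly six zeros to the full-rank branch.
\end{enumerate}
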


\begin{proof}
Let $\zeta=e^{2\pi i/11}$, let
\[
 R=\{0,1,2,3,4\},
\]
and consider the four points
\[
 y_1=(1,4),\qquad y_2=(3,0),\qquad
 y_3=(6,0),\qquad y_4=(7,1).
\]
Form the $4\times5$ cyclotomic evaluation matrix
\[
 A=\bigl(\zeta^{a_i x^2+b_i x}\bigr)_
 {\substack{1\le i\le4\\0\le x\le4}},
 \qquad y_i=(a_i,b_i),
\]
and let $A^{(x)}$ denote the matrix obtained by deleting the column indexed
by $x$.  The signed maximal-minor vector
\[
 c_x=\zeta^{10}(-1)^x\det A^{(x)},\qquad 0\le x\le4,
\]
lies in $\ker A$.  Reducing modulo
\[
 \Phi_{11}(X)=1+X+\cdots+X^{10}
\]
gives the following representatives:
\begin{align*}
 c_0&=\zeta^9-3\zeta^6+2\zeta^5-\zeta+1,\\
 c_1&=\zeta^9+\zeta^8-\zeta^5+3\zeta^4+\zeta^3
       +2\zeta^2+2\zeta+2,\\
 c_2&=3\zeta^9-2\zeta^8+\zeta^7-\zeta^6-\zeta^3+2\zeta-2,\\
 c_3&=3\zeta^7+\zeta^5+\zeta^4+2\zeta^3+3\zeta^2+2\zeta-1,\\
 c_4&=\zeta^8-2\zeta^6+2\zeta^5-3\zeta^3+2\zeta^2-\zeta+1.
\end{align*}
Each is nonzero, since it is represented by a nonzero polynomial of degree
at most nine whereas $\Phi_{11}$ has degree ten.  Hence the actual support
is all five points of $R$.  The cofactor identity gives
\[
 F(y_i)=0\qquad(1\le i\le4).
\]
For the two further points one obtains, by direct polynomial reduction,
\begin{align*}
 F(8,3)
 &=\Phi_{11}(\zeta)
   (\zeta^6-\zeta^5+\zeta^4-\zeta^3+\zeta^2-2\zeta+3)=0,\\
 F(9,4)
 &=\Phi_{11}(\zeta)
   (\zeta^4-\zeta^3+\zeta^2+2\zeta-1)=0.
\end{align*}
Thus $|Z(F)|\ge6$.  Theorem~\ref{thm:five-point-six-zero} gives the reverse
inequality, so $|Z(F)|=6$.
\end{proof}

\begin{proposition}[Six-point threshold at ten zeros]
\label{prop:six-point-ten-threshold}
Let \(p\ge7\) and suppose \(|R|=6\).  If
\[
 |Z(F)|>10,
\]
then there exists a nonvertical affine line \(\ell\) such that
\begin{equation}
 F|_\ell\equiv0,
 \qquad
 \boxed{|Z(F)\setminus\ell|\le4.}
 \label{eq:six-point-ten-structure}
\end{equation}
\end{proposition}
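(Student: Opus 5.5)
We follow the template of Proposition~\ref{prop:four-point-large-zero-structure} and Theorem~\ref{thm:five-point-six-zero}: split according to the zero-row rank \(r_Z\) supplied by Lemma~\ref{lem:complex-to-cyclotomic}.

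\textbf{Low-rank branches.} Assume \(|Z(F)|\ge11\).

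If \(r_Z\le4\), coordinate-hyperplane cuts produce a nonzero \(c'\) vanishing on \(Z(F)\) with actual support of size at most five.

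\emph{Odd support \(1,3,5\).} By Theorem~\ref{thm:three-point-exact} and Theorem~\ref{thm:five-point-six-zero} such a \(c'\) has at most \(6<11\) zeros. This is impossible.

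\emph{Support \(2\).} The zero set of a two-point sum is contained in an affine line. Since it contains at least \(11>1\) points, the line is not vertical by Lemma~\ref{lem:affine-line-zero-bound}. Hence the original \(F\) has at least \(11>(p-1)/2\)... Here care is needed for large \(p\), because \eqref{eq:nonvertical-line-sharp} caps nonzero restrictions by \(k-1=5\). So \(11>5\) zeros on a nonvertical line force \(F|_\ell\equiv0\), and \(Z(F)\setminus\ell=\varnothing\).

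\emph{Support \(4\).} Since \(11>6\), Proposition~\ref{prop:four-point-large-zero-structure} yields a line \(\ell\) with at most two zeros off it. So \(\ell\) carries at least \(9>5\) zeros of \(F\), and again \(F|_\ell\equiv0\) by Lemma~\ref{lem:affine-line-zero-bound}. Then \(|Z(F)\setminus\ell|\le2\).

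\textbf{Full-rank branch, \(r_Z=5\).} Here the coefficients are cyclotomic, so we normalize and get \(n\le3\).

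\emph{Cases \(n\le2\).} The argument of Lemma~\ref{lem:cyclotomic-zero-rigorous} applies. If \(B\not\equiv0\), vertical fibers give \(p+2n-2\) only. That is too weak, so instead we intersect \(Q_0\) with \(Q_1\) via B\'ezout.

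For \(n\le1\) the bound is immediate: \(Q_0\) is a nonzero constant, or is linear, so all zeros lie on one line. If that line is vertical, Lemma~\ref{lem:affine-line-zero-bound} allows at most five zeros. If it is nonvertical, then either \(F|_\ell\equiv0\) with nothing off the line, or there are at most five zeros.

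For \(n=2\) we argue as in the five-point proof. Translating so that \(\sum R=0\), the barycentric moments (Lemma~\ref{lem:barycentric-moments}) show \(\deg Q_0=2\), possibly after checking; otherwise \(\deg Q_0\le1\) reduces to the linear case. Then \(Q_0\nmid Q_1\). Coprime \(Q_0,Q_1\) give at most \(6\) zeros. A common \(\F_p\)-line leaves at most \(2\) zeros off it, so more than \(10\) zeros force at least \(9\) on the line and hence exact vanishing there.

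\emph{Case \(n=3\).} By Lemma~\ref{lem:maximal-order-full-degree}, \(\deg Q_0=3\), \(\deg Q_1\le4\) and \(\deg_b Q_0\le1\), \(\deg_b Q_1\le3\). If \(Q_0,Q_1\) are coprime, Lemma~\ref{lem:bezout-infinity} with \(D=3,s=1,E\le4,t=3\) gives at most \(12-2=10\) zeros. (If \(\deg Q_1<4\), plain B\'ezout gives at most \(9\).)

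Otherwise write \(Q_0=D(a)C\) with \(C\) absolutely irreducible. A common factor of \(Q_0,Q_1\) is either a vertical factor \(a-\alpha\) or the component \(C\); it cannot be all of \(Q_0\) by Lemma~\ref{lem:Q0-not-divide-Q1-universal}.

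\emph{Vertical common factors.} These contribute at most \(5\) zeros per vertical line by Lemma~\ref{lem:affine-line-zero-bound}. They are combined with B\'ezout bounds on the residual pair, using the refinement with \(b\)-degrees, e.g.\ residual degrees \((2,\le3)\) with \(b\)-degrees \((1,3)\) give \(\le 6-1\cdot0\). That is too weak, so here we would sharpen using the recurrence \(\partial_b^2Q_1=\partial_aQ_0\) restricted to \(a=\alpha\), as in the fiber bound \(2m_\alpha+1\).

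\emph{Common component \(C\).} This is the case producing the line. We show \(\deg C=1\): if \(\deg C\ge2\), then \(\deg D\le1\), and the recurrence \(\partial_b^2Q_1=\partial_aQ_0\) with \(C\mid Q_1\) gives a contradiction similar to Lemma~\ref{lem:Q0-not-divide-Q1-universal}. So \(C\) is a nonvertical \(\F_p\)-line \(\ell\). Off \(\ell\), the residual pair \(D(a)\) (degree \(\le2\), vertical fibers) and \(Q_1/C\) (degree \(\le3\)) meet in at most \(2\cdot2=4\) points after using the fiber bound, which gives the boxed \(4\). Then \(\ell\) carries at least \(7>5\) zeros, forcing \(F|_\ell\equiv0\).

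I expect this last step to be the main obstacle: excluding a conic common component, and pinning the off-line count at exactly \(4\) rather than a cruder B\'ezout number. I would try the recurrence-based persistence argument first, and fall back on examining \(Q_2\) if necessary.
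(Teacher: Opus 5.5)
Your rank split, the low-rank branch, the cases $n\le 2$, and the coprime $n=3$ case via Lemma~\ref{lem:bezout-infinity} are sound and essentially match the paper. For $n=2$, your gcd-of-$Q_0,Q_1$ argument is a valid variant of the paper's $D\cdot C$ fiber count. What carries it at $k=6$ is the dichotomy $\deg Q_0=2$ versus $\deg Q_0\le1$, not Lemma~\ref{lem:barycentric-moments}.

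The genuine gap is the non-coprime $n=3$ branch. There the bound $10$ is reached by the estimates themselves, and nothing is actually proved.

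\textbf{The double-root line case.} Your claim that a common factor of $Q_0=D(a)C$ and $Q_1$ is either $a-\alpha$ or $C$ fails when $D=(a-\alpha)^2$ and $C$ is a line. Normalize $C$ to $b=0$. The recurrence gives $Q_1=b\bigl(V(a)+\tfrac{D'(a)}6b^2\bigr)$, and $D'(\alpha)=0$. So if $V(\alpha)=0$, then $Q_1(\alpha,\cdot)\equiv0$, the gcd is $(a-\alpha)C$, and your count ``$2\cdot2=4$'' gives no bound at all on that fiber. You anticipated needing $Q_2$, and that is the fix. Here $Q_2(\alpha,b)=P_0+P_1b+\tfrac{V'(\alpha)}6b^3+\tfrac1{60}b^5$ is a nonzero quintic, so the fiber carries at most $5$ zeros, and $|Z(F)|\le5+5$ unless $F|_\ell\equiv0$. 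Only \emph{after} exact vanishing is forced is $(\alpha,0)$ known to be one of those five roots, leaving at most $4$ off $\ell$. Your order of deduction (first $\le4$ off $\ell$, then $\ge7$ on it) is circular in exactly this subcase.

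\textbf{The case $C\nmid Q_1$.} This is still only a plan. The accounting that works is B\'ezout of $C$ against $Q_1$ plus the vertical budget $\sum_\alpha(2m_\alpha+1)\le3\deg D$ from the main zero bound. B\'ezout gives at most $7$ for a conic (Lemma~\ref{lem:bezout-infinity} when $\deg Q_1=4$) and at most $4$ for a line, so the totals are $7+3$ and $4+6$. B\'ezout on a ``residual pair'' with $5$ zeros per vertical line does not suffice, as you noticed. When $B\equiv0$ you also need the safe vertical estimate ($\le9$, or $p-2$ in the boundary configuration at $p=7$).

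\textbf{The persistent conic.} Excluding $\deg D=1$, $C\mid Q_1$ is indispensable. Theorem~\ref{thm:full-degree-component-erosion} only gives $C\nmid Q_2$, and B\'ezout against $Q_2$ allows up to $10$ zeros on $C$ before the vertical fiber is added. But it is not an instance of Lemma~\ref{lem:Q0-not-divide-Q1-universal}; it needs an explicit computation. Write $D=a$, $C=A+bB$, $Q_1=C(l_0+l_1b+l_2b^2)$. The recurrence gives $6l_2B=B+aB'$ and $2(Al_2+Bl_1)=A+aA'$, which force $B$ to be a monomial. If $\deg B=1$, the first equation gives $l_2=\tfrac13$, while evaluating the second at $a=0$ (where $A(0)\ne0$) gives $l_2=\tfrac12$. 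If $B$ is constant, $2Bl_1=aA'+\tfrac23A$ has $a^2$-coefficient $\tfrac83[a^2]A\ne0$, contradicting $\deg l_1\le1$.
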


\begin{proof}
Let \(r_Z\) be the zero-row rank.

If \(r_Z\le4\), a coordinate-hyperplane cut produces a nonzero
coefficient vector on at most five support points preserving every
zero.  Actual odd support at most five has at most six zeros by
Theorems~\ref{thm:three-point-exact} and
\ref{thm:five-point-six-zero}.  If the reduced actual support is four,
Proposition~\ref{prop:four-point-large-zero-structure} gives an affine
zero line and at most two preserved zeros off it.  Since more than
eight original zeros then lie on that line, the support-six
restriction cannot be nonzero there; Lemma~\ref{lem:affine-line-zero-bound}
forces the original sum to vanish identically on the same line.  The
support-two alternative is simpler.  Thus
\eqref{eq:six-point-ten-structure} holds in the rank-deficient branch.

Assume \(r_Z=5\), scale the coefficients into \(K=\Q(\omega)\), and
let \(n\le3\) be the first nonzero jet.

For \(n=0\) there are no zeros.  For \(n=1\), all exact zeros lie on
the affine line \(Q_0=0\); more than five force exact vanishing on a
nonvertical line.

For \(n=2\), if \(B\equiv0\) the vertical-fiber estimate gives at most
six zeros.  If \(B\not\equiv0\), factor
\[
 Q_0=D(a)C(a,b).
\]
A primitive nonlinear component has degree two, hence
\(\deg Q_0=n=2\); it gives at most six zeros by B\'ezout and
Lemma~\ref{lem:Q0-not-divide-Q1-universal}.  If \(C\) is a line, a
nonexact line contributes at most five zeros and there is at most one
exceptional vertical fiber, contributing at most three further zeros.
Hence more than ten zeros force an exact affine zero line; the off-line
vertical contribution is at most three.

Now let \(n=3\).  If \(B\equiv0\), the safe vertical estimate gives at
most nine zeros, except in the already-audited boundary configuration,
where the stronger estimate \(p-2\) applies.

Assume \(B\not\equiv0\).
Since \(n=3=\lfloor6/2\rfloor\),
Lemma~\ref{lem:maximal-order-full-degree} gives
\[
 \deg Q_0=3.
\]
Write
\[
 Q_0=D(a)C(a,b),
 \qquad
 m:=\deg D.
\]

If \(m=0\), then \(C=Q_0\) is a primitive cubic.
Lemma~\ref{lem:Q0-not-divide-Q1-universal} makes \(Q_0,Q_1\)
coprime.  If \(\deg Q_1\le3\), ordinary B\'ezout gives at most nine
affine common zeros.  If \(\deg Q_1=4\), then
\[
 \deg Q_0=3,\quad\deg_bQ_0=1,
 \qquad
 \deg Q_1=4,\quad\deg_bQ_1\le3,
\]
and Lemma~\ref{lem:bezout-infinity} gives at most
\[
 3\cdot4-(3-1)(4-3)=10
\]
affine common zeros.

If \(m=1\), then \(C\) is quadratic.  If \(C\nmid Q_1\), then
ordinary B\'ezout gives at most six exact zeros on \(C\) when
\(\deg Q_1\le3\); when \(\deg Q_1=4\),
Lemma~\ref{lem:bezout-infinity} gives at most
\(2\cdot4-(2-1)(4-3)=7\).  The exceptional vertical fiber contributes
at most three further zeros.

We claim \(C\mid Q_1\) is impossible.  Translate \(a\) so that
\(D(a)=a\), and write
\[
 C=A(a)+bB(a).
\]
Since \(\deg C=2\), one has \(\deg B\le1\).  If
\[
 Q_1=C(l_0+l_1b+l_2b^2),
\]
the recurrence gives
\begin{align*}
 6l_2B&=B+aB',\\
 2(A l_2+B l_1)&=A+aA'.
\end{align*}
The first equation forces \(B\) to be a monomial of degree \(0\) or
\(1\).  If \(\deg B=1\), primitivity gives \(A(0)\ne0\); evaluating the
second equation at \(a=0\) yields \(2l_2=1\), while the first gives
\(6l_2=2\), a contradiction in characteristic \(p\ge7\).
If \(B\) is constant, then \(l_2=1/6\), and
\[
 2Bl_1=aA'+\frac23A.
\]
Here \(\deg l_1\le1\), whereas \(\deg C=2\) forces \(\deg A=2\).  The
quadratic coefficient on the right is
\[
 \frac83[a^2]A\ne0,
\]
again a contradiction.  Thus the quadratic component cannot persist.

Finally let \(m=2\), so \(C\) is a nonvertical affine line.  If
\(C\nmid Q_1\), the line and \(Q_1\) have at most four common points,
while the multiplicity-weighted vertical budget is at most six; hence
\(|Z(F)|\le10\).

Assume \(C\mid Q_1\).  By the affine symmetries of
Lemma~\ref{lem:parabolic-affine-symmetry}, we may send \(C\) to the line
\(b=0\) without changing zero counts or vertical multiplicities.  Thus
\[
 Q_0=D(a)b,
 \qquad \deg D=2.
\]
The recurrence gives
\[
 Q_1=U(a)+V(a)b+\frac{D'(a)}6b^3.
\]
Since \(b\mid Q_1\), one has \(U\equiv0\), and therefore
\[
 Q_1=b\left(V(a)+\frac{D'(a)}6b^2\right).
\]
If \(D\) is irreducible over \(\F_p\), there is no exceptional vertical
fiber over \(\F_p\).  If \(D\) has two distinct roots, then at each root
\(\alpha\) the coefficient \(D'(\alpha)/6\) is nonzero, so the quotient is
a genuine quadratic in \(b\) and contributes at most two off-line zeros;
there are therefore at most four off-line zeros in total.

It remains to consider a double root \(\alpha\).  Translate \(a\) so that
\(\alpha=0\) and scale \(D\) so that \(D=a^2\).  If \(V(0)\ne0\), the
exceptional fiber contributes no off-line exact zero.  If \(V(0)=0\), then
\(Q_1(0,b)\equiv0\), so we use the next jet.  From
\(\partial_b^2Q_2=\partial_aQ_1\) one obtains
\[
 Q_2(0,b)=P_0+P_1b+\frac{V'(0)}6b^3+\frac1{60}b^5,
\]
whose leading coefficient is nonzero for every prime \(p\ge7\).  Thus the
exceptional fiber contains at most five exact zeros.

If \(F\) does not vanish identically on \(b=0\), the support-six line bound
gives at most five line zeros; together with the preceding fiber bound this
gives at most ten zeros.  Hence \(|Z(F)|>10\) forces \(F(a,0)=0\) for all
\(a\).  In that case \((0,0)\) is a root of the nonzero quintic
\(Q_2(0,b)\), so it has at most four further roots with \(b\ne0\).  Therefore
\[
 |Z(F)\setminus C|\le4.
\]
This completes the persistent-line case and the proof.
\end{proof}

\begin{theorem}[Seven-point refinement]
\label{thm:seven-point-ten}
Let \(p\ge11\) and suppose \(|R|=7\).  Then
\begin{equation}
 \boxed{|Z(F)|\le10.}
 \label{eq:seven-point-ten}
\end{equation}
\end{theorem}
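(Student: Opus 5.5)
We argue by contradiction, assuming \(|Z(F)|\ge11\), and split on the zero-row rank \(r_Z\) from Lemma~\ref{lem:complex-to-cyclotomic}, following the pattern of Theorem~\ref{thm:five-point-six-zero}.

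\textbf{Rank-deficient branch.} If \(r_Z\le5\), coordinate cuts produce a nonzero coefficient vector on at most six support points that preserves every zero. If the reduced actual support is odd (\(1,3,5\)), Theorems~\ref{thm:three-point-exact} and~\ref{thm:five-point-six-zero} give at most six zeros. If the reduced support is two, the zero set lies on one affine line. If it is four or six, Propositions~\ref{prop:four-point-large-zero-structure} and~\ref{prop:six-point-ten-threshold} force at least \(11-4=7\) of the original zeros onto a single affine line. In every case, at least seven original zeros lie on one line. The odd line bound \eqref{eq:odd-line-bound} for support seven allows at most six, a contradiction.

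\textbf{Full-rank branch.} If \(r_Z=6\), we scale into \(K\) and take the first jet order \(n\le3\).
\begin{itemize}
\item For \(n=0\) there are no zeros.
\item For \(n=1\), every zero lies on the line \(Q_0=0\). This line is vertical (at most six zeros) or nonvertical and not identically zero because \(k\) is odd; Lemma~\ref{lem:affine-line-zero-bound} then gives at most \(\min\{6,(p-1)/2\}\) zeros.
\item For \(n=2\), we copy the \(n=2\) analysis of Proposition~\ref{prop:six-point-ten-threshold}. A quadratic primitive component gives at most six zeros by B\'ezout and Lemma~\ref{lem:Q0-not-divide-Q1-universal}. A line component carries at most six zeros plus at most three on one exceptional fiber, so at most nine. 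If \(B\equiv0\), the vertical estimate gives at most six.
\end{itemize}

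\textbf{The case \(n=3\).} Lemma~\ref{lem:maximal-order-full-degree} gives \(\deg Q_0=3\). If \(B\equiv0\), the safe vertical count gives at most \(3\deg A\le9\); the boundary case cannot occur since \(k=7<p-1\). Otherwise we write \(Q_0=D(a)C(a,b)\) with \(m=\deg D\) and reuse the three subcases of Proposition~\ref{prop:six-point-ten-threshold} verbatim, since those arguments only used the recurrence and the degree bounds \(\deg Q_1\le4\), \(\deg_bQ_1\le3\).
\begin{itemize}
\item \(m=0\): B\'ezout, with Lemma~\ref{lem:bezout-infinity} when \(\deg Q_1=4\), gives at most \(10\).
\item \(m=1\): non-persistence of the quadratic component (the same recurrence computation, valid for \(p\ge7\)) gives at most \(7+3=10\).
\item \(m=2\), \(C\nmid Q_1\): at most \(4+6=10\).
\end{itemize}

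\textbf{The persistent-line case and the main obstacle.} The only place parity enters, and where the even-support argument stops, is \(m=2\) with \(C\mid Q_1\). The six-point proof there reached "at most five line zeros plus five fiber zeros" and needed exact vanishing on the line to exceed ten. For odd support, exact vanishing on a nonvertical line is impossible by Lemma~\ref{lem:affine-line-zero-bound}, so the line carries at most six zeros. After normalizing \(C\) to \(b=0\), the off-line zeros are controlled by \(Q_1=b(V+D'b^2/6)\): at most four when \(D\) has distinct roots, none when \(D\) is irreducible, and at most five roots of the quintic \(Q_2(0,b)\) in the double-root case. This naive count gives \(6+5=11\), one too many.

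The tightening will come from the quintic. It has leading coefficient \(1/60\), nonzero for \(p\ge11\), and \(Q_2(0,b)=P_0+P_1b+\tfrac{V'(0)}6b^3+\tfrac1{60}b^5\) has no \(b^2\) or \(b^4\) terms. I would first try to show, via \(b\mid Q_1\) and the recurrence, that \(b\mid Q_2(0,b)\), i.e.\ \(P_0=0\). Then \((0,0)\) is a fiber root lying on the line, so the fiber contributes at most four off-line zeros and the total is \(6+4=10\). If \(P_0\ne0\), I would instead use the fact that the line bound six is achieved only when \(g\) has full support, and show that this is incompatible with a zero of the fiber at \(b\ne0\); this step is the one I expect to need genuine new computation.
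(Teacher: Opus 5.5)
You have a genuine gap, and it sits exactly where you say it does. Up to that point your proposal is sound and follows the paper's proof closely. Your merged rank-deficient branch (\(r_Z\le5\)) is fine, and so is the cruder bound \(9\) for a persistent line when \(n=2\). The unresolved subcase is \(n=3\), \(\deg D=2\), \(C\mid Q_1\) normalized to \(b=0\), \(D=a^2\), \(Q_1(0,b)\equiv0\), where your count stops at \(6+5=11\).

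Your first idea for closing it cannot work. The recurrence \(\partial_b^2Q_2=\partial_aQ_1\) determines \(Q_2\) only up to an added term \(P_0(a)+P_1(a)b\). The restriction \(Q_2(a,0)=P_0(a)\) is precisely that free integration term, so neither \(b\mid Q_1\) nor the recurrence constrains it; your constant is its value at \(a=0\). Your second idea, full support of \(g\), is unnecessary.

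The missing observation is that the exact zeros on the line \(b=0\) themselves annihilate \(Q_2\), by Lemma~\ref{lem:exact-zero-all-jets}. Moreover \(a\mapsto Q_2(a,0)\) has degree at most \(n+2=5\), since \(H_5\) has total degree at most \(5\), and \(Q_2\) is available because \(p\ge11\). So split on the number of exact zeros on the line.
\begin{itemize}
\item If the line carries at most five exact zeros, the total is at most \(5+5=10\).
\item If it carries six, then \(Q_2(a,0)\) has six distinct roots and vanishes identically. Hence \(Q_2(0,0)=0\), so \((0,0)\) is one of the at most five roots of the nonzero quintic \(Q_2(0,b)\). The exceptional fiber then contributes at most four zeros with \(b\ne0\), giving \(6+4=10\).
\end{itemize}
In your own dichotomy this reads as follows. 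If the constant term of \(Q_2(0,b)\) is nonzero, then \(Q_2(\cdot,0)\) is a nonzero polynomial of degree at most \(5\), so the line has at most five zeros. If it vanishes, the fiber loses one off-line root. This is how the paper finishes the proof.
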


\begin{proof}
Let \(r_Z\le6\) be the zero-row rank.

If \(r_Z\le4\), coordinate-hyperplane cuts reduce the common zero
system to actual support at most five.  Odd reductions have at most
six zeros, while a four-point reduction with a large zero set has an
affine zero line and at most two zeros off it.  Since the original
seven-point sum has at most six zeros on any affine line, these cases
give at most eight zeros.

If \(r_Z=5\), one coordinate cut gives a nonzero vector on at most six
support points preserving all zeros.  Supports at most five are
already controlled.  If the reduced actual support is six and more
than ten zeros are preserved,
Proposition~\ref{prop:six-point-ten-threshold} gives an affine zero
line with at most four zeros off it.  The original seven-point sum has
at most six zeros on that line, so
\[
 |Z(F)|\le6+4=10.
\]

It remains to consider \(r_Z=6\).  Scale into \(K\), and let \(n\le3\)
be the first nonzero jet.

For \(n=0,1\) the bound is immediate.  Suppose \(n=2\).  If
\(B\equiv0\), the vertical estimate gives at most six zeros.  Assume
\(B\not\equiv0\) and factor \(Q_0=D(a)C(a,b)\).  If \(C\) is a
primitive quadratic, then \(D\) is constant and
Lemma~\ref{lem:Q0-not-divide-Q1-universal} shows \(C\nmid Q_1\); B\'ezout
therefore gives at most six exact zeros.  If \(C\) is a line and
\(\deg D=0\), then \(Q_0\) itself is a nonvertical affine line, so every
exact zero lies on \(C\); odd-support line rigidity gives
\(|Z(F)|\le6\).  It remains to consider the line case \(\deg D=1\).
When \(C\nmid Q_1\), the line contains at most six exact zeros by
odd-support line rigidity, while the single exceptional vertical fiber
contains at most three, giving \(|Z(F)|\le9\).  When \(C\mid Q_1\), write
\(Q_1=CL\) with \(\deg L\le2\).  Let \(\alpha\in\F_p\) be the root of
the linear polynomial \(D\).  If
\(L(\alpha,b)\equiv0\), then \(Q_1(\alpha,b)\equiv0\), whereas the
recurrence gives
\[
 \partial_b^2Q_1(\alpha,b)
 =\partial_aQ_0(\alpha,b)
 =D'(\alpha)C(\alpha,b)\not\equiv0,
\]
a contradiction.  Thus \(L(\alpha,b)\) is nonzero and has at most two
roots, so there are at most two off-line zeros on the exceptional fiber and
\(|Z(F)|\le8\).  Hence in every \(n=2\) case
\(|Z(F)|\le9<10\).

Let \(n=3\).  If \(B\equiv0\), the vertical estimate gives at most nine
zeros.  For \(B\not\equiv0\), factor
\[
 Q_0=D(a)C(a,b).
\]
Since
\[
 n=3=\left\lfloor\frac72\right\rfloor,
\]
Lemma~\ref{lem:maximal-order-full-degree} gives
\[
 \deg Q_0=3.
\]

If \(\deg D=0\), the cubic \(Q_0\) and \(Q_1\) are coprime.  The same degree split used in the six-point proof (ordinary B\'ezout when \(\deg Q_1\le3\), and Lemma~\ref{lem:bezout-infinity} when \(\deg Q_1=4\)) gives at most ten affine zeros.

If \(\deg D=1\), the nonvertical component is quadratic.  If it does
not divide \(Q_1\), the same degree split as above gives at most seven
zeros on the component and the vertical fiber contributes at most three
further zeros.  The persistence alternative is impossible by
the quadratic calculation in the proof of
Proposition~\ref{prop:six-point-ten-threshold}.

If \(\deg D=2\), the nonvertical component is a line.  When it does
not divide \(Q_1\), the line contributes at most four zeros and the
vertical budget at most six, giving at most ten.

Assume now that the line divides \(Q_1\).  Normalize it to \(b=0\) as in
the persistent-line analysis of Proposition~\ref{prop:six-point-ten-threshold},
so \(Q_0=D(a)b\).  If \(D\) is irreducible or has two distinct roots over
\(\F_p\), the same explicit quotient formula
\[
 Q_1=b\left(V(a)+\frac{D'(a)}6b^2\right)
\]
gives at most four off-line zeros, while odd-support line rigidity gives at
most six line zeros; hence the total is at most ten.

Suppose finally that \(D\) has a double root.  Translate and scale so that
\(D=a^2\).  If the exceptional fiber is not annihilated by \(Q_1\), it
contributes no off-line zero.  Otherwise \(Q_1(0,b)\equiv0\), and the next
jet satisfies
\[
 Q_2(0,b)=P_0+P_1b+\frac{V'(0)}6b^3+\frac1{60}b^5,
\]
a nonzero polynomial of degree five.  Hence there are at most five exact
zeros on the exceptional vertical fiber.  If the line \(b=0\) contains at
most five exact zeros, the total is at most ten.  If it contains six exact
zeros, then the polynomial \(a\mapsto Q_2(a,0)\), whose degree is at most
\(5\), has six distinct roots and must vanish identically.  In particular
\(Q_2(0,0)=0\), so the nonzero quintic \(Q_2(0,b)\) has at most four roots
with \(b\ne0\).  Again the total is at most \(6+4=10\).

All cases give \eqref{eq:seven-point-ten}.
\end{proof}

\begin{remark}
The general odd-support theorem gives \(12\) for seven-point support.
Theorem~\ref{thm:seven-point-ten} shows that the uniform quadratic
bound is already nonsharp at seven-point support.  The present argument
lowers the bound from $12$ to $10$ but does not identify the exact uniform
maximum.  The improvement comes from forced intersection at infinity
together with the quotient control on persistent line components.
\end{remark}

\begin{remark}[Seven-point sharpness]
Theorem~\ref{thm:seven-point-ten} improves the general odd-support bound
from \(12\) to \(10\) at support seven.  We do not determine here whether
this constant is optimal for every prime \(p\ge11\); in particular, the
present argument does not determine the exact seven-point maximum when
\(p=11\).
\end{remark}

\subsection{Jet components and intersection theory}

\begin{theorem}[Full-degree jet profile and component erosion]
\label{thm:full-degree-component-erosion}
Let \(n\ge1\) be the first nonzero cyclotomic jet order, assume
\[
 \deg Q_0=n,
\]
and put
\[
 S_m:=\sum_{x\in R}c_{x,0}x^m.
\]
Then
\begin{equation}
 S_0=\cdots=S_{2n-2}=0,
 \label{eq:full-degree-moment-window}
\end{equation}
while
\[
 u:=S_{2n-1},\qquad v:=S_{2n}
\]
are not both zero.  For \(0\le j\le n-1\), the highest homogeneous
part of \(Q_j\) is
\begin{equation}
 Q_j^{[n+j]}(a,b)
 =
 \frac1{(n+j)!}
 \sum_{t=0}^{n+j}
 \binom{n+j}{t}
 a^tb^{n+j-t}S_{n+j+t}.
 \label{eq:full-degree-leading-form}
\end{equation}
Consequently,
\[
 \operatorname{ord}_aQ_j^{[n+j]}
 =
 \begin{cases}
 n-j-1,&u\ne0,\\
 n-j,&u=0.
 \end{cases}
\]
In particular, if \(C\) is an irreducible factor of \(Q_0\) of degree
\(d\ge2\), then
\begin{equation}
 \boxed{
 C\nmid Q_{\,n-d+1}.
 }
 \label{eq:full-degree-component-death}
\end{equation}
The indicated jet is always available.
\end{theorem}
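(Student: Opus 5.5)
The plan is to work entirely with the top homogeneous parts, exactly as in Lemmas~\ref{lem:first-nonzero-jet} and \ref{lem:maximal-order-full-degree}. First, the vanishing $H_0=\cdots=H_{n-1}=0$ gives, through the top homogeneous part \eqref{eq:top-homogeneous-rigorous} of each $H_j$ and the binomial expansion, the moment vanishing $S_d=0$ for $j\le d\le 2j$, $j\le n-1$. This covers the window $0\le d\le 2n-2$ and proves \eqref{eq:full-degree-moment-window}. The claim that $u,v$ are not both zero is proved as in Lemma~\ref{lem:maximal-order-full-degree}. The hypothesis $\deg Q_0=n$ means the degree-$n$ part of $H_n$ is nonzero. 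That part is $\frac1{n!}\sum_t\binom nt a^tb^{n-t}S_{n+t}$, and all its moments $S_{n+t}$ with $t\le n-2$ already vanish, so $S_{2n-1}$ or $S_{2n}$ must be nonzero.

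Next comes formula \eqref{eq:full-degree-leading-form}. The coefficient of $T^{n+j}$ in $\mathcal G$ is $\sum_\ell\sum_x c_{x,\ell}(ax^2+bx)^{n+j-\ell}/(n+j-\ell)!$. Its part of total degree $n+j$ comes only from $\ell=0$, giving the displayed sum. For $j\le n-1$ we have $n+j<p$, so the factorials are invertible. In that sum every term with $n+j+t\le 2n-2$, i.e.\ $t\le n-j-2$, vanishes, and the term $t=n-j-1$ carries $S_{2n-1}=u$. If $u\ne0$, the lowest $a$-power is therefore $a^{n-j-1}$, with coefficient $\binom{n+j}{n-j-1}u/(n+j)!\ne0$ because $n+j<p$. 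If $u=0$, the term $t=n-j$ carries $v\ne0$. This gives the stated value of $\operatorname{ord}_a Q_j^{[n+j]}$. I note that the top form is nonzero in either case, so $\deg Q_j=n+j$ exactly.

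For the component-death claim \eqref{eq:full-degree-component-death}, take an irreducible $C\mid Q_0$ of degree $d\ge2$ and set $j=n-d+1$. Since $d\le n$, we have $1\le j\le n-1$, so $Q_j$ is defined; this is the "always available" remark. The idea is to compare top homogeneous parts. If $C\mid Q_j$, then $C^{[d]}$ divides both $Q_0^{[n]}$ and $Q_j^{[n+j]}$.

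Because $Q_0=A+bB$ has $b$-degree at most one, an irreducible factor of degree $d\ge2$ must be nonvertical and of the form $A_0(a)+bB_0(a)$. Its top form is therefore $a^{d-1}(\alpha a+\beta b)$, with $\beta\ne0$, or $\beta=0$ and $\alpha\neq 0$. In either case $a^{d-1}\mid C^{[d]}$. Combined with the order formula, this would force $d-1\le\operatorname{ord}_aQ_j^{[n+j]}\le n-j=d-1$. Equality occurs only when $u=0$, and when $u\ne0$ we already get the contradiction $d-1\le d-2$.

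The main obstacle is the residual case $u=0$. There the top forms are $v\,a^{n-j}b^{n+j-?}$-type monomial expressions; more precisely $Q_j^{[n+j]}=\binom{n+j}{n-j}v\,a^{n-j}b^{2j}/(n+j)!$ and $Q_0^{[n]}=v\,a^n/n!$. I would then need the linear factor $\alpha a+\beta b$ of $C^{[d]}$ to divide $a^{n-j}b^{2j}$ beyond $a^{d-1}$, forcing it to be $b$. But $b\nmid Q_0^{[n]}=va^n/n!$, so $\beta=0$ and $C^{[d]}=\alpha a^d$. In that case $a^d\mid Q_j^{[n+j]}$, which contradicts $\operatorname{ord}_a=n-j=d-1$. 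If this top-form argument leaves a gap, I would fall back on iterating the recurrence $\partial_b^{2}Q_j=\partial_aQ_{j-1}$ along $C$, in the manner of Lemma~\ref{lem:Q0-not-divide-Q1-universal}.
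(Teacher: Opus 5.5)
Your argument is essentially the paper's proof. It uses the moment window from $H_0=\cdots=H_{n-1}=0$ and the top-form formula with its $a$-adic order. It then notes that $C^{[d]}$ is divisible by $a^{d-1}$, and by $a^d$ when $u=0$ since then $Q_0^{[n]}=v\,a^n/n!$, which is incompatible with $\operatorname{ord}_aQ_{n-d+1}^{[2n-d+1]}\in\{d-2,d-1\}$.

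One aside is false: for $j\ge1$ and $u=0$ the form $Q_j^{[n+j]}$ is not a monomial, because $S_{2n+1},\dots,S_{2n+2j}$ need not vanish. You only use its $a$-adic order, though, so the conclusion stands. Two smaller points: a factor of $\gcd(A,B)$ that is irreducible over $\F_p$ can have degree $d\ge2$ and involve only $a$, but your $\beta=0$ case already covers it. Availability of $Q_j$ should be justified by $n+j\le 2n-1\le p-2$, which follows from $n\le(p-1)/2$.
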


\begin{proof}
For \(0\le q<n\), the highest homogeneous part of \(H_q\) is
\[
 \frac1{q!}
 \sum_{x\in R}c_{x,0}(ax^2+bx)^q.
\]
Since \(H_q=0\), the argument used in
Lemma~\ref{lem:first-nonzero-jet} gives
\[
 S_m=0
 \qquad(q\le m\le2q).
\]
The intervals \([q,2q]\), \(0\le q<n\), cover
\(0,\ldots,2n-2\), proving
\eqref{eq:full-degree-moment-window}.

Formula \eqref{eq:full-degree-leading-form} follows by taking the top
homogeneous part of \(H_{n+j}\).  For \(j=0\), the only potentially
nonzero contributions are \(S_{2n-1}\) and \(S_{2n}\).  If both
vanished, the homogeneous degree-\(n\) part of \(Q_0\) would vanish,
contrary to \(\deg Q_0=n\).  Thus \((u,v)\ne(0,0)\).

If \(u\ne0\), the first nonzero term in
\eqref{eq:full-degree-leading-form} occurs at
\[
 t=n-j-1
\]
and has \(b\)-exponent \(2j+1\).  If \(u=0\), then \(v\ne0\), and the
first nonzero term occurs at
\[
 t=n-j
\]
with \(b\)-exponent \(2j\).  All relevant binomial and factorial
coefficients are nonzero because
\[
 n+j\le2n-1\le p-2.
\]
This proves the exact \(a\)-adic orders.

Now let \(C\mid Q_0\) have degree \(d\ge2\).
The top form of \(Q_0\) is
\[
 Q_0^{[n]}
 =
 \frac{a^{n-1}}{n!}(nu\,b+va).
\]
If \(u\ne0\), every degree-\(d\) homogeneous divisor of this form is
divisible by at least \(a^{d-1}\); if \(u=0\), it is divisible by
\(a^d\).  Put
\[
 j=n-d+1.
\]
The preceding erosion law gives \(a\)-adic order \(d-2\) or \(d-1\),
respectively, for \(Q_j^{[n+j]}\).  Hence the highest homogeneous form
of \(C\) cannot divide that of \(Q_j\), and therefore \(C\nmid Q_j\).

Finally \(1\le j\le n-1\) and
\[
 n+j=2n-d+1\le2n-1\le p-2,
\]
so \(Q_j\) is available.
\end{proof}

\begin{theorem}[Full-degree odd-support bound]
\label{thm:full-degree-odd-bound}
Let
\[
 |R|=2r+1\le p-1,\qquad r\ge3,
\]
and suppose the exact zero-row system has rank \(2r\).
Scale the coefficients into \(K=\Q(\omega)\), let \(n\le r\) be the
first nonzero jet order, and assume
\[
 \deg Q_0=n.
\]
Then
\begin{equation}
 \boxed{|Z(F)|\le r(r+1).}
 \label{eq:full-degree-odd-bound}
\end{equation}
\end{theorem}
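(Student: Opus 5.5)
We split according to the canonical factorization of the leading jet and bound each piece separately. If \(B\equiv0\), the vertical estimate from the proof of Lemma~\ref{lem:cyclotomic-zero-rigorous} already gives at most \(3n\le 3r\le r(r+1)\) zeros, using \(r\ge3\). The boundary configuration there gives the stronger bound \(p-2\); it can only occur when \(2r+1=p-1\), which is impossible because \(p-1\) is even. So assume \(B\not\equiv0\) and write \(Q_0=D(a)C(a,b)\) with \(m=\deg D\), so that \(\deg C=n-m\). Factor \(C\) over \(\overline{\F}_p\) into irreducible components \(C_1,\dots,C_s\) of degrees \(d_1+\cdots+d_s=n-m\); recall that \(C\) itself is absolutely irreducible. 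Every exact zero lies either on an exceptional vertical fiber \(a=\alpha\) with \(D(\alpha)=0\), or on \(C\).

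\textbf{Vertical part.} By the multiplicity budget of Lemma~\ref{lem:cyclotomic-zero-rigorous}, the exceptional fibers contain at most \(\sum_\alpha(2m_\alpha+1)\le 3m\) zeros. A sharper count only records points off \(C\): on a fiber of multiplicity \(m_\alpha\), the polynomial \(Q_{m_\alpha}(\alpha,b)\) has degree at most \(2m_\alpha+1\) in \(b\), so this part contributes at most \(2m+\#\{\text{roots}\}\le 3m\).

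\textbf{Curve part.} Since \(C\) is absolutely irreducible of degree \(d=n-m\), Theorem~\ref{thm:full-degree-component-erosion} gives \(C\nmid Q_j\) for \(j=n-d+1=m+1\) when \(d\ge2\). Exact zeros lie on every available jet, so the zeros on \(C\) are contained in \(C\cap\{Q_j=0\}\) for the first index \(j\le m+1\) with \(C\nmid Q_j\). Here \(\deg Q_j\le n+j\) and \(\deg_b Q_j\le 2j+1\), while \(\deg_bC=1\). Applying Lemma~\ref{lem:bezout-infinity} gives
\[
 \#(Z\cap C)\le d(n+j)-(d-1)(n+j-2j-1)=n+j+(d-1)(2j+1),
\]
whenever \(2j+1<n+j\); otherwise we use ordinary B\'ezout, which gives \(d(n+j)\). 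With \(j\le m+1\) and \(d=n-m\), the worst case is roughly \((n-m)(2m+3)+m+1\). Adding the vertical budget \(3m\) and maximizing over \(0\le m\le n-2\) with \(n\le r\), we expect a quadratic expression bounded by \(r(r+1)\), checking the endpoints \(m=0\) and \(m=n-2\) directly.

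\textbf{Line case \(d=1\).} Here Theorem~\ref{thm:full-degree-component-erosion} does not apply. Instead we use odd-support rigidity: by Lemma~\ref{lem:affine-line-zero-bound}, \(F\) cannot vanish on a nonvertical line, so the line carries at most \(\min\{2r,(p-1)/2\}\) zeros. Together with the vertical budget \(3(n-1)\), this totals at most \(2r+3r-3\le r(r+1)\) for \(r\ge3\), since \(r^2-4r+3=(r-1)(r-3)\ge0\). This is exactly where \(r\ge3\) enters.

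\textbf{Main obstacle.} The hard step is the optimization in the curve part. The erosion index \(j=m+1\) can make \(\deg Q_j\) as large as \(n+m+1\), and we must check that the corrected B\'ezout bound plus \(3m\) never exceeds \(r(r+1)\). If the crude bound fails at the extreme \(m\), the plan is to sharpen the vertical count using the fact that fiber zeros already lying on \(C\) are counted twice. If that is still not enough, we refine via the exact \(a\)-adic erosion orders: these force additional intersection at the point \([0:1:0]\), or along \(a=0\), between the top forms of \(C\) and \(Q_j\), beyond what Lemma~\ref{lem:bezout-infinity} accounts for.
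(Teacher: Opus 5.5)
Your architecture is the paper's. You dispose of $B\equiv0$, and your parity remark correctly excludes the boundary configuration. You handle a line component by odd-support line rigidity; your $(r-1)(r-3)\ge0$ is exactly the paper's computation. For $d\ge2$ you combine $C\nmid Q_{m+1}$ from Theorem~\ref{thm:full-degree-component-erosion} with B\'ezout and the vertical budget $3m$.

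The gap is that you leave the final inequality as an expectation, and it is false in one case. In the worst case $j=m+1$, your count is $n+m+1+(n-m-1)(2m+3)+3m=-2m^2+(2n-1)m+4n-2$. At $m=1$ this equals $6n-5$, and for $n=r=3$ it is $13>12=r(r+1)$: the conic $C$ can meet $Q_2$ in $2\cdot5=10$ points, and the exceptional fiber is allotted $3$. This is the subcase $C\mid Q_1$; if $C\nmid Q_1$ you get at most $7+3$. Neither fallback closes it as stated. When $u=S_{2n-1}\ne0$, the top form of $Q_2$ contains $b^5$, so $Q_2$ does not pass through $[0{:}1{:}0]$, and Lemma~\ref{lem:bezout-infinity} gives nothing beyond ordinary B\'ezout. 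Double counting saves a point only if the point of $C$ on the fiber $a=\alpha$ is itself a zero of $F$, which need not happen; and if $B_0(\alpha)=0$, that fiber does not meet $C$ at all.

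What is missing is the paper's one-point saving for $m=1$ with $C\mid Q_1$. Write $Q_1=CL$. Since $\deg C=n-1$ and $\deg Q_1\le n+1$, one has $\deg L\le2$. At the simple root $\alpha$ of $D$, the recurrence gives $\partial_b^2Q_1(\alpha,b)=\partial_aQ_0(\alpha,b)=D'(\alpha)C(\alpha,b)$. This is not identically zero, because $D'(\alpha)\ne0$ and $C$ is primitive. Hence $L(\alpha,\cdot)\not\equiv0$. Every exact zero on that fiber lying off $C$ is a root of $L(\alpha,\cdot)$, so there are at most two such zeros. Together with $C\nmid Q_2$, this gives $(n-1)(n+2)+2=n(n+1)\le r(r+1)$.

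With this added, everything else closes. Using ordinary B\'ezout the total is $(n-m)(n+m+1)+3m=n(n+1)+1-(m-1)^2$, so only $m=1$ needs care. Your sharpened count via the point at infinity fails only at $n=r=3$, $m=1$.
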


\begin{proof}
Write
\[
 Q_0=A(a)+bB(a).
\]
If \(B\equiv0\), all exact zeros lie on vertical fibers over roots of
\(A\).  Since \(2n\le2r\le p-3\), the safe fiber estimate applies to
every root and gives
\[
 |Z(F)|\le3n\le3r\le r(r+1).
\]

Assume \(B\not\equiv0\).  Put
\[
 D=\gcd(A,B),\qquad m=\deg D,
\]
and factor
\[
 Q_0=D(a)C(a,b),
\]
where \(C\) is primitive, irreducible, nonvertical, and
\[
 d:=\deg C=n-m.
\]
The exact zeros not on \(C\) lie on vertical fibers over roots of
\(D\).  If a root has multiplicity \(s\) in \(D\), then the common
order of \(A\) and \(B\) there is exactly \(s\), so the fiber contains
at most \(2s+1\) exact zeros.  Summing gives the vertical budget
\begin{equation}
 N_{\mathrm{vert}}\le3m.
 \label{eq:full-degree-vertical-budget}
\end{equation}

If \(d=1\), the curve \(C=0\) is an affine line and contains at most
\(2r\) exact zeros by the odd-support line bound.  Since
\(m=n-1\le r-1\),
\[
 |Z(F)|
 \le2r+3m
 \le5r-3
 \le r(r+1),
\]
where the last inequality holds for \(r\ge3\).

Assume \(d\ge2\).  By
Theorem~\ref{thm:full-degree-component-erosion},
\[
 C\nmid Q_{m+1},
\]
because
\[
 m+1=n-d+1.
\]
B\'ezout therefore gives
\[
 \#\bigl(Z(F)\cap C\bigr)
 \le
 d(n+m+1).
\]
Combining with \eqref{eq:full-degree-vertical-budget},
\[
 |Z(F)|
 \le
 (n-m)(n+m+1)+3m
 =
 n(n+1)+1-(m-1)^2.
 \label{eq:full-degree-almost}
\]
For \(m=0\) or \(m\ge2\), this is at most
\[
 n(n+1)\le r(r+1).
\]

It remains only \(m=1\).  If \(C\nmid Q_1\), use \(Q_1\) rather than
\(Q_2\):
\[
 \#\bigl(Z(F)\cap C\bigr)
 \le
 (n-1)(n+1)=n^2-1.
\]
Together with the vertical budget \(3\), this gives
\[
 |Z(F)|\le n^2+2\le n(n+1)\le r(r+1)
\]
because \(d=n-1\ge2\) implies \(n\ge3\).

If instead \(C\mid Q_1\), write
\[
 Q_1=C\,L.
\]
Since \(\deg C=n-1\) and \(\deg Q_1\le n+1\),
\[
 \deg L\le2.
\]
The component-erosion theorem gives
\[
 C\nmid Q_2.
\]
Hence
\[
 \#\bigl(Z(F)\cap C\bigr)
 \le
 (n-1)(n+2)
 =
 n^2+n-2.
\]
Let \(\alpha\in\F_p\) be the unique root of the linear polynomial
\(D\).  We claim that \(L(\alpha,b)\) is not the zero polynomial.  If
\(L(\alpha,b)\equiv0\), then \(Q_1(\alpha,b)\equiv0\), and hence
\(\partial_b^2Q_1(\alpha,b)=0\).  On the other hand, the jet recurrence gives
\[
 \partial_b^2Q_1(\alpha,b)
 =\partial_aQ_0(\alpha,b)
 =D'(\alpha)C(\alpha,b).
\]
Since \(D'(\alpha)\ne0\) and primitivity of \(C\) implies
\(C(\alpha,b)\not\equiv0\), this is a contradiction.  Thus every exact
zero on the exceptional vertical fiber but outside \(C\) is a root of the
nonzero polynomial \(L(\alpha,b)\), which has degree at most two.  There are
therefore at most two such points.  Consequently
\[
 |Z(F)|\le n(n+1)\le r(r+1).
\]
\end{proof}

\begin{lemma}[Persistence consumes vertical multiplicity]
\label{lem:persistence-vertical-multiplicity}
Let
\[
 Q_0=D(a)C(a,b),
 \qquad
 C=A(a)+bB(a),
\]
where \(C\) is primitive and nonvertical.  Suppose \(B\) is
nonconstant, \(\deg(DB)<p\), and
\[
 C\mid Q_1,\ldots,Q_J,
 \qquad
 2J+1<p.
\]
(The degree hypothesis is automatic in every application below, since
\(DB\) is the coefficient of \(b\) in \(Q_0\) and
\(\deg Q_0\le n\le(p-1)/2\).)
For \(j\ge1\), write
\[
 Q_j=C\,L_j
\]
and let \(u_j(a)\) be the coefficient of \(b^{2j}\) in \(L_j\);
put \(u_0=D\).  Then
\begin{equation}
 \boxed{
 B\,u_j
 =
 \frac{(DB)^{(j)}}{(2j+1)!}
 \qquad(0\le j\le J).
 }
 \label{eq:top-b-persistence}
\end{equation}
Consequently, for every root \(\beta\) of \(B\) in
\(\overline{\F}_p\),
\begin{equation}
 \operatorname{ord}_\beta D\ge J.
 \label{eq:vertical-multiplicity-consumption}
\end{equation}
In particular, if \(m=\deg D\), if the jets
\(Q_1,\ldots,Q_{m+1}\) are available, and if \(2m+3<p\), then
\begin{equation}
 \boxed{\exists\,j\in\{1,\ldots,m+1\}\ \text{such that}\ C\nmid Q_j.}
 \label{eq:nonconstant-B-death}
\end{equation}
Equivalently, a component satisfying these hypotheses must break by order
\(m+1\).
\end{lemma}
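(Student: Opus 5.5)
Induction on \(j\) via the recurrence \(\partial_b^2Q_j=\partial_aQ_{j-1}\), tracking only the top \(b\)-degree coefficient. First fix degrees. By \eqref{eq:b-degree-rigorous}, \(\deg_bQ_j\le 2j+1\). Since \(C=A+bB\) has \(b\)-degree exactly one (because \(B\ne0\)), the quotient \(L_j=Q_j/C\) has \(\deg_bL_j\le 2j\). Hence the coefficient of \(b^{2j+1}\) in \(Q_j=CL_j\) equals \(B\,u_j\), where \(u_j\) is the \(b^{2j}\)-coefficient of \(L_j\). For \(j=0\) we have \(Q_0=DC\), whose \(b^1\)-coefficient is \(DB=Bu_0\), which is \eqref{eq:top-b-persistence} at \(j=0\).

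For the inductive step, compare the coefficients of \(b^{2j-1}\) on both sides of \(\partial_b^2Q_j=\partial_aQ_{j-1}\). On the left, only the \(b^{2j+1}\) term of \(Q_j\) contributes, giving \((2j+1)(2j)\,Bu_j\). On the right, we get \(\partial_a\) of the \(b^{2j-1}\)-coefficient of \(Q_{j-1}\), namely \((Bu_{j-1})'\). Using the inductive hypothesis \(Bu_{j-1}=(DB)^{(j-1)}/(2j-1)!\), this yields
\[
 (2j+1)(2j)\,Bu_j=\frac{(DB)^{(j)}}{(2j-1)!},
\]
which is \eqref{eq:top-b-persistence}. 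The division by \((2j+1)!\) is legitimate because \(2J+1<p\). This is where the hypothesis \(C\mid Q_j\) enters: it guarantees that \(u_j\) is a polynomial, so \(B\) divides \((DB)^{(j)}\) in \(\F_p[a]\) for every \(j\le J\).

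To get \eqref{eq:vertical-multiplicity-consumption}, fix a root \(\beta\) of \(B\) over \(\overline{\F}_p\). Write \(e=\operatorname{ord}_\beta B\ge1\) and \(f=\operatorname{ord}_\beta D\), so \(\operatorname{ord}_\beta(DB)=e+f\). Because \(\deg(DB)<p\), every order involved is below \(p\), so each differentiation lowers the order at \(\beta\) by exactly one for as long as it stays positive. Suppose \(f<J\), and take \(j=f+1\le J\). Then \(\operatorname{ord}_\beta (DB)^{(j)}=e-1<e\), which contradicts \(B\mid (DB)^{(j)}\). Hence \(f\ge J\).

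For the final claim, suppose \(C\mid Q_1,\dots,Q_{m+1}\), and apply the above with \(J=m+1\); the condition \(2J+1=2m+3<p\) holds by hypothesis. Since \(B\) is nonconstant it has a root \(\beta\) in \(\overline{\F}_p\), and then \(\operatorname{ord}_\beta D\ge m+1>\deg D\). This is impossible: it would force \(D=0\), but \(D\ne0\) because \(Q_0\ne0\). This gives \eqref{eq:nonconstant-B-death}.

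The main obstacle is the bookkeeping in the first step. One must check that \(\deg_bL_j\le 2j\), so that \(Bu_j\) is exactly the top coefficient of \(Q_j\). One must also check that no factorial in the exact order-drop argument vanishes. The latter needs the order of \(DB\) at \(\beta\) to be below \(p\), which the hypothesis \(\deg(DB)<p\) supplies.
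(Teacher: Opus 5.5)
Your proof is correct and follows the paper's argument. It uses the same bookkeeping $\deg_b L_j\le 2j$, the same comparison of $b^{2j-1}$-coefficients in $\partial_b^2Q_j=\partial_aQ_{j-1}$, and the same exact order drop at a root of $B$ guaranteed by $\deg(DB)<p$; the only difference is cosmetic: you reach the contradiction directly at the single index $j=\operatorname{ord}_\beta D+1$, whereas the paper proves $\operatorname{ord}_\beta D\ge j$ by induction on $j$.
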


\begin{proof}
Since \(\deg_bQ_j\le2j+1\) and \(\deg_bC=1\),
\[
 \deg_bL_j\le2j.
\]
The coefficient of \(b^{2j-1}\) in
\[
 \partial_b^2(C L_j)=\partial_a(C L_{j-1})
\]
gives
\[
 (2j)(2j+1)B u_j=(B u_{j-1})'.
\]
Induction yields \eqref{eq:top-b-persistence}; the factorials are
invertible under \(2J+1<p\).

Let \(\beta\) be a root of \(B\), with
\[
 e=\operatorname{ord}_\beta B,\qquad
 s=\operatorname{ord}_\beta D.
\]
We prove \(s\ge j\) inductively for \(1\le j\le J\).

For \(j=1\), the polynomial \(DB\) has local order \(s+e<p\).
Its derivative therefore has exact local order \(s+e-1\), because the
leading falling-factorial coefficient is nonzero in \(\F_p\).
The divisibility
\[
 B\mid(DB)'
\]
forces
\[
 s+e-1\ge e,
\]
hence \(s\ge1\).

Assume now \(s\ge j-1\).  Since \(e\ge1\),
\[
 j\le s+e.
\]
Thus the \(j\)-th derivative of \(DB\) has exact local order
\[
 s+e-j;
\]
again the leading falling factorial is nonzero because
\(\deg(DB)<p\).  The divisibility
\[
 B\mid(DB)^{(j)}
\]
then gives
\[
 s+e-j\ge e,
\]
so \(s\ge j\).  This proves
\eqref{eq:vertical-multiplicity-consumption}.  For the final assertion,
assume for contradiction that \(C\mid Q_j\) for every
\(1\le j\le m+1\).  Applying the preceding conclusion with
\(J=m+1\) to any root \(\beta\in\overline{\F}_p\) of the nonconstant
polynomial \(B\) gives
\[
 \operatorname{ord}_\beta D\ge m+1,
\]
contrary to \(\deg D=m\).  Hence persistence through all of
\(Q_1,\ldots,Q_{m+1}\) is impossible, which proves
\eqref{eq:nonconstant-B-death}.
\end{proof}

\begin{theorem}[Degree-drop branch with nonconstant graph coefficient]
\label{thm:degree-drop-nonconstant-B}
Let
\[
 |R|=2r+1\le p-1,\qquad r\ge3,
\]
and suppose the exact zero-row system has rank \(2r\).
Scale the coefficients into \(K=\Q(\omega)\), let \(n\le r\) be the
first nonzero jet order, and assume
\[
 q:=\deg Q_0<n.
\]
Factor
\[
 Q_0=D(a)C(a,b),
 \qquad
 C=A(a)+bB(a),
\]
with \(C\) primitive and nonvertical.  If \(B\) is nonconstant, then
\begin{equation}
 \boxed{|Z(F)|\le r(r+1).}
 \label{eq:degree-drop-nonconstant-B}
\end{equation}
\end{theorem}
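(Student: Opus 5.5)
We follow the template of Theorem~\ref{thm:full-degree-odd-bound}: split $Z(F)$ into the exact zeros on the nonvertical component $C$ and those on the exceptional vertical fibers over the roots of $D$. Set $m:=\deg D$ and $d:=\deg C$, so $q=m+d<n\le r$; since $B$ is nonconstant, $d\ge2$ or $\deg B\ge1$ forces $d\ge1$, and in fact $d\ge \deg B+? \ge 1$. As in the proof of Theorem~\ref{thm:full-degree-odd-bound}, the common order of $A_Q:=DA$ and $B_Q:=DB$ at a root of $D$ of multiplicity $s$ is exactly $s$, because $C$ is primitive. Hence the safe fiber estimate (available since $n+s\le 2r-1\le p-2$) gives the vertical budget
\[
 N_{\mathrm{vert}}\le 3m .
\]

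For the zeros on $C$, we use Lemma~\ref{lem:persistence-vertical-multiplicity}. Its hypotheses hold: $\deg(DB)\le q<p$, and the jets $Q_1,\ldots,Q_{m+1}$ are available because $n+m+1\le 2n\le 2r\le p-3$. So there is a least $j\in\{1,\ldots,m+1\}$ with $C\nmid Q_j$. Since $C$ is absolutely irreducible, B\'ezout over $\overline{\F}_p$ applied to $C$ and $Q_j$ gives
\[
 \#(Z(F)\cap C)\le d(n+j)\le d(n+m+1),
\]
because every exact zero annihilates $Q_j$ by Lemma~\ref{lem:exact-zero-all-jets}. If $d=1$, we instead use the odd-support line bound \eqref{eq:odd-line-bound}, giving at most $2r$ zeros on $C$. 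Then $|Z(F)|\le 2r+3m\le 2r+3(r-2)\le r(r+1)$ for $r\ge3$.

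For $d\ge2$ we get
\[
 |Z(F)|\le d(n+m+1)+3m,\qquad d+m=q\le n-1 .
\]
Substituting $d=q-m$ and maximizing over $0\le m\le q-2$, the expression $(q-m)(n+m+1)+3m$ is concave in $m$. At $m=0$ it equals $q(n+1)\le (n-1)(n+1)<n(n+1)$. The critical point sits near $m\approx (q-n+2)/2\le 1/2$, so the maximum over the admissible integers is attained at $m=0$ or $m=1$. At $m=1$ the bound is $(q-1)(n+2)+3\le (n-2)(n+2)+3=n^2-1$. Both values are below $n(n+1)\le r(r+1)$.

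The main obstacle is making sure that persistence is charged correctly. The estimate uses the worst case $j=m+1$ in B\'ezout, and the slack coming from the degree drop $q\le n-1$ has to absorb it. The computation above indicates that the slack of a full $d$ (namely $d(n+1)$ versus $d(n+m+1)$) is covered by replacing $n$ with $q\le n-1$. I would double-check the small cases $r=3$, $n=3$, $q=2$ and the boundary case $n=r=(p-1)/2$ separately, using Lemma~\ref{lem:bezout-infinity} if a tighter count on $C$ is needed.
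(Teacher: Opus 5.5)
Your argument is correct and is essentially the paper's proof: the vertical budget $3m$, Lemma~\ref{lem:persistence-vertical-multiplicity} to produce a breaking index $J\le m+1$, B\'ezout of $C$ against $Q_J$, and then the substitution $d\le n-m-1$ (your concavity computation is the same as the paper's identity $(n-m-1)(n+m+1)+3m=n^2-m^2+m-1$). The only issues are expository. Since $\deg C\ge 1+\deg B\ge2$, your $d=1$ branch is vacuous, and the small and boundary cases you want to re-check need no separate treatment; in particular, $2r+1\le p-1$ already rules out $n=(p-1)/2$.
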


\begin{proof}
Put
\[
 m=\deg D,\qquad d=\deg C,
\]
so
\[
 m+d=q\le n-1.
\]
Since \(B\) is nonconstant, \(d\ge2\), hence
\[
 m\le n-3.
\]
The vertical exceptional fibers contribute at most \(3m\) exact
zeros.

Because \(m\le n-3\),
\[
 n+m+1\le2n-2\le p-3
 \qquad\text{and}\qquad
 2(m+1)+1\le2n-3<p.
\]
Hence \(Q_{m+1}\) is available and the factorial condition in
Lemma~\ref{lem:persistence-vertical-multiplicity} is satisfied through order
\(m+1\).  If \(C\) divided every one of
\(Q_1,\ldots,Q_{m+1}\), that lemma would force a root
\(\beta\in\overline{\F}_p\) of the nonconstant polynomial \(B\) to
satisfy
\[
 \operatorname{ord}_\beta D\ge m+1,
\]
which is impossible because \(\deg D=m\).  Thus there is a first index
\(J\ge1\) for which
\[
 C\nmid Q_J,
\]
and necessarily
\[
 J\le m+1.
\]
In particular,
\[
 n+J\le n+m+1\le2n-2\le p-3,
\]
so every jet used below is available.

Every exact zero on \(C\) also annihilates \(Q_J\).
Since \(C\) is irreducible and does not divide \(Q_J\), B\'ezout gives
\[
 \#\bigl(Z(F)\cap C\bigr)
 \le
 d(n+J)
 \le
 d(n+m+1).
\]
Using
\[
 d\le n-m-1,
\]
we obtain
\[
 \begin{aligned}
 |Z(F)|
 &\le
 (n-m-1)(n+m+1)+3m\\
 &=n^2-m^2+m-1\\
 &\le n^2
 \le r(r+1).
 \end{aligned}
\]
\end{proof}

\begin{lemma}[Degree-drop polynomial graphs break by the second jet]
\label{lem:degree-drop-graph-break}
Let \(n\) be the first nonzero jet order and suppose
\[
 Q_0=D(a)\bigl(A(a)+b\bigr),
\]
where
\[
 m:=\deg D,\qquad d:=\deg A\ge2,\qquad
 m+d<n.
\]
Put
\[
 \delta:=n-m-d\ge1.
\]
If
\[
 A(a)+b\mid Q_1
 \qquad\text{and}\qquad
 d\ge\delta+2,
\]
then
\begin{equation}
 \boxed{A(a)+b\nmid Q_2.}
 \label{eq:degree-drop-graph-break}
\end{equation}
\end{lemma}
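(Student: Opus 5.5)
We will work directly with the recurrence $\partial_b^2Q_j=\partial_aQ_{j-1}$ of Lemma~\ref{lem:schrodinger-jet} and the total-degree bounds $\deg Q_j\le n+j$ of Lemma~\ref{lem:first-nonzero-jet}. Put $C:=A(a)+b$, so $B\equiv1$. Suppose $C\mid Q_1$ and write $Q_1=C\,L_1$. Since $\deg_bQ_1\le3$, we have $\deg_bL_1\le2$; write $L_1=\ell_0+\ell_1b+\ell_2b^2$. We will expand $\partial_b^2(CL_1)$ and compare with
\[
 \partial_aQ_0=D'(A+b)+DA'.
\]
The $b^1$ and $b^0$ coefficients determine $\ell_2$ and $\ell_1$ explicitly:
\[
 \ell_2=\frac{D'}6,\qquad
 \ell_1=\frac{D'A}{3}+\frac{DA'}{2}.
\]
This is the $B=1$ case of \eqref{eq:top-b-persistence}.

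Next we bring in the total-degree constraint. Since $\deg Q_1\le n+1$ and $\deg C=d$, we get $\deg L_1\le n+1-d$, so $\deg\ell_1\le n-d=m+\delta$. The explicit $\ell_1$, however, has a candidate top term in degree $m+d-1$, with coefficient $\operatorname{lc}(D)\operatorname{lc}(A)\bigl(\tfrac m3+\tfrac d2\bigr)$. Because $d\ge\delta+2$, we have $m+d-1>m+\delta$, so this coefficient must vanish. That yields the congruence
\[
 2m+3d\equiv0\pmod p.
\]
This is the only way $C$ can persist to $Q_1$. The same comparison applied to the lower-order terms of $\ell_1$ that still exceed degree $m+\delta$ may add further congruences, and we will record those when $d\ge\delta+3$. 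The consequence we need is that $D'A/3+DA'/2$ is forced to lose its top degree.

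Then we assume, for contradiction, that also $C\mid Q_2$, and write $Q_2=C\,L_2$ with $\deg_bL_2\le4$ and $\deg L_2\le n+2-d$. We will solve $\partial_b^2(CL_2)=\partial_a(CL_1)$ coefficientwise in $b$, from the top down. The $b^3$ coefficient gives $u_2=D''/120$. The $b^2$ coefficient expresses the $b^3$-coefficient of $L_2$ through $A$, $D$, their derivatives and $\ell_1,\ell_2$. The $b^1$ coefficient then expresses the $b^2$-coefficient of $L_2$ similarly.

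The degree bound applies to each $b^i$-coefficient of $L_2$: it has $a$-degree at most $n+2-d-i=m+\delta+2-i$. Each formal top term, of degree roughly $m+2d-2$ or $m+2d-3$, must therefore vanish. Its coefficient is $\operatorname{lc}(D)\operatorname{lc}(A)^2$ times a linear form in $m$ and $d$, which we compute from the recurrence. Combining this second congruence with $2m+3d\equiv0$ should force $d\equiv0$ or $m+d\equiv0\pmod p$. Both are impossible, since $2\le d\le m+d<n\le(p-1)/2$. Where a coefficient might itself vanish, we will use the hypothesis $d\ge\delta+2$, together with $\deg(A^2)=2d$, to keep track of which terms are genuinely top-degree.

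We expect the main obstacle to be this second-jet bookkeeping. We must identify which monomial of $L_2$ overshoots the allowed degree $m+\delta+2-i$ under the sole hypothesis $d\ge\delta+2$, and obtain its coefficient exactly enough that the resulting congruence is independent of $2m+3d\equiv0$. We will first try the $b^2$-coefficient of $L_2$. Its formal degree is about $m+2d-2$, which exceeds $m+\delta$ by $2d-\delta-2\ge d\ge2$, so it gives the most slack. If the linear forms turn out to be proportional, we will fall back to the $b^1$-coefficient or to the next-lower degree of the $b^2$-coefficient.
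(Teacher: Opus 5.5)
What is missing is the contradiction at the second jet, and that is where the lemma is actually decided. Your first half matches the paper exactly: the recurrence gives $\ell_2=D'/6$ and $\ell_1=D'A/3+DA'/2$, and $\deg\ell_1\le m+\delta<m+d-1$ forces $2m+3d\equiv0$. After that you only announce a second congruence, and your expectations about it are inaccurate.

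The natural coefficient to use is the $b^3$-coefficient of $L_2$, not the $b^2$-coefficient. Compare $b^2$-coefficients in $\partial_b^2(CL_2)=\partial_a(CL_1)$ and use $[b^4]L_2=D''/120$. This gives
\[
 120\,[b^3]L_2=4AD''+10D'A'+5DA'' .
\]
Its formal degree is $m+d-2$, not $m+2d-2$. The degree bound allows only $m+\delta-1$, and the overshoot $m+d-2>m+\delta-1$ is precisely the hypothesis $d\ge\delta+2$.

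The leading coefficient then gives $4m(m-1)+10md+5d(d-1)\equiv0$. This is a quadratic relation, not a linear one. Substituting $2m\equiv-3d$ turns it into $d(1-d)\equiv0$. So the contradiction is that $d\not\equiv0,1\pmod p$, which holds because $2\le d<p$; it is not ``$d\equiv0$ or $m+d\equiv0$''.

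You should also record that the hypotheses force $n=m+d+\delta\ge4$. Since $n\le(p-1)/2$, this gives $p\ge11$, so $Q_2$ is available and $6$, $20$, $120$ are invertible.

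Your preferred $b^2$-coefficient route does work, but it gains nothing. That coefficient involves $\ell_0'$, which the recurrence never determines. This is harmless only because $\deg\ell_0'\le m+\delta<m+2d-2$. The route also needs the $b^3$-coefficient formula anyway. Its top coefficient is
\[
 [a^{m+2d-2}]\bigl(6\,[b^2]L_2\bigr)=\operatorname{lc}(D)\operatorname{lc}(A)^2\Bigl[\tfrac16(2m+3d)(m+2d-1)-\tfrac1{20}\bigl(4m(m-1)+10md+5d(d-1)\bigr)\Bigr],
\]
which, after the first resonance, reduces to the same condition as above. The extra degree slack gives no independent information, and your fallback about proportional linear forms never arises.
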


\begin{proof}
Write
\[
 C=A+b,\qquad Q_1=C\,L,
 \qquad
 L=l_0+l_1b+l_2b^2.
\]
Since
\[
 \deg L\le n+1-d=m+\delta+1,
\]
one has
\[
 \deg l_1\le m+\delta.
\]
The recurrence
\[
 \partial_b^2Q_1=\partial_aQ_0
\]
gives
\[
 l_2=\frac{D'}6,
 \qquad
 l_1=\frac12DA'+\frac13D'A.
\]
If \(D\) and \(A\) have leading coefficients \(D_m,A_d\), then the
coefficient of \(a^{m+d-1}\) in \(l_1\) is
\[
 \frac{D_mA_d}{6}(3d+2m).
\]
Because \(d\ge\delta+2\), the nominal degree \(m+d-1\) is strictly
larger than the allowed degree \(m+\delta\).  Hence persistence through
\(Q_1\) forces the resonance
\begin{equation}
 3d+2m=0
 \qquad\text{in }\F_p.
 \label{eq:first-graph-resonance}
\end{equation}

Suppose, for contradiction, that \(C\mid Q_2\).  Write
\[
 Q_2=C\,M,
 \qquad
 M=m_0+m_1b+m_2b^2+m_3b^3+m_4b^4.
\]
The hypotheses force \(n\ge4\).  In the standing parabolic range
\(n\le\lfloor |R|/2\rfloor\le(p-1)/2\), this implies \(p\ge11\) and
\(n+2\le p-2\).  Thus \(Q_2\) is available, and the denominators
\(6\) and \(120\) appearing below are invertible in \(\F_p\).
Comparing the \(b^3\)- and \(b^2\)-coefficients in
\[
 \partial_b^2Q_2=\partial_aQ_1
\]
gives
\[
 m_4=\frac{D''}{120},
\]
and
\[
 m_3=
 \frac{
 4AD''+10D'A'+5DA''
 }{120}.
\]
Since
\[
 \deg M\le n+2-d=m+\delta+2,
\]
one has
\[
 \deg m_3\le m+\delta-1.
\]
But the nominal degree of the displayed numerator is \(m+d-2\),
strictly larger because \(d\ge\delta+2\).  Therefore its leading
coefficient must vanish:
\begin{equation}
 4m(m-1)+10md+5d(d-1)=0
 \qquad\text{in }\F_p.
 \label{eq:second-graph-resonance}
\end{equation}
Using \eqref{eq:first-graph-resonance} to substitute
\(m=-3d/2\), the left-hand side of
\eqref{eq:second-graph-resonance} becomes
\[
 d(1-d).
\]
Here \(d\ge3\) and \(d<p\), so this is nonzero in \(\F_p\), a
contradiction.
\end{proof}

\subsection{Restriction to rational components}

The early-jet regime naturally leads to restrictions of $F$ to curves that are linear in $b$.

\begin{theorem}[Rational-component restriction]
\label{thm:rational-component-restriction}
Let
\[
 C(a,b)=A(a)+bB(a)\in\F_p[a,b]
\]
be irreducible and nonvertical of degree $d\ge2$.  For actual support $|R|=k\le p-1$,
\begin{equation}
 \boxed{\#\bigl(Z(F)\cap C(\F_p)\bigr)\le d(k-1).}
 \label{eq:rational-component-restriction}
\end{equation}
In particular, an irreducible conic contains at most $2(k-1)$ exact zeros.
\end{theorem}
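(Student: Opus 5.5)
The plan is an induction on $k$ that combines the rank reduction of Lemma~\ref{lem:complex-to-cyclotomic} with the cyclotomic jets of Lemmas~\ref{lem:first-nonzero-jet}--\ref{lem:exact-zero-all-jets}. The exact zeros on $C$ are then controlled by B\'ezout against the first jet not divisible by $C$.

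\emph{Step 1: the curve is a graph.} Since $C$ is irreducible, nonvertical and of degree $d\ge2$, it is primitive in $b$: a common factor of $A$ and $B$ would split off a vertical component, and $B\equiv0$ is excluded by nonverticality. In particular $A$ and $B$ have no common root, so
\[
 C(\F_p)=\{(\alpha,-A(\alpha)/B(\alpha)):B(\alpha)\ne0\}.
\]
Moreover $A/B$ is not an affine polynomial in $a$, for otherwise $C$ would be a line. Hence for distinct $x,y\in R$ the phase difference $(x-y)\bigl(a(x+y)-A/B\bigr)$ is a nonconstant rational function of $a$. So the restricted phases $\varphi_x(a)=ax^2-xA(a)/B(a)$ are pairwise distinct modulo constants.

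\emph{Step 2: reduction to cyclotomic coefficients.} Apply Lemma~\ref{lem:complex-to-cyclotomic} to the set $Z:=Z(F)\cap C(\F_p)$.
\begin{itemize}
\item If the rank is at most $k-2$, we obtain a nonzero $c'$ on an actual support $R'$ with $|R'|<k$ that still vanishes on $Z$. Induction then gives $|Z|\le d(|R'|-1)\le d(k-1)$. The base case $|R'|=1$ has no zeros at all.
\item Otherwise the rank is $k-1$, so $c$ is a scalar multiple of a primitive vector in $\mathcal O^R$.
\end{itemize}
In the second case, Lemma~\ref{lem:exact-zero-all-jets} shows that every point of $Z$ is a common zero of $C$ and of every jet $H_j$ with $j\le p-2$. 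Here $\deg H_j\le j$ by Lemma~\ref{lem:first-nonzero-jet}. Suppose we find an index $j\le k-1$ with $C\nmid H_j$. Because $C$ is absolutely irreducible, as in the paragraph on the canonical nonvertical factor, B\'ezout gives $|Z|\le d\cdot\deg H_j\le d(k-1)$. Note that $k-1\le p-2$, so these jets are available.

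\emph{Step 3 (main obstacle): $C$ cannot divide all of $H_0,\dots,H_{k-1}$.} Assume it does. Substituting $b=-A/B$ gives the following congruence in $\F_p(a)[T]/(T^{k})$:
\[
 \sum_{x\in R}C_x(T)\,e^{T\varphi_x(a)}\equiv0 .
\]
I would first try to imitate the moment argument of Lemma~\ref{lem:first-nonzero-jet}. Let $s$ be the minimal $T$-adic order among the $C_x(T)$, and let $R_s\subset R$ be the nonempty set of $x$ attaining it. Comparing the coefficients of $T^{s},\dots,T^{s+|R_s|-1}$ should yield $\sum_{x\in R_s}\bar c_x\varphi_x^j=0$ for $0\le j\le |R_s|-1$, where $\bar c_x\ne0$ is the leading coefficient of $C_x$. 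This is a Vandermonde system in the distinct elements $\varphi_x\in\F_p(a)$, so it forces $\bar c_x=0$, which is a contradiction. The point to check is that the terms coming from higher $T$-orders of the $C_x$ do not contaminate these coefficients. I would handle this by an induction along the $T$-adic filtration. Where that argument fails, I would use the derivation $\partial_a$, which multiplies each exponential by $T\varphi_x'$, to peel off lower-order contributions. Primitivity ($s=0$) ensures that $R_0\ne\emptyset$.

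Finally, the sharper claim for conics is simply the case $d=2$.
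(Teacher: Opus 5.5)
Your Steps~1--2 agree with the paper's proof: induction on $k$ via Lemma~\ref{lem:complex-to-cyclotomic}, then primitive cyclotomic data. Your closing count is also sound. Since $C$ is primitive and linear in $b$, $C\mid H_j$ is equivalent to $J_j(a):=H_j(a,-A(a)/B(a))=0$ in $\F_p(a)$. B\'ezout against a nonzero $H_j$ with $C\nmid H_j$ and $j\le k-1$ then gives the same $d(k-1)$ as the paper's count of roots of the one-variable polynomial $B^nJ_n$.

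The gap is Step~3, which is the entire content of the theorem and which you leave open. The mechanism you sketch does not work as stated. The $T^j$-coefficient is $J_j=\sum_x\sum_{\ell\le j}c_{x,\ell}\varphi_x^{j-\ell}/(j-\ell)!$. Comparing coefficients in $T$ cannot strip off the $\ell\ge1$ terms, because treating the $\varphi_x$ merely as distinct elements of the field $\F_p(a)$ discards exactly the information that makes the claim true. If the $c_{x,\ell}$ were allowed to lie in $\F_p(a)$, the system $J_0=\cdots=J_{k-1}=0$ would have solutions with nonzero leading coefficients. For instance, with $k=2$ and $R=\{x,y\}$, take $c_{x,0}=1$, $c_{y,0}=-1$, $c_{x,1}=\varphi_y-\varphi_x$, $c_{y,1}=0$. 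So the identity $\sum_x\bar c_x\varphi_x^j=0$ you hope to extract does not follow from a $T$-adic filtration argument, and the $\partial_a$ fallback is not an argument.

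The missing idea is to use that the $c_{x,\ell}$ are constants in $\F_p$ while the $\varphi_x$ grow in $a$, and to read off a leading term in $a$ to which only $\ell=0$ contributes.
\begin{itemize}
\item If $B$ is constant, write the curve as $b=P(a)$ with $\deg P=d\ge2$. Then $\varphi_x=ax^2+P(a)x$ has $a^d$-coefficient $P_dx$, and the terms with $\ell\ge1$ have degree at most $d(j-\ell)<dj$. Hence the coefficient of $a^{dj}$ in $J_j$ is $P_d^j\sum_x c_{x,0}x^j/j!$.
\item If $B$ is nonconstant, pick a root $\alpha\in\overline{\F}_p$ of $B$ of multiplicity $m$. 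Since $A(\alpha)\ne0$, each $\varphi_x$ has a pole of order at most $m$ at $\alpha$, with $(a-\alpha)^{-m}$-coefficient $\gamma x$ for some $\gamma\ne0$. Hence the coefficient of $(a-\alpha)^{-mj}$ in $J_j$ is $\gamma^j\sum_x c_{x,0}x^j/j!$.
\end{itemize}
In both cases $J_0=\cdots=J_{k-1}=0$ gives $\sum_x c_{x,0}x^j=0$ for $0\le j\le k-1$. These are moments in the support points $x$, not in the $\varphi_x$. The Vandermonde determinant on $R$ together with primitivity then gives the contradiction; this is how the paper closes the argument.
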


\begin{proof}
We argue by induction on $k$.  For $k=1$ the sum has no zeros, so the claim is immediate.  Assume $k\ge2$, put
\[
 Z_C:=Z(F)\cap C(\F_p),
\]
and let $r_C$ be the rank of the evaluation rows indexed by $Z_C$.
Since the coefficient vector lies in their common kernel, $r_C\le k-1$.
If $r_C\le k-2$, Lemma~\ref{lem:complex-to-cyclotomic} gives a nonzero
coefficient vector on a strictly smaller actual support preserving every
zero in $Z_C$; the induction hypothesis applies.  If $r_C=k-1$, the same
lemma scales the coefficient vector into $K$.  Thus it remains only to
treat primitive cyclotomic coefficients.

If $B$ is constant, write the curve as $b=P(a)$ with $\deg P=d$.  For
\[
 \mathcal Y(T;a)=\sum_{x\in R}C_x(T)e^{T(ax^2+P(a)x)}=\sum J_j(a)T^j,
\]
one has $\deg J_j\le dj$.  If $J_0=\cdots=J_{k-1}=0$, the coefficient of $a^{dj}$ in $J_j$ gives $\sum_xc_{x,0}x^j=0$ for $0\le j\le k-1$, contradicting Vandermonde.  Thus the first nonzero $J_n$ has $n\le k-1$, and all exact zeros are roots of a nonzero polynomial of degree at most $dn\le d(k-1)$.

Assume $B$ is nonconstant.  Since $\gcd(A,B)=1$, affine points satisfy $B(a)\ne0$ and
\[
 b=-A(a)/B(a).
\]
Set $H_x(a)=ax^2-A(a)x/B(a)$ and define $\mathcal Y$ as above.  Then $B(a)^jJ_j(a)$ has degree at most $dj$.  Choose a root $\alpha$ of $B$ of multiplicity $m$ in $\overline{\F}_p$; because $A(\alpha)\ne0$, the principal part of $H_x$ at $\alpha$ is $\gamma x(a-\alpha)^{-m}$.  If $J_0,\ldots,J_{k-1}$ vanished, then $J_0=0$ gives $\sum_xc_{x,0}=0$, while for $1\le j\le k-1$ the highest-pole coefficient of $J_j$ gives $\sum_xc_{x,0}x^j=0$.  These $k$ moments contradict Vandermonde invertibility.  Therefore $n\le k-1$, and every exact zero on $C(\F_p)$ is a root of the nonzero polynomial $B^nJ_n$ of degree at most $dn$.
\end{proof}

\begin{remark}[A graph-spectrum statement]
\label{rem:graph-spectrum-rational}
The proof of Theorem~\ref{thm:rational-component-restriction} does not
use the quadratic term \(x^2\) in any essential way.  Replacing
\(ax^2\) by \(a\phi(x)\) for an arbitrary function
\(\phi:\F_p\to\F_p\) leaves the pole argument and the Vandermonde
moments unchanged.  Consequently the same estimate
\[
 \#\bigl(Z(F_\phi)\cap C(\F_p)\bigr)\le d(k-1)
\]
holds for primitive graph-spectrum data
\[
 F_\phi(a,b)=\sum_{x\in R}c_x\omega^{a\phi(x)+bx}.
\]
This is the part of the component method that is genuinely
graph-theoretic rather than quadratic.
\end{remark}

\begin{proposition}[Degree-drop graph estimate]
\label{prop:degree-drop-graph-estimate}
Let the actual support have size \(k\le2t+1\), with \(t\ge3\),
and suppose the exact zero-row system has rank \(k-1\).  After scaling
the coefficient vector into \(K=\Q(\omega)\) as in
Lemma~\ref{lem:complex-to-cyclotomic}, let \(n\le\lfloor k/2\rfloor\le t\)
be the first nonzero cyclotomic jet order and suppose
\[
 Q_0=D(a)\bigl(A(a)+b\bigr),
 \qquad
 m:=\deg D,\quad d:=\deg A\ge2,\quad
 m+d<n.
\]
Then
\begin{equation}
 \boxed{|Z(F)|\le t(t+1).}
 \label{eq:degree-drop-graph-estimate}
\end{equation}
\end{proposition}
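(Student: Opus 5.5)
We split the zero set into the zeros on the nonvertical graph component $C=A(a)+b$ and the zeros on the exceptional vertical fibers over the roots of $D$, and bound each part separately.

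\textbf{Vertical part.} As in the proof of Theorem~\ref{thm:full-degree-odd-bound}, a root of $D$ of multiplicity $s$ is a common root of the coefficients of $Q_0$ in $b$, namely $DA$ and $D$, of common order exactly $s$. The recurrence argument of Lemma~\ref{lem:cyclotomic-zero-rigorous} (Case~1) therefore bounds that fiber by $2s+1$. Summing over the roots of $D$ gives at most $3m$ zeros off $C$.

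\textbf{Zeros on $C$.} The key step is to find an early jet that $C$ does not divide, and then apply B\'ezout with $C$ of degree $d$. We split according to persistence.

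\emph{Case $C\nmid Q_0$.} This is impossible, since $C$ is a factor of $Q_0$. So we start at $Q_1$.

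\emph{Case $C\nmid Q_1$.} B\'ezout gives at most $d(n+1)$ zeros on $C$.

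\emph{Case $C\mid Q_1$ but $C\nmid Q_2$.} B\'ezout gives at most $d(n+2)$ zeros on $C$.

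\emph{Case $C\mid Q_1$ and $C\mid Q_2$.} Lemma~\ref{lem:degree-drop-graph-break} rules this out when $d\ge\delta+2$, where $\delta=n-m-d\ge1$. So this case can only occur when $d\le\delta+1$. For that regime we use Theorem~\ref{thm:rational-component-restriction}, which bounds the zeros on $C$ by $d(k-1)\le 2td$. Small $d$ relative to $n$ should keep this within budget.

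\textbf{Arithmetic.} In the first two cases, using $m+d\le n-1$,
\[
 |Z(F)|\le d(n+2)+3m\le d(n+2)+3(n-1-d)=d(n-1)+3n-3.
\]
With $d\le n-1-m\le n-1$ this is at most $(n-1)^2+3n-3=n^2+n-2\le t(t+1)$. (Availability of $Q_2$ needs $n+2\le p-2$, which holds as in Lemma~\ref{lem:degree-drop-graph-break}.)

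In the residual regime $d\le\delta+1$ we have $d\le(n-m+1)/2$, so $2d\le n+1$. The rational-component bound then gives
\[
 |Z(F)|\le 2td+3m\le t(n+1)+3m.
\]
This is too weak in general.

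\textbf{Main obstacle.} The residual regime is the hard part, and it needs a better bound there. The first thing I would try is Lemma~\ref{lem:persistence-vertical-multiplicity}. It does not apply directly, because here $B=1$ is constant. Instead I would redo its top-coefficient computation for graphs: persistence through $Q_J$ forces the resonance conditions (\ref{eq:first-graph-resonance}) and (\ref{eq:second-graph-resonance}) at each order. Solving them shows the degree constraint $\deg m_{2j-1}\le m+\delta-j+1$ fails once $j\ge d-\delta+\ldots$, so the component breaks at some order $J\le\delta+1$. B\'ezout then gives
\[
 |Z(F)|\le d(n+J)+3m\le d(n+\delta+1)+3m.
\]
With $m=n-d-\delta$ this is a quadratic in $(d,\delta)$, and maximizing it over the constraints $d\ge2$, $\delta\ge1$, $d+\delta\le n$ should give at most $n(n+1)\le t(t+1)$.

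If the general breaking order cannot be established, the fallback is to check the extreme values $d=2$ and $\delta=1$ directly with the explicit $l_1,l_2$ formulas from the proof of Lemma~\ref{lem:degree-drop-graph-break}.
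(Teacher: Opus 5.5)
Your vertical budget $3m$, the case $C\nmid Q_1$, and the case $C\mid Q_1$, $C\nmid Q_2$ are all correct. Splitting on whether $C\mid Q_2$, rather than on $d\ge\delta+2$ versus $d\le\delta+1$ as the paper does, is harmless, because Lemma~\ref{lem:degree-drop-graph-break} shows that persistence through $Q_2$ forces $d\le\delta+1$. The gap is in the residual regime. You call the bound $2td+3m$ from Theorem~\ref{thm:rational-component-restriction} ``too weak in general'' and replace it with a general breaking-order claim ($C\nmid Q_J$ for some $J\le\delta+1$). You only gesture at its derivation, leaving the threshold as $j\ge d-\delta+\ldots$, and nothing in the paper supplies it. As written, the proof is therefore incomplete exactly in the case you call the main obstacle.

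The rational-component bound is in fact sufficient; you lost it by discarding $m$. You already derived $2d\le n-m+1$, i.e.\ $m\le n+1-2d$. Keep this and use it on the vertical term:
\[
 2td+3m\le 2td+3(n+1-2d)=2(t-3)d+3(n+1)\le (t-3)(n+1)+3(n+1)=t(n+1)\le t(t+1).
\]
The middle step uses $t\ge3$ and $2d\le n+1$. The point is that $d$ and $m$ draw on the same degree budget. Raising $d$ by one adds $2t$ to the component bound but lowers the cap on $3m$ by $6$. Since $t\ge3$, the bound is nondecreasing in $d$, so the extreme $2d=n+1$ is a worst case. The paper writes the same computation as $m+2d\le t+1$ and $t(t+1)-\bigl(2td+3(t+1-2d)\bigr)=(t-3)(t+1-2d)\ge0$. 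With this one line, no higher-order resonance analysis is needed.
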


\begin{proof}
Put
\[
 \delta=n-m-d\ge1,
 \qquad
 C=A+b.
\]
The vertical exceptional fibers contribute at most \(3m\) exact zeros.

If \(C\nmid Q_1\), B\'ezout gives
\[
 \#(Z(F)\cap C)\le d(n+1).
\]
Since
\[
 d\le n-m-1,
\]
\[
 |Z(F)|
 \le
 (n-m-1)(n+1)+3m
 =
 n^2-1+m(2-n)
 \le n^2
 \le t(t+1).
\]

Assume \(C\mid Q_1\).
First suppose
\[
 d\le\delta+1.
\]
The rational-component restriction theorem gives
\[
 \#(Z(F)\cap C)
 \le
 d(|R|-1)
 \le2td.
\]
Also
\[
 n=m+d+\delta\le t,
 \qquad
 \delta\ge d-1,
\]
so
\[
 m+2d\le t+1.
\]
Therefore
\[
 |Z(F)|
 \le
 2td+3m
 \le
 2td+3(t+1-2d).
\]
The difference from \(t(t+1)\) is
\[
 (t-3)(t+1-2d)\ge0.
\]
Thus \eqref{eq:degree-drop-graph-estimate} holds.

It remains to consider
\[
 d\ge\delta+2.
\]
Lemma~\ref{lem:degree-drop-graph-break} gives \(C\nmid Q_2\).
Hence
\[
 \#(Z(F)\cap C)\le d(n+2).
\]
For the vertical exceptional fibers we deliberately use the robust
multiplicity budget from the main zero-bound proof.  If \(\alpha\) is a
root of \(D\) of multiplicity \(s\), the common multiplicity of the two
coefficients of the leading pencil at \(\alpha\) is exactly \(s\); hence
that fiber contains at most \(2s+1\) exact zeros.  Summing over the roots of
\(D\) gives at most \(3m\) vertical zeros.  This estimate remains valid even
when \(D\) has repeated roots, where the quotient \(L(\alpha,b)\) may
degenerate.
Using again \(d\le n-m-1\),
\[
 \begin{aligned}
 |Z(F)|
 &\le d(n+2)+3m\\
 &\le(n-m-1)(n+2)+3m\\
 &=n(n+1)-2-m(n-1)\\
 &\le n(n+1)\\
 &\le t(t+1).
 \end{aligned}
\]

\end{proof}

\begin{proposition}[Cyclotomic nonlinear-component estimates]
\label{prop:cyclotomic-nonlinear-estimates}
Let \(n\) be the first nonzero cyclotomic jet and factor
\[
 Q_0=D(a)C(a,b),
\]
where \(C\) is primitive, nonvertical, irreducible, and
\[
 d:=\deg C\ge2.
\]
Then the following hold.

\begin{enumerate}
\item If \(\deg Q_0=n\), then
\[
 |Z(F)|\le n(n+1).
\]

\item If \(\deg Q_0<n\) and the coefficient of \(b\) in \(C\) is
nonconstant, then
\[
 |Z(F)|\le n^2.
\]
\end{enumerate}
\end{proposition}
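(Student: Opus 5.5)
The plan is to re-run, in the purely cyclotomic setting, the two counting arguments that were carried out inside Theorems~\ref{thm:full-degree-odd-bound} and~\ref{thm:degree-drop-nonconstant-B}. Those arguments never used the rank hypothesis or the odd support size except to bound \(n\le r\) at the end. Throughout, write \(Q_0=A(a)+bB(a)\) with \(D=\gcd(A,B)\), \(m=\deg D\), and \(Q_0=D\,C\).

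The zeros split into two parts. Off \(C\), every exact zero lies on a vertical fiber over a root of \(D\). On the fiber over a root of multiplicity \(s\), the common order of the two pencil coefficients is exactly \(s\), so the fiber estimate from Lemma~\ref{lem:cyclotomic-zero-rigorous} (iterated recurrence plus \(\deg_bQ_s\le 2s+1\)) gives at most \(2s+1\) zeros there. Summing over the roots of \(D\) gives a vertical budget of at most \(3m\). On \(C\), since \(C\) is absolutely irreducible (the canonical-factor paragraph), Lemma~\ref{lem:exact-zero-all-jets} and B\'ezout bound the zeros on \(C\) by \(d\cdot\deg Q_J\le d(n+J)\), for any index \(J\) with \(C\nmid Q_J\).

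For part (1), where \(\deg Q_0=n\) and \(d=n-m\), the index \(J=m+1=n-d+1\) works by the component erosion statement \eqref{eq:full-degree-component-death}, and Theorem~\ref{thm:full-degree-component-erosion} also guarantees that this jet is available. This yields the bound \((n-m)(n+m+1)+3m=n(n+1)+1-(m-1)^2\), which is at most \(n(n+1)\) unless \(m=1\).

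The case \(m=1\) is the one real obstacle, and I would repeat the argument from Theorem~\ref{thm:full-degree-odd-bound}.
\begin{itemize}
\item If \(C\nmid Q_1\), then \(J=1\) gives at most \((n-1)(n+1)+3\) zeros. Since \(d=n-1\ge2\) forces \(n\ge3\), this is at most \(n(n+1)\).
\item If \(C\mid Q_1\), write \(Q_1=CL\) with \(\deg L\le 2\). Erosion gives \(C\nmid Q_2\), so \(C\) carries at most \((n-1)(n+2)\) zeros. On the single exceptional fiber \(a=\alpha\), the identity \(\partial_b^2Q_1(\alpha,b)=D'(\alpha)C(\alpha,b)\not\equiv0\) shows \(L(\alpha,b)\not\equiv0\). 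So at most two zeros lie off \(C\), and the total is at most \(n^2+n\).
\end{itemize}
If \(B\) happens to be constant in part (1), the same bounds still hold, since erosion does not need \(B\) nonconstant.

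For part (2), where \(q=\deg Q_0<n\) and \(B\) is nonconstant, we have \(d\ge2\), hence \(m\le n-3\). Lemma~\ref{lem:persistence-vertical-multiplicity} produces \(J\le m+1\) with \(C\nmid Q_J\). Its hypotheses \(2(m+1)+1<p\) and \(\deg(DB)<p\) hold because \(n\le (p-1)/2\), and \(Q_{m+1}\) is available since \(n+m+1\le 2n-2\le p-3\). With \(d\le n-m-1\), the total is at most \((n-m-1)(n+m+1)+3m=n^2-m^2+m-1\le n^2\).

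The only points that need checking are these availability and factorial conditions; all of them reduce to \(n\le(p-1)/2\), which Lemma~\ref{lem:first-nonzero-jet} supplies.
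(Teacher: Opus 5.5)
Your proposal is correct and matches the paper's proof step for step. The decomposition is the same: a vertical budget of $3m$ plus B\'ezout on $C$ against the first jet it fails to divide. In part (1) you use erosion at index $m+1$ and treat $m=1$ separately via $L(\alpha,b)\not\equiv0$, and in part (2) the persistence lemma supplies $J\le m+1$, exactly as the paper does.

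The only blemish is notational. In part (2) your ``$B$'' must denote the $b$-coefficient of $C$, as in Lemma~\ref{lem:persistence-vertical-multiplicity}, not the $b$-coefficient of $Q_0$ that you introduced at the start.
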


\begin{proof}
For the first assertion put \(m=\deg D\), so \(d=n-m\).
The full-degree component-erosion theorem gives
\[
 C\nmid Q_{m+1}.
\]
The vertical fibers contribute at most \(3m\), while B\'ezout gives
\[
 \#(Z(F)\cap C)\le(n-m)(n+m+1).
\]
Thus
\[
 |Z(F)|
 \le
 n(n+1)+1-(m-1)^2.
\]
Only \(m=1\) requires a one-point saving.
If \(C\nmid Q_1\), use \(Q_1\) to get
\[
 (n-1)(n+1)+3=n^2+2\le n(n+1).
\]
If \(C\mid Q_1\), then \(C\nmid Q_2\), and write \(Q_1=CL\) with
\(\deg L\le2\).  Here \(m=1\), so \(D\) has a unique simple root
\(\alpha\in\F_p\).  We claim that \(L(\alpha,b)\) is not the zero
polynomial.  Indeed, if it were, then \(Q_1(\alpha,b)\equiv0\) and hence
\(\partial_b^2Q_1(\alpha,b)\equiv0\).  But the recurrence gives
\[
 \partial_b^2Q_1(\alpha,b)
 =\partial_a(DC)(\alpha,b)
 =D'(\alpha)C(\alpha,b).
\]
The first factor is nonzero because the root is simple, and the second is a
nonzero polynomial in \(b\) because \(C\) is primitive.  This contradiction
proves the claim.  Consequently \(L(\alpha,b)\), of degree at most two in
\(b\), has at most two roots.  Thus at most two exact zeros on the exceptional
vertical fiber lie outside \(C\), and
\[
 (n-1)(n+2)+2=n(n+1).
\]

For the second assertion let
\[
 q=\deg Q_0<n,\qquad m=\deg D,\qquad d=q-m.
\]
Because the coefficient of \(b\) in \(C\) is nonconstant, \(d\ge2\),
and the degree drop gives \(m\le n-3\).  Hence
\[
 n+m+1\le2n-2\le p-3,
 \qquad
 2(m+1)+1\le2n-3<p.
\]
Moreover the coefficient of \(b\) in \(Q_0\) has degree below \(p\).
Thus every hypothesis of
Lemma~\ref{lem:persistence-vertical-multiplicity} is satisfied through
\(Q_{m+1}\), and that lemma gives a first failure index
\(J\le m+1\) with \(C\nmid Q_J\).  Using
\(n+J\le n+m+1\) in the subsequent B\'ezout estimate, and since
\[
 d\le n-m-1,
\]
B\'ezout plus the vertical budget gives
\[
 \begin{aligned}
 |Z(F)|
 &\le
 (n-m-1)(n+m+1)+3m\\
 &=n^2-m^2+m-1\\
 &\le n^2.
 \end{aligned}
\]
\end{proof}

\begin{theorem}[Parity-sensitive zero theorem]
\label{thm:parity-sensitive-zero}
Let \(p\ge5\) be prime.

\begin{enumerate}
\item If the actual support has odd size
\[
 |R|=2r+1\le p-1,
\]
then
\begin{equation}
 \boxed{|Z(F)|\le r(r+1).}
 \label{eq:odd-uniform-zero}
\end{equation}

\item If the actual support has even size
\[
 |R|=2r\le p-1
\]
and
\begin{equation}
 |Z(F)|>r(r+1),
 \label{eq:even-large-threshold}
\end{equation}
then there exists a nonvertical affine line \(\ell\) such that
\begin{equation}
 F|_\ell\equiv0,
 \qquad
 \boxed{|Z(F)\setminus\ell|\le r(r-1).}
 \label{eq:even-line-structure}
\end{equation}
\end{enumerate}
\end{theorem}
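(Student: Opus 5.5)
We argue by a simultaneous induction on the support size \(k=|R|\), proving the odd statement (1) for \(k=2r+1\) and the even statement (2) for \(k=2r\) together. The small cases are already available: \(k=1\) has no zeros, \(k=3\) is Theorem~\ref{thm:three-point-exact} (bound \(2=1\cdot2\)), \(k=4\) is Proposition~\ref{prop:four-point-large-zero-structure}, \(k=5\) is Theorem~\ref{thm:five-point-six-zero} (for \(p=5\) the constraint \(k\le p-1\) excludes it), and \(k=6\) is Proposition~\ref{prop:six-point-ten-threshold}. For larger \(k\), let \(r_Z\) be the zero-row rank and split according to Lemma~\ref{lem:complex-to-cyclotomic}.

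\textbf{Rank-deficient branch.} If \(r_Z\le k-2\), there is a coefficient vector \(c'\) on an actual support \(R'\subsetneq R\) with \(Z(F)\subseteq Z(F')\). First suppose \(|R'|\) is odd, say \(2s+1\) with \(s\le r\) (resp.\ \(s\le r-1\) when \(k=2r\)). Then induction gives \(|Z(F)|\le s(s+1)\le r(r+1)\), which suffices in either parity. Next suppose \(|R'|=2s\) with \(s\le r\). Either \(|Z(F)|\le s(s+1)\le r(r+1)\) and we are done, or the even induction hypothesis yields a nonvertical line \(\ell\) with \(F'|_\ell\equiv0\) and at most \(s(s-1)\) zeros of \(F'\) off \(\ell\).

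In the odd case, Lemma~\ref{lem:affine-line-zero-bound} bounds the zeros of \(F\) on \(\ell\) by \(\min\{2r,(p-1)/2\}\), so \(|Z(F)|\le 2r+s(s-1)\le 2r+r(r-1)=r(r+1)\). In the even case \(k=2r\), we have \(s\le r-1\), hence \(|Z(F)\setminus\ell|\le (r-1)(r-2)\). If \(F\) does not vanish on \(\ell\), it has at most \(2r-1\) zeros there, giving \(|Z(F)|\le 2r-1+(r-1)(r-2)<r(r+1)\), which contradicts the hypothesis. So \(F|_\ell\equiv0\), and the off-line count \((r-1)(r-2)\le r(r-1)\) is the required bound.

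\textbf{Full-rank branch.} If \(r_Z=k-1\), scale \(c\) into \(K\), take the first jet \(n\le r\), and write \(Q_0=A+bB\).

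If \(B\equiv0\), the safe vertical estimate gives at most \(3n\le r(r+1)\), together with the boundary case \(p-2\). That boundary case needs \(k=p-1\) even, and must be handled separately when \(r(r+1)<p-2\). There all zeros lie on one vertical fiber, so I expect a direct argument that no zero line occurs and that the count is at most \(k-1\) by Lemma~\ref{lem:affine-line-zero-bound}. Since \(p-2=2r-1\), the hypothesis \(|Z|>r(r+1)\) is then impossible for \(r\ge2\).

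If \(B\not\equiv0\), factor \(Q_0=D\,C\) with \(m=\deg D\) and \(d=\deg C\).
\begin{itemize}
\item When \(d\ge2\), Proposition~\ref{prop:cyclotomic-nonlinear-estimates} applies if \(\deg Q_0=n\) or the \(b\)-coefficient of \(C\) is nonconstant, and gives \(|Z|\le n(n+1)\le r(r+1)\).
\item When \(C=A+b\) is a polynomial graph with degree drop, Proposition~\ref{prop:degree-drop-graph-estimate} gives the bound, with \(t=r\).
\end{itemize}
Neither proposition uses parity, so in the even case they show the hypothesis \(|Z|>r(r+1)\) cannot occur in these subcases.

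\textbf{Main obstacle: \(C\) a nonvertical line (\(d=1\)), so \(m\le n-1\le r-1\) and the vertical budget is \(3m\).} In the odd case, odd-line rigidity puts at most \(2r\) zeros on \(C\), giving \(|Z|\le 2r+3(r-1)=5r-3\le r(r+1)\) for \(r\ge3\). In the even case, if \(F|_C\not\equiv0\) the same count gives \(5r-3\le r(r+1)\), contradicting the hypothesis for \(r\ge3\). So \(F\) vanishes on the line \(\ell=C\), and it remains to show the off-line zeros are at most \(r(r-1)\). The crude budget \(3m\le 3(r-1)\) already does this once \(r\ge3\).

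The hardest genuine work is therefore not this count but checking that every subcase has been covered. In particular, I must verify that the degree-drop case with constant \(B\) but \(d\ge2\) is exactly the graph case of Proposition~\ref{prop:degree-drop-graph-estimate}, and that its hypotheses \(t\ge3\) and \(k\le 2t+1\) hold for \(k\ge7\).
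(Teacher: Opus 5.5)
Your proposal is correct and follows the paper's proof essentially step for step. The shared structure is a simultaneous odd/even induction with the coordinate-cut reduction in the rank-deficient branch. In the full-rank branch you use the same split into $B\equiv0$, a linear component (odd-line rigidity or forced vanishing on the line, plus the $3m$ vertical budget), and nonlinear components handled by Propositions~\ref{prop:cyclotomic-nonlinear-estimates} and~\ref{prop:degree-drop-graph-estimate}.

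The slips are cosmetic. You omit the base case $k=2$, which is trivial because a two-point sum has either no zeros or a full nonvertical zero line. Also, the boundary configuration in the $B\equiv0$ case needs no separate treatment, since there $p-2=2r-1\le r(r+1)$.
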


\begin{proof}
We prove the two assertions simultaneously by induction on \(r\).
For \(r=0\), the odd assertion is the one-point case, and the sum has no
zeros.  For \(r=1\), the odd assertion is
Theorem~\ref{thm:three-point-exact}; a two-point sum has either no
zeros or an affine zero line.  For \(r=2\), the odd assertion is
Theorem~\ref{thm:five-point-six-zero}, and the even assertion is
Proposition~\ref{prop:four-point-large-zero-structure}.

Assume \(r\ge3\), and first prove the even assertion for support
\(2r\).

Let \(r_Z\) be the zero-row rank.  If \(r_Z\le2r-2\), a
coordinate-hyperplane cut gives a nonzero coefficient vector on at
most \(2r-1\) support points preserving every zero.

If the reduced actual support is one, the reduced sum has no zeros,
which is impossible because it preserves \(Z(F)\).  If the reduced actual
support is odd and at least three, say \(2s+1\) with \(1\le s\le r-1\),
the induction hypothesis gives
\[
 |Z(F)|\le s(s+1)\le r(r-1),
\]
contrary to \eqref{eq:even-large-threshold}.

If the reduced support is even, say \(2s\) with \(s\le r-1\), the
induction hypothesis yields an affine zero line for the reduced sum
and at most \(s(s-1)\) preserved zeros off it.  Hence more than
\[
 r(r+1)-s(s-1)
 \ge
 r(r+1)-(r-1)(r-2)
 =
4r-2
\]
zeros of the original sum lie on that line.  This exceeds \(2r-1\),
so Lemma~\ref{lem:affine-line-zero-bound} forces the original
support-\(2r\) sum to vanish identically on the same line.
Furthermore
\[
 |Z(F)\setminus\ell|
 \le
 s(s-1)
 \le
 r(r-1).
\]

It remains to treat the full-rank even branch \(r_Z=2r-1\).
Scale the coefficients into \(K=\Q(\omega)\), and let \(n\le r\) be
the first nonzero jet.

If \(B\equiv0\), the vertical-fiber estimate in the proof of the main
zero theorem gives at most \(3n\) zeros, except for its single boundary
configuration, where the stronger bound \(p-2\) applies.  In either
case the total is at most \(r(r+1)\).

Assume \(B\not\equiv0\), and factor
\[
 Q_0=D(a)C(a,b),
\]
where \(C\) is primitive and nonvertical.

If \(\deg C=1\), then \(C=0\) is an affine line.  If \(F\) does not
vanish identically there, the line contains at most \(2r-1\) exact
zeros.  Since \(\deg D\le n-1\le r-1\), the vertical budget is at most
\(3(r-1)\), and
\[
 (2r-1)+3(r-1)=5r-4\le r(r+1).
\]
Thus a zero set satisfying \eqref{eq:even-large-threshold} forces
\(F|_C\equiv0\).  All zeros off \(C\) then lie on vertical exceptional
fibers, so
\[
 |Z(F)\setminus C|
 \le3(r-1)
 \le r(r-1).
\]

Suppose \(\deg C\ge2\).  If \(\deg Q_0=n\), apply
Proposition~\ref{prop:cyclotomic-nonlinear-estimates}(1).
If \(\deg Q_0<n\) and the coefficient of \(b\) in \(C\) is
nonconstant, apply part (2).  If that coefficient is constant, apply
Proposition~\ref{prop:degree-drop-graph-estimate}.  Every case gives
\[
 |Z(F)|\le r(r+1),
\]
contrary to \eqref{eq:even-large-threshold}.  This proves the even
assertion.

Now prove the odd assertion for support \(2r+1\).
If \(r_Z\le2r-1\), a coordinate cut gives a vector on at most \(2r\)
support points preserving all zeros.

A reduced support of size one has no zeros.  A reduced odd support
\(2s+1\) with \(1\le s\le r-1\) has at most
\(s(s+1)<r(r+1)\) zeros by induction.
For a reduced even support \(2s\), \(s\le r\), the just-proved even
assertion gives an affine zero line and at most
\[
 s(s-1)\le r(r-1)
\]
zeros off it whenever more than \(r(r+1)\) zeros are preserved.
But the original odd-support sum has at most \(2r\) zeros on any
affine line.  Hence
\[
 |Z(F)|
 \le
 2r+r(r-1)
 =
 r(r+1),
\]
a contradiction.

Finally assume the odd zero-row system has full rank \(2r\).
Again scale into \(K\) and let \(n\le r\) be the first nonzero jet.
If \(B\equiv0\), the vertical estimate gives
\[
 |Z(F)|\le3n\le r(r+1).
\]
For \(B\not\equiv0\), factor \(Q_0=D C\).
If \(\deg C=1\), the odd line bound gives at most \(2r\) zeros on
\(C\), while the vertical budget is at most \(3(r-1)\); thus
\[
 |Z(F)|\le5r-3\le r(r+1).
\]
If \(\deg C\ge2\), there are three subcases, exactly as in the even
full-rank branch.  If \(\deg Q_0=n\),
Proposition~\ref{prop:cyclotomic-nonlinear-estimates}(1) gives
\[
 |Z(F)|\le n(n+1)\le r(r+1).
\]
If \(\deg Q_0<n\) and the coefficient of \(b\) in \(C\) is
nonconstant, Proposition~\ref{prop:cyclotomic-nonlinear-estimates}(2) gives
\[
 |Z(F)|\le n^2\le r(r+1).
\]
Finally, if that coefficient is constant, primitive normalization writes
\(C=A(a)+b\), and Proposition~\ref{prop:degree-drop-graph-estimate},
with \(t=r\), gives directly
\[
 |Z(F)|\le r(r+1).
\]
Thus every full-rank case is covered, and the induction is complete.
\end{proof}

\begin{corollary}[Combined odd-support bound]
\label{cor:combined-odd-support}
If
\[
 |R|=2r+1\le p-1,
\]
then
\begin{equation}
 \boxed{
 |Z(F)|
 \le
 \min\bigl\{
 p+2r-2,\,
 r(r+1)
 \bigr\}.
 }
 \label{eq:combined-odd-support}
\end{equation}
\end{corollary}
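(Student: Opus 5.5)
The corollary is the conjunction of two bounds that the paper has already proved, so the plan is to invoke each and take the minimum.

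For the first bound, set \(k=|R|=2r+1\). When \(k\ge3\), Theorem~\ref{thm:parabolic-zero-rigorous} gives
\[
 |Z(F)|\le p+2\left\lfloor\frac{k}{2}\right\rfloor-2=p+2r-2 .
\]
Here \(c_x\in\C^\times\) is arbitrary, and Theorem~\ref{thm:parabolic-zero-rigorous} already passes through the rank reduction of Lemma~\ref{lem:complex-to-cyclotomic}.

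For the second bound, Theorem~\ref{thm:parity-sensitive-zero}(1) applies to the actual support of odd size \(2r+1\le p-1\) and gives \(|Z(F)|\le r(r+1)\).

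Both inequalities hold at once, so \(|Z(F)|\) is at most their minimum, which is \eqref{eq:combined-odd-support}.

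The only real obstacle is the degenerate case \(r=0\), that is \(|R|=1\). Theorem~\ref{thm:parabolic-zero-rigorous} states its numerical bound only for \(k\ge2\), and for \(k=1\) the expression \(p+2r-2\) equals \(p-2\). The same theorem shows that a one-point sum has no zeros. Hence \(|Z(F)|=0\), which is at most \(\min\{p-2,0\}=0\) because \(p\ge5\).

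One should also check that the hypotheses are read on the actual support, as the standing convention of Section~\ref{sec:parabolic-exact} requires. They are, since all \(c_x\ne0\). With these two points settled, the corollary follows directly.
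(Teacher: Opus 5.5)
Your proposal is correct and is exactly the paper's proof. The paper also obtains the corollary by combining the odd case of Theorem~\ref{thm:parabolic-zero-rigorous} with Theorem~\ref{thm:parity-sensitive-zero}(1) and taking the minimum. Your separate treatment of $r=0$ is harmless but unnecessary: Theorem~\ref{thm:parity-sensitive-zero}(1) already covers that case and gives $|Z(F)|\le 0$.
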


\begin{proof}
Combine Theorem~\ref{thm:parity-sensitive-zero} with the odd case of
Theorem~\ref{thm:parabolic-zero-rigorous}.
\end{proof}

\begin{corollary}[Maximal-jet branch]
\label{cor:maximal-jet-odd-bound}
Let
\[
 |R|=2r+1\le p-1,\qquad r\ge3,
\]
and suppose the exact zero-row system has rank \(2r\).  Scale the
coefficients into \(K=\Q(\omega)\), and let \(n\le r\) be the first
nonzero jet order.  If \(n=r\), then
\[
 |Z(F)|\le r(r+1).
\]
\end{corollary}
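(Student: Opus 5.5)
The plan is to derive the corollary from the full-degree theorem, using the maximal-order lemma to supply the missing hypothesis. With \(|R|=2r+1\) and \(n=r=\lfloor(2r+1)/2\rfloor\), the first nonzero jet sits at the maximal order allowed by Lemma~\ref{lem:first-nonzero-jet}. Lemma~\ref{lem:maximal-order-full-degree} applies because the actual support has size \(2\le k=2r+1\le p-1\) and \(n=\lfloor k/2\rfloor\). It gives \(\deg Q_0=n\).

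All remaining hypotheses of Theorem~\ref{thm:full-degree-odd-bound} are in force:
\begin{itemize}
\item \(|R|=2r+1\le p-1\) and \(r\ge3\);
\item the zero-row system has rank \(2r\);
\item the coefficients have been scaled into \(K\);
\item \(n\le r\) is the first nonzero jet;
\item \(\deg Q_0=n\), from the previous step.
\end{itemize}
Invoking that theorem then yields \(|Z(F)|\le r(r+1)\) directly.

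Alternatively, one could quote Proposition~\ref{prop:cyclotomic-nonlinear-estimates}(1) when \(\deg C\ge2\). The remaining cases (\(B\equiv0\) and \(\deg C=1\)) would be handled as in the full-rank odd branch of Theorem~\ref{thm:parity-sensitive-zero}, using the \(3n\) vertical estimate and the bound \(2r+3(r-1)\le r(r+1)\). This route is subsumed by the theorem above.

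The only point needing care is the applicability of Lemma~\ref{lem:maximal-order-full-degree}. Its proof needs primitivity of the cyclotomic normalization, and the moment window \(S_0=\dots=S_{2n}=0\) must contain \(k=2n+1\) consecutive moments. Both hold here, so I expect no real obstacle: the corollary is essentially the observation that \(n=r\) forces the full-degree branch.
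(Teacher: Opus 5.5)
Your proof is correct and matches the paper's route. The paper inlines the Vandermonde moment argument: if $\deg Q_0<r$, then $S_0=\cdots=S_{2r}=0$ on $2r+1$ distinct points, contradicting primitivity. You instead cite that argument as Lemma~\ref{lem:maximal-order-full-degree}; both then conclude $\deg Q_0=r$ and apply Theorem~\ref{thm:full-degree-odd-bound}.
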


\begin{proof}
The vanishing \(H_0,\ldots,H_{r-1}=0\) gives
\[
 S_0=\cdots=S_{2r-2}=0.
\]
If \(\deg Q_0<r\), then also
\[
 S_{2r-1}=S_{2r}=0.
\]
The \((2r+1)\times(2r+1)\) Vandermonde moment matrix on the distinct
support points would then force every primitive residue
\(c_{x,0}\) to vanish, a contradiction.  Hence \(\deg Q_0=r\), and
Theorem~\ref{thm:full-degree-odd-bound} applies.
\end{proof}

\begin{corollary}[Uniform odd-support rigidity]
\label{cor:odd-uniform-summary}
For every odd actual support
\[
 |R|=2r+1\le p-1,
\]
one has
\[
 |Z(F)|\le r(r+1).
\]
For \(r=1\) the constant \(2\) is attained by Theorem~\ref{thm:three-point-exact}, and for \(r=2\) the constant \(6\) is attained by Proposition~\ref{prop:five-point-sharpness}.
\end{corollary}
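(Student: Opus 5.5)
The statement is Corollary~\ref{cor:odd-uniform-summary}. It has two parts: the uniform bound, and the attainment of the constants for $r=1,2$. I would obtain the bound by quoting Theorem~\ref{thm:parity-sensitive-zero}(1). That theorem already asserts $|Z(F)|\le r(r+1)$ for every odd actual support $|R|=2r+1\le p-1$ and every prime $p\ge5$.

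Some care is needed with the phrase ``actual support.'' The corollary concerns sums $F$ with all coefficients $c_x\ne0$ on $R$. That is exactly the setting of Theorem~\ref{thm:parity-sensitive-zero}. So no reduction step is needed beyond citing it. The one small case is $r=0$, where the bound reads $|Z(F)|\le0$; this follows from part~(1) of Theorem~\ref{thm:parabolic-zero-rigorous}, since a one-point sum has no zeros.

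For sharpness I would take the two small cases separately.

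\textbf{Case $r=1$.} Theorem~\ref{thm:three-point-exact} gives the bound $2$ and also exhibits, for every $p$, a three-point sum attaining it. The example consists of the two zeros corresponding to $(z,w)=(1,1)$ and $(\omega,\omega^{-1})$. Since $r(r+1)=2$ here, the constant is attained.

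\textbf{Case $r=2$.} Proposition~\ref{prop:five-point-sharpness} produces, for $p=11$, a five-point sum with all five coefficients nonzero and exactly six zeros. Since $r(r+1)=6$, the constant is attained. I note the admissibility check: $|R|=5\le p-1=10$.

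I expect no real obstacle, because all the substantive work sits in Theorem~\ref{thm:parity-sensitive-zero} and its inductive case analysis. The only point to word carefully is the scope of the sharpness claim. For $r=2$, attainment is asserted only for some prime ($p=11$), not for every prime, and I would phrase the corollary's conclusion accordingly.
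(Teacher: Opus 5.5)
Your proposal is correct and follows the paper. The bound is literally Theorem~\ref{thm:parity-sensitive-zero}(1), whose induction already covers $r=0$, and the attainment claims come exactly from Theorem~\ref{thm:three-point-exact} and Proposition~\ref{prop:five-point-sharpness}; your remark that the $r=2$ equality is exhibited only at $p=11$ correctly limits the scope of the sharpness claim.
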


\section{Sharp uncertainty consequences}
\label{sec:consequences}

\begin{corollary}[Restricted Fourier uncertainty on the parabola]
\label{cor:restricted-parabola-rigorous}
Let the Fourier transform on \(\F_p^2\) be
\[
 \widehat f(\xi,\eta)
 =
 \sum_{a,b\in\F_p}
 f(a,b)\omega^{-(a\xi+b\eta)}.
\]
Suppose \(f\ne0\) and
\[
 \supp\widehat f
 \subseteq
 \Gamma:=\{(x^2,x):x\in\F_p\},
 \qquad
 |\supp\widehat f|=k\le p-1.
\]
If \(k=2r\), then
\[
 |\supp f|
 \ge
 p^2-p-2r+2.
\]
If \(k=2r+1\), then
\begin{equation}
 \boxed{
 |\supp f|
 \ge
 \max\{
 p^2-p-2r+2,\,
 p^2-r(r+1)
 \}.
 }
 \label{eq:restricted-parabola-odd-improved}
\end{equation}
In particular, for fixed odd \(k\), the complement of \(\supp f\) has
cardinality bounded independently of \(p\).
\end{corollary}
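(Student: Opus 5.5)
The plan is to identify $f$ with a parabolic sum via Fourier inversion and then read off the zero set. Write $\supp\widehat f=\{(x^2,x):x\in R\}$ with $|R|=k$. The map $x\mapsto(x^2,x)$ is injective, since the second coordinate recovers $x$, so distinct points of $\Gamma$ correspond to distinct $x$ and $|R|=k$. Put $c_x:=\widehat f(x^2,x)\ne0$. Fourier inversion on $\F_p^2$ gives
\[
 f(a,b)=p^{-2}\sum_{(\xi,\eta)}\widehat f(\xi,\eta)\omega^{a\xi+b\eta}
 =p^{-2}\sum_{x\in R}c_x\omega^{ax^2+bx}=p^{-2}F_{R,c}(a,b).
\]
Hence $f(a,b)=0$ exactly when $(a,b)\in Z(F_{R,c})$, and
\[
 |\supp f|=p^2-|Z(F_{R,c})|.
\]
The whole statement is therefore a translation of the zero bounds already proved.

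\textbf{Even case, $k=2r$.} Here $k\ge2$, so Theorem~\ref{thm:parabolic-zero-rigorous} gives $|Z|\le p+2r-2$. This yields $|\supp f|\ge p^2-p-2r+2$.

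\textbf{Odd case, $k=2r+1$.} Corollary~\ref{cor:combined-odd-support} gives $|Z|\le\min\{p+2r-2,\,r(r+1)\}$. Subtracting from $p^2$ turns the minimum into the maximum displayed in \eqref{eq:restricted-parabola-odd-improved}.

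One small point concerns $r=0$, i.e.\ $k=1$. Theorem~\ref{thm:parabolic-zero-rigorous}(1) gives no zeros, so $|\supp f|=p^2$. This is consistent with the stated bound, since $p^2-p+2\le p^2$ and $p^2-0=p^2$.

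\textbf{Final assertion.} For fixed odd $k$ the complement of $\supp f$ is exactly $Z(F_{R,c})$. Its size is at most $r(r+1)$ with $r=(k-1)/2$, and this bound does not depend on $p$.

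There is no real obstacle. The only care needed is in the normalization: the sign in the inverse transform must produce exactly $F_{R,c}(a,b)$ and not $F_{R,c}(-a,-b)$. Even if the sign were reversed, negating both coordinates is a bijection of $\F_p^2$, so the zero count is unchanged. One should also note that the coefficients $c_x$ are nonzero precisely because $\supp\widehat f$ is the given $k$-point subset of $\Gamma$, so the earlier theorems apply on the actual support $R$.
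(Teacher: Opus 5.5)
Your proposal is correct and follows essentially the same route as the paper: Fourier inversion identifies $f$ with $p^{-2}F_{R,c}$, so $|\supp f|=p^2-|Z(F_{R,c})|$. Theorem~\ref{thm:parabolic-zero-rigorous} then handles the even case and Corollary~\ref{cor:combined-odd-support} the odd case. Your extra checks on the $k=1$ case and the sign convention are harmless refinements of the same argument.
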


\begin{proof}
Write
\[
 \widehat f(x^2,x)=d_x
 \qquad(x\in R),
\]
with \(d_x\ne0\).  Fourier inversion gives
\[
 f(a,b)
 =
 \frac1{p^2}
 \sum_{x\in R}d_x\omega^{ax^2+bx}.
\]
Thus the zeros of \(f\) are exactly the zeros of the corresponding
parabolic sum.  Apply Theorem~\ref{thm:parabolic-zero-rigorous} in the
even case and Corollary~\ref{cor:combined-odd-support} in the odd case,
then subtract the zero count from \(p^2\).
\end{proof}

\begin{theorem}[Exact complete-MUB uncertainty]
\label{thm:exact-mub-rigorous}
For every prime \(p\ge5\) and every nonzero
\(\psi\in\C^p\),
\begin{equation}
 \boxed{
 \mathcal S_p(\psi)\ge p^2-p+2.
 }
 \label{eq:exact-mub-bound-rigorous}
\end{equation}
The bound is attained.  Hence the exact minimum total support is
\begin{equation}
 \boxed{
 T_s(p)=p^2-p+2.
 }
 \label{eq:Ts-rigorous}
\end{equation}
\end{theorem}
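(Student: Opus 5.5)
The plan is to reduce to the parabolic zero theorem after normalizing a minimum-support basis, and then exhibit an explicit two-point extremizer.

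\emph{Normalization.} Fix $0\ne\psi$ and choose a basis $\mathcal B_{j_0}$ realizing $k:=\min_j s_{\mathcal B_j}(\psi)$. By Lemma~\ref{lem:standard-mub-transitivity} there is a unitary $U$ permuting the standard complete MUB with $U\mathcal B_{j_0}=\mathcal B_\infty$. Hence we may assume $j_0=\infty$ without changing the multiset of support sizes. Write $\psi=\sum_{x\in R}c_xe_x$ with $|R|=k$ and all $c_x\ne0$.

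\emph{Range of $k$.} If $k=p$, then every basis has full support, so $\mathcal S_p(\psi)=p(p+1)\ge p^2-p+2$. So we may take $1\le k\le p-1$.

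\emph{Lower bound.} By \eqref{eq:mub-parabola-label-map}, the total number of vanishing coefficients in the $p$ finite bases equals $\#Z(F_{R,c})$. Therefore
\[
 \mathcal S_p(\psi)=k+p^2-\#Z(F_{R,c}).
\]
For $k=1$ there are no zeros, giving $1+p^2$. For $k\ge2$, Theorem~\ref{thm:parabolic-zero-rigorous} gives $\#Z\le p+2\lfloor k/2\rfloor-2$, so
\[
 \mathcal S_p(\psi)\ge p^2-p+2+\bigl(k-2\lfloor k/2\rfloor\bigr)\ge p^2-p+2.
\]
This is exactly the bound displayed in the introduction.

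\emph{Attainment.} Take $\psi=e_1-e_{-1}$, so $R=\{1,-1\}$. Then
\[
 F(a,b)=\omega^{a}\bigl(\omega^{b}-\omega^{-b}\bigr)
\]
vanishes exactly on the line $b=0$, which has $p$ points. (This line is nonvertical and the coefficients are antisymmetric about $0$, consistent with Lemma~\ref{lem:affine-line-zero-bound}.) Hence $\mathcal S_p(\psi)=2+p^2-p$.

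One must still check that $\mathcal B_\infty$ is indeed a minimum-support basis here; otherwise the count above is simply the value of $\mathcal S_p$, which is all we need. Each finite basis $\mathcal B_a$ has support $p-1$, since $\omega^{2b}\ne1$ for $b\ne0$ as $p$ is odd. Summing, $\mathcal S_p(\psi)=2+p(p-1)$.

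The main obstacle has already been handled by Theorem~\ref{thm:parabolic-zero-rigorous}. The only points needing care here are the bookkeeping identity between zeros of $F$ and vanishing MUB coefficients (the sign flip $(A,\beta)\mapsto(-A,-\beta)$ is a bijection), and the fact that the minimum-support reduction loses nothing, since support sizes are only permuted.
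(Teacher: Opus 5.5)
Your proof is correct and follows essentially the same route as the paper: you normalize a minimum-support basis to $\mathcal B_\infty$ via Lemma~\ref{lem:standard-mub-transitivity}, dispose of $k=p$ and $k=1$ directly, and apply Theorem~\ref{thm:parabolic-zero-rigorous} through $\mathcal S_p(\psi)=k+p^2-\#Z(F_{R,c})$. For attainment you exhibit a two-point state with exactly one zero in each finite basis; your $e_1-e_{-1}$ is the special case $r=1$, $s=-1$, $t=0$ of the paper's family $e_r-\omega^t e_s$, and, as you rightly note, minimality of $\mathcal B_\infty$ plays no role in evaluating $\mathcal S_p$ for the example.
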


\begin{proof}
Among the \(p+1\) standard MUBs, choose one in which \(\psi\) has
minimum support, and denote that support size by \(k\).
By Lemma~\ref{lem:standard-mub-transitivity}, after applying a unitary
which merely permutes the standard MUBs, we may assume that this basis is
the computational basis.  Thus
\[
 R:=\supp_{\mathcal B_\infty}(\psi)
\]
has size \(k\).

If \(k=p\), minimality implies that every one of the \(p+1\) support
sizes equals \(p\), so
\[
 \mathcal S_p(\psi)=p(p+1)>p^2-p+2.
\]

If \(k=1\), then \(\psi\) is a scalar multiple of one computational
basis vector.  Mutual unbiasedness implies that all \(p\) coordinates
in each of the other \(p\) bases are nonzero.  Hence
\[
 \mathcal S_p(\psi)=1+p^2>p^2-p+2.
\]

Assume now
\[
 2\le k\le p-1.
\]
Write
\[
 \psi=\sum_{x\in R}c_x e_x,
 \qquad c_x\ne0.
\]
For \(a,b\in\F_p\),
\[
 \langle\phi_{a,b},\psi\rangle
 =
 p^{-1/2}
 \sum_{x\in R}
 c_x\omega^{-a x^2-b x}.
\]
Replacing \((a,b)\) by \((-a,-b)\), which is a bijection of
\(\F_p^2\), shows that the total number \(N\) of zero outcomes among the
\(p\) finite quadratic bases is
\[
 N
 =
 \#\left\{
 (a,b)\in\F_p^2:
 \sum_{x\in R}c_x\omega^{a x^2+b x}=0
 \right\}.
\]
By Theorem~\ref{thm:parabolic-zero-rigorous},
\[
 N
 \le
 p+2\left\lfloor\frac{k}{2}\right\rfloor-2.
\]
The computational basis contributes support \(k\), while the \(p\)
finite bases contain \(p^2-N\) nonzero outcomes.  Therefore
\[
 \begin{aligned}
 \mathcal S_p(\psi)
 &=k+p^2-N\\
 &\ge
 k+p^2-
 \left(
 p+2\left\lfloor\frac{k}{2}\right\rfloor-2
 \right)\\
 &=
 p^2-p+
 k-2\left\lfloor\frac{k}{2}\right\rfloor+2\\
 &\ge p^2-p+2.
 \end{aligned}
\]
For odd \(k\), the displayed estimate is in fact
\(p^2-p+3\).

It remains to show that \eqref{eq:exact-mub-bound-rigorous} is attained.
Choose distinct \(r,s\in\F_p\) and \(t\in\F_p\), and put
\[
 \psi=e_r-\omega^t e_s.
\]
Its computational support is \(2\).  In the basis \(\mathcal B_a\),
\[
 \langle\phi_{a,b},\psi\rangle
 =
 p^{-1/2}
 \left(
 \omega^{-ar^2-br}
 -
 \omega^t\omega^{-as^2-bs}
 \right).
\]
This coefficient vanishes if and only if
\[
 -ar^2-br
 \equiv
 t-as^2-bs
 \pmod p,
\]
equivalently,
\begin{equation}
 b(r-s)
 =
 -a(r^2-s^2)-t.
 \label{eq:unique-zero-each-basis}
\end{equation}
Since \(r-s\ne0\), for every fixed \(a\in\F_p\) equation
\eqref{eq:unique-zero-each-basis} has exactly one solution
\(b\in\F_p\).  Thus each of the \(p\) finite quadratic bases has exactly
one zero coefficient and therefore support \(p-1\).  Hence
\[
 \mathcal S_p(\psi)
 =
 2+p(p-1)
 =
 p^2-p+2.
\]
This proves both sharpness and \eqref{eq:Ts-rigorous}.
\end{proof}

\begin{corollary}[Parity gap]
\label{cor:parity-gap-rigorous}
If a state has minimum support \(k\ge3\) among the standard MUBs and
\(k\) is odd, then
\[
 \mathcal S_p(\psi)\ge p^2-p+3.
\]
In particular, every state attaining
\(T_s(p)=p^2-p+2\) has even minimum support.
\end{corollary}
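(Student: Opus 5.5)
The corollary should follow directly from the displayed estimate in the proof of Theorem~\ref{thm:exact-mub-rigorous}, so I would reuse that argument and only keep track of parity. Fix a nonzero \(\psi\) whose minimum support among the \(p+1\) standard MUBs is \(k\), with \(k\ge3\) odd. By Lemma~\ref{lem:standard-mub-transitivity}, I may move a minimizing basis to \(\mathcal B_\infty\) without changing any support sizes. The computational support \(R\) then has size \(k\).

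If \(k=p\), then every basis has support \(p\), so \(\mathcal S_p(\psi)=p(p+1)\ge p^2-p+3\), since \(2p\ge3\). So assume \(3\le k\le p-1\) and write \(k=2r+1\). As in the proof of the theorem, the number \(N\) of vanishing coefficients among the \(p\) finite bases equals \(\#Z(F_{R,c})\) via the bijection \((a,b)\mapsto(-a,-b)\). Theorem~\ref{thm:parabolic-zero-rigorous} gives \(N\le p+2\lfloor k/2\rfloor-2=p+k-3\). Therefore
\[
 \mathcal S_p(\psi)=k+p^2-N\ge p^2-p+3 .
\]
One could use the stronger bound of Corollary~\ref{cor:combined-odd-support}, but it is not needed.

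For the second sentence, let \(\psi\) attain \(T_s(p)=p^2-p+2\) and let \(k\) be its minimum support. The cases \(k=1\) and \(k=p\) give \(\mathcal S_p(\psi)=p^2+1\) and \(\mathcal S_p(\psi)=p^2+p\), respectively, as computed in the theorem's proof. Both exceed \(p^2-p+2\), so neither can occur for an extremizer. The first part of this corollary excludes odd \(k\ge3\). Hence \(k\) is even. No step here is a real obstacle; the only care needed is to handle the boundary cases \(k=1\) and \(k=p\) separately, because Theorem~\ref{thm:parabolic-zero-rigorous} only applies when \(k\le p-1\).
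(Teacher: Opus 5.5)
Your proposal is correct and takes the same route as the paper. The paper obtains the corollary from the displayed estimate \(\mathcal S_p(\psi)\ge p^2-p+k-2\lfloor k/2\rfloor+2\) in the proof of Theorem~\ref{thm:exact-mub-rigorous}, and disposes of \(k=1\) and \(k=p\) by the direct computations there; you do the same, and your explicit check of the odd case \(k=p\) makes precise a point the paper leaves implicit.
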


\section{Rigidity of equality}
\label{sec:extremizer-structure}

To avoid a sign ambiguity in the equality theory, we distinguish the two
coordinate systems from this point onward.  Lowercase \((a,b)\) always denotes
the coordinates of the parabolic sum \(F(a,b)\).  When referring to the
physical MUB measurement labels, we write \((A,\beta)\), where
\(\phi_{A,\beta}\in\mathcal B_A\).  By
\eqref{eq:mub-parabola-label-map},
\begin{equation}
 A=-a,\qquad \beta=-b.
 \label{eq:F-to-physical-labels}
\end{equation}

The sharp constant in Theorem~\ref{thm:exact-mub-rigorous} forces equality
at every stage of the parabolic zero argument.  We record the resulting
rigidity before turning to the full classification.

\begin{theorem}[Rigidity of extremizers]
\label{thm:extremizer-structure}
Let \(p\ge5\) be prime, and suppose that
\[
 \mathcal S_p(\psi)=p^2-p+2.
\]
Choose a standard MUB in which \(\psi\) has minimum support and, using
Lemma~\ref{lem:standard-mub-transitivity}, normalize it to the
computational basis.  Put
\[
 k:=|\supp_{\mathcal B_\infty}(\psi)|.
\]
Then the following assertions hold.

\begin{enumerate}
\item The integer \(k\) is even.  Write \(k=2r\).

\item If \(k=2\), then every finite quadratic basis contains exactly one
zero coefficient.  Moreover, if the computational support is
\(\{u,v\}\), then, up to a nonzero scalar,
\[
 \psi=e_u-\omega^t e_v
\]
for some \(t\in\F_p\).

\item Suppose \(k\ge4\).  Then, after multiplying \(\psi\) by a nonzero
complex scalar, its nonzero computational coordinates belong to
\(K=\mathbb Q(\omega)\).  In the notation of
Lemma~\ref{lem:schrodinger-jet}, the first nonzero jet has order
\[
 n=r,
\]
and its leading pencil factors as
\begin{equation}
 Q_0(a,b)=B(a)\bigl(b+L(a)\bigr),
 \label{eq:extremizer-leading-factorization}
\end{equation}
where
\[
 \deg B=r-1,
 \qquad
 \deg L\le1,
\]
and \(B\) splits completely over \(\F_p\).

If the distinct roots of \(B\) are
\(\alpha_1,\ldots,\alpha_t\) with multiplicities
\(m_1,\ldots,m_t\), then
\begin{equation}
 m_1+\cdots+m_t=r-1.
 \label{eq:extremizer-partition}
\end{equation}
The \(F\)-fiber \(a=\alpha_i\) has exactly
\begin{equation}
 2m_i+1
 \label{eq:extremizer-exceptional-zeros}
\end{equation}
zeros.  Equivalently, putting \(A_i=-\alpha_i\), the physical finite basis
\(\mathcal B_{A_i}\) has exactly \(2m_i+1\) zero coefficients, while every
other physical finite basis has exactly one zero coefficient.

\item Since the computational basis has minimum support, each
\(m_i\) satisfies
\begin{equation}
 2m_i+1\le p-k.
 \label{eq:extremizer-minimum-support-constraint}
\end{equation}
Consequently, if \(k\ge4\), then necessarily \(k\le p-3\).
\end{enumerate}

Thus, up to permutation of the \(p+1\) basis labels, the zero-count
profile of an extremizer with minimum support \(k=2r\ge4\) has the form
\begin{equation}
 \boxed{
 \bigl(
 p-k,\,
 2m_1+1,\ldots,2m_t+1,\,
 \underbrace{1,\ldots,1}_{p-t\text{ entries}}
 \bigr),
 }
 \label{eq:extremizer-zero-profile}
\end{equation}
where \((m_1,\ldots,m_t)\) is a partition of \(r-1\) satisfying
\eqref{eq:extremizer-minimum-support-constraint}.
\end{theorem}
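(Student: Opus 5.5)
The plan is to run the proof of Theorem~\ref{thm:exact-mub-rigorous} backwards and force equality at every inequality used there. Put \(N=|Z(F_{R,c})|\). That proof gives \(\mathcal S_p(\psi)=k+p^2-N\), \(N\le p+2\lfloor k/2\rfloor-2\), and the cases \(k\in\{1,p\}\) give strictly larger totals. Hence equality forces \(2\le k\le p-1\), \(k-2\lfloor k/2\rfloor=0\) and \(N=p+k-2\), so \(k=2r\) is even and assertion (1) follows; this is also Corollary~\ref{cor:parity-gap-rigorous}. For \(k=2\), every two-point sum has zero set either empty or an affine line \(b(u-v)=\dots\), with exactly one point on each vertical fiber. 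Then \(N=p\) forces the line case, which means \(c_u/c_v\) is \(-\omega^{t}\) for some \(t\). This gives (2).

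For \(k=2r\ge4\) I will rerun the proof of Theorem~\ref{thm:parabolic-zero-rigorous} with \(N=p+2r-2\). If \(r_Z\le k-2\), Lemma~\ref{lem:complex-to-cyclotomic} gives a reduced support \(k'<k\). Then \(N\le p+2\lfloor k'/2\rfloor-2\le p+2r-4\), a contradiction, so \(r_Z=k-1\) and \(c\) is a scalar multiple of a vector in \(K^R\), which proves the first part of (3). In Lemma~\ref{lem:cyclotomic-zero-rigorous}, Case 2 gives at most \(\max\{3n,p-2\}<p+2r-2\), so \(B\not\equiv0\). Every inequality in the Case~1 chain must then be an equality:
\begin{itemize}
\item \(n=r\) and \(\deg B=n-1=r-1\);
\item \(\deg D=\deg B\), so \(B\mid A\) and \(Q_0=B(a)(b+L(a))\) with \(\deg L\le 1\), since \(\deg A\le n\);
\item \(\sum_{\alpha}m_\alpha=\deg D\) over \(\F_p\)-roots, so \(B\) splits completely over \(\F_p\);
\item \(|Z_{\F_p}(D)|=|Z_{\F_p}(B)|\), which is automatic;
\item every fiber bound is attained, giving exactly one zero on each non-exceptional fiber and exactly \(2m_\alpha+1\) zeros on the fiber over a root of \(B\).
\end{itemize}
Here \(m_\alpha=\min(\operatorname{ord}A,\operatorname{ord}B)=\operatorname{ord}_\alpha B\), because \(A=BL\) and \(b+L\) is nonvanishing at \(\alpha\) as a polynomial in \(b\). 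Translating to physical labels via \eqref{eq:F-to-physical-labels} gives (3).

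For (4), minimality of the computational support says each finite basis has support at least \(k\), that is, at most \(p-k\) zeros. Applying this to \(\mathcal B_{A_i}\) gives \(2m_i+1\le p-k\). Since \(k\ge4\) means \(r\ge2\), the polynomial \(B\) has at least one root, so \(3\le p-k\) and \(k\le p-3\). The zero profile \eqref{eq:extremizer-zero-profile} then follows directly: the computational basis has \(p-k\) zeros, the exceptional bases have \(2m_i+1\), and the remaining \(p-t\) bases have exactly one each.

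The main obstacle is the bookkeeping in the equality chain of Case~1. One must check that each step, \(|Z(D)|-|Z(B)|\le0\), \(2\sum m_\alpha\le 2\deg D\le2\deg B\le 2n-2\le 2r-2\), and the fiber bounds, is individually tight. In particular the conclusion that \(D=B\) up to scalar, which yields the factorization \(Q_0=B(b+L)\), rests on \(\deg D=\deg B\) together with \(D\mid B\).
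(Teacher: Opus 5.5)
Your proposal is correct and follows the paper's own route. You force equality in $\mathcal S_p(\psi)=k+p^2-N$, exclude a deficient zero-row rank by the induction bound $p+2r-4$, rule out Case~2, and read off $n=r$, $D\sim B$, complete splitting and attained fiber counts from the tight Case~1 chain of Lemma~\ref{lem:cyclotomic-zero-rigorous}. The only differences are cosmetic: for $k=2$ you identify the zero set directly as a line rather than counting fibers and evaluating at $a=0$, and you explicitly dispose of $k\in\{1,p\}$ and of $m_\alpha=\operatorname{ord}_\alpha B$, which the paper leaves implicit.
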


\begin{proof}
Choose the minimum-support basis and normalize it to
\(\mathcal B_\infty\), exactly as in the proof of
Theorem~\ref{thm:exact-mub-rigorous}.  Let
\[
 R=\supp_{\mathcal B_\infty}(\psi),
 \qquad |R|=k,
\]
and let \(N\) denote the total number of zero coefficients among the
\(p\) finite quadratic bases.  Then
\begin{equation}
 \mathcal S_p(\psi)=k+p^2-N.
 \label{eq:extremizer-S-vs-N}
\end{equation}
Since \(\mathcal S_p(\psi)=p^2-p+2\),
\begin{equation}
 N=p+k-2.
 \label{eq:extremizer-N}
\end{equation}

Theorem~\ref{thm:parabolic-zero-rigorous} gives
\[
 N\le p+2\left\lfloor\frac{k}{2}\right\rfloor-2.
\]
Comparing with \eqref{eq:extremizer-N} yields
\[
 k\le2\left\lfloor\frac{k}{2}\right\rfloor.
\]
The reverse inequality is automatic, hence equality holds and \(k\) is
even.  Write \(k=2r\).  This proves (1).

Assume first \(k=2\), and write
\[
 R=\{u,v\},
 \qquad
 \psi=c_u e_u+c_v e_v,
 \qquad c_uc_v\ne0.
\]
Equation \eqref{eq:extremizer-N} gives \(N=p\).  On the other hand,
for every fixed \(a\in\F_p\), the function
\[
 b\longmapsto
 c_u\omega^{a u^2+bu}+c_v\omega^{a v^2+bv}
\]
has at most one zero, because after division by the nonzero first term
a zero is equivalent to one equation in the nontrivial character
\(b\mapsto\omega^{b(v-u)}\).  Since there are \(p\) finite bases and
there are altogether \(p\) zeros, each finite basis has exactly one.
In particular, for \(a=0\) there is \(b_0\in\F_p\) such that
\[
 c_u\omega^{b_0u}+c_v\omega^{b_0v}=0.
\]
Therefore
\[
 \frac{c_v}{c_u}=-\omega^{b_0(u-v)}.
\]
After multiplication by \(c_u^{-1}\), this is the asserted form
\(e_u-\omega^t e_v\).  This proves (2).

Now suppose \(k=2r\ge4\).  We revisit the proof of
Theorem~\ref{thm:parabolic-zero-rigorous} and show that every inequality
used there must be an equality.

Let \(Z\) be the zero set of the corresponding parabolic sum, so
\(|Z|=N=p+2r-2\).  If the zero-row rank were at most \(k-2\), the proof
of Lemma~\ref{lem:complex-to-cyclotomic} would produce a nonzero
parabolic sum \(F'\) on an actual support \(R'\) with
\(|R'|\le k-1\), while preserving every zero in \(Z\).  If
\(|R'|=1\), then \(F'\) has no zeros, impossible.  If \(|R'|\ge2\),
Theorem~\ref{thm:parabolic-zero-rigorous} would give
\[
 |Z|
 \le
 p+2\left\lfloor\frac{|R'|}{2}\right\rfloor-2
 \le
 p+2(r-1)-2
 =p+2r-4,
\]
contrary to \(|Z|=p+2r-2\).  Hence the zero-row rank is exactly
\(k-1\).  Lemma~\ref{lem:complex-to-cyclotomic} therefore implies that,
after a nonzero scalar multiplication, the coefficient vector belongs
to \(K^R\).

We may consequently use the cyclotomic proof of
Lemma~\ref{lem:cyclotomic-zero-rigorous}.  Let \(n\) be the first
nonzero jet order.  That proof gives
\begin{equation}
 N\le p+2n-2
 \le p+2r-2.
 \label{eq:extremizer-chain-n}
\end{equation}
Since the two endpoints are equal to \(N\), both inequalities in
\eqref{eq:extremizer-chain-n} are equalities.  Thus
\begin{equation}
 n=r.
 \label{eq:extremizer-n-r}
\end{equation}

Write
\[
 Q_0(a,b)=A(a)+bB(a),
 \qquad
 \deg A\le r,
 \quad
 \deg B\le r-1.
\]
We first show that \(B\not\equiv0\).  If \(B\equiv0\), Case 2 in the
proof of Lemma~\ref{lem:cyclotomic-zero-rigorous} gives, away from the
single separately treated boundary configuration, the stronger bound
\[
 N\le3n=3r.
\]
But
\[
 3r<p+2r-2,
\]
because \(r\le(p-1)/2\) and \(p\ge5\).  In the boundary configuration
that proof gives the still stronger estimate \(N\le p-2\).  Both
contradict \(N=p+2r-2\).  Hence \(B\not\equiv0\).

Let \(D=\gcd(A,B)\), chosen monic.  Case 1 of the same proof gives the
chain
\begin{equation}
 \begin{aligned}
 N
 &\le
 p+2\sum_{\alpha\in Z_{\F_p}(D)}m_\alpha\\
 &\le p+2\deg D\\
 &\le p+2\deg B\\
 &\le p+2r-2,
 \end{aligned}
 \label{eq:extremizer-equality-chain}
\end{equation}
where
\[
 m_\alpha=
 \min\{\operatorname{ord}_\alpha A,
        \operatorname{ord}_\alpha B\}.
\]
Again the first and last quantities in
\eqref{eq:extremizer-equality-chain} are both equal to \(N\), so every
inequality in that chain is an equality.  In particular,
\begin{equation}
 \deg B=r-1,
 \qquad
 \deg D=\deg B.
 \label{eq:extremizer-degrees}
\end{equation}
Since \(D\mid B\) and the two polynomials have the same degree,
\(D\) differs from \(B\) only by a nonzero scalar.  Thus \(B\mid A\).
Because \(\deg A\le r\) and \(\deg B=r-1\), there is a polynomial
\(L\) of degree at most one such that
\[
 A=BL.
\]
This proves the factorization
\eqref{eq:extremizer-leading-factorization}.

Equality in
\[
 \sum_{\alpha\in Z_{\F_p}(D)}m_\alpha\le\deg D
\]
forces every root of \(D\), with its full multiplicity, to lie in
\(\F_p\).  Since \(D\) is a scalar multiple of \(B\), the polynomial
\(B\) splits completely over \(\F_p\).  If its distinct roots are
\(\alpha_1,\ldots,\alpha_t\) with multiplicities
\(m_1,\ldots,m_t\), then
\[
 m_1+\cdots+m_t=\deg B=r-1,
\]
which is \eqref{eq:extremizer-partition}.

It remains to prove the exact fiber counts.  In Case 1 of the zero-bound
proof, a fiber with \(B(\alpha)\ne0\) contributes at most one exact
zero, while a root \(\alpha_i\) of multiplicity \(m_i\) contributes at
most \(2m_i+1\) exact zeros.  Summing these individual upper bounds now
gives exactly
\[
 p-t+\sum_{i=1}^t(2m_i+1)
 =p+2\sum_{i=1}^t m_i
 =p+2r-2
 =N.
\]
Since the sum of the fiberwise upper bounds equals the actual total,
every fiberwise upper bound must itself be attained.  Hence each
exceptional fiber \(a=\alpha_i\) contains exactly \(2m_i+1\) zeros and
every nonexceptional finite fiber contains exactly one.  This proves
(3).

Finally, the computational basis was chosen to have minimum support
\(k\).  Therefore every finite basis has support at least \(k\), or,
equivalently, has at most \(p-k\) zero coefficients.  Applying this to the physical exceptional basis
\(\mathcal B_{A_i}\), where \(A_i=-\alpha_i\), gives
\[
 2m_i+1\le p-k,
\]
which is \eqref{eq:extremizer-minimum-support-constraint}.  If
\(k\ge4\), then \(r-1\ge1\), so at least one exceptional root exists
and its multiplicity is at least one.  Hence \(p-k\ge3\), i.e.
\(k\le p-3\).  This proves (4) and completes the proof.
\end{proof}

\section{Classification and enumeration of extremizers}
\label{sec:complete-extremizers}

We now turn the equality constraints into a complete parametrization of the
extremizing rays in every prime dimension \(p\ge5\).

Put
\[
 \mathcal P:=\F_p^\times/\{\pm1\},
 \qquad
 \kappa:=|\mathcal P|=\frac{p-1}{2}.
\]
We write \([t]\) for the class of \(t\in\F_p^\times\).

\begin{lemma}[Universal constraint matrix]
\label{lem:complete-constraint-matrix}
Let \(r\ge2\).  Choose
\[
 \mathcal R=\{[\rho_1],\ldots,[\rho_r]\}
 \in\binom{\mathcal P}{r}
\]
and
\[
 \mathcal C\in
 \binom{\F_p\times\mathcal P}{r-1}.
\]
Choose representatives \(\rho_j\in\F_p^\times\), and for each
\((\alpha,[e])\in\mathcal C\) choose \(e\in\F_p^\times\).  Define
\begin{equation}
 M_{\mathcal C,\mathcal R}
 =
 \left(
 \omega^{\alpha\rho_j^2}
 \bigl(\omega^{e\rho_j}-\omega^{-e\rho_j}\bigr)
 \right)_{
 (\alpha,[e])\in\mathcal C,\ 1\le j\le r}.
 \label{eq:complete-constraint-matrix}
\end{equation}
Then:

\begin{enumerate}
\item
\(\operatorname{rank}M_{\mathcal C,\mathcal R}=r-1\);
\item every maximal minor is nonzero;
\item \(\ker M_{\mathcal C,\mathcal R}\) is generated by a vector
\[
 q=(q_1,\ldots,q_r)
\]
with \(q_j\ne0\) for every \(j\).
\end{enumerate}

Changing a representative \(e\) to \(-e\) multiplies the corresponding
row by \(-1\) and leaves the kernel unchanged.  Changing one radius
representative \(\rho_j\) to \(-\rho_j\) multiplies the corresponding column
by \(-1\).  More generally, if \(S\) is the resulting diagonal sign matrix,
then
\[
 M'_{\mathcal C,\mathcal R}=M_{\mathcal C,\mathcal R}S,
 \qquad
 \ker M'_{\mathcal C,\mathcal R}=S^{-1}\ker M_{\mathcal C,\mathcal R}.
\]
Thus the kernel ray is covariant, rather than literally invariant, under
changes of the radius representatives.  In the state construction of
Theorem~\ref{thm:complete-extremizer-classification}, this column sign is
compensated by exchanging the two support labels \(c\pm\rho_j\); hence the
resulting projective state is independent of all representative choices.
\end{lemma}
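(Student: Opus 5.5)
The plan is to reduce the minor condition to the sharp even-support bound of Theorem~\ref{thm:parabolic-zero-rigorous}, using antisymmetric states centred at \(0\). Since \(M_{\mathcal C,\mathcal R}\) is \((r-1)\times r\), assertions (1)--(3) all follow once we show one thing: every set of \(r-1\) columns is linearly independent over \(\C\). Indeed, independence of any \(r-1\) columns means every maximal minor is nonzero, which is (2). Then the rank is \(r-1\), which is (1). The kernel is therefore one-dimensional. It is generated by the vector of signed maximal minors \(q_j=(-1)^{j-1}\det M^{(j)}\), as in the proof of Lemma~\ref{lem:complex-to-cyclotomic}, and all its entries are nonzero by (2), which is (3). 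The statements about changing representatives are immediate: \(e\mapsto-e\) negates a row, and \(\rho_j\mapsto-\rho_j\) negates a column because \(\rho_j^2\) is unchanged. So \(M'=MS\) and \(\ker M'=S^{-1}\ker M\).

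To prove the column-independence claim, suppose some nonzero \(q\in\C^r\) lies in \(\ker M_{\mathcal C,\mathcal R}\). Let \(J=\{j:q_j\ne0\}\) and \(s=|J|\). We must show \(s=r\), so assume \(1\le s\le r-1\) and derive a contradiction. Form the parabolic sum
\[
 F(a,b)=\sum_{j\in J}q_j\bigl(\omega^{a\rho_j^2+b\rho_j}-\omega^{a\rho_j^2-b\rho_j}\bigr)
 =\sum_{j\in J}q_j\,\omega^{a\rho_j^2}\bigl(\omega^{b\rho_j}-\omega^{-b\rho_j}\bigr).
\]
Its actual support is \(R=\{\pm\rho_j:j\in J\}\). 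This set has exactly \(2s\) points, because the classes \([\rho_j]\) are distinct and each \(\rho_j\ne0\). Moreover \(2s\le 2\kappa=p-1\), so Theorem~\ref{thm:parabolic-zero-rigorous} applies and gives \(|Z(F)|\le p+2s-2\le p+2r-4\).

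Now count zeros from below. First, \(F(a,0)=0\) for every \(a\), which gives \(p\) zeros on the line \(b=0\). Second, \(F(a,-b)=-F(a,b)\). Each row \((\alpha,[e])\in\mathcal C\) of the kernel equation says exactly \(F(\alpha,e)=0\), and antisymmetry then gives \(F(\alpha,-e)=0\) as well. This produces \(2(r-1)\) further zeros, provided the points are distinct. The step needing care is precisely this distinctness check. We have \(e\ne0\), so \((\alpha,\pm e)\) lie off the line \(b=0\) and \((\alpha,e)\ne(\alpha,-e)\). Distinct elements of \(\mathcal C\) have either different \(\alpha\) or different classes \([e]\), so their pairs \(\{(\alpha,\pm e)\}\) are disjoint. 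Hence \(|Z(F)|\ge p+2(r-1)=p+2r-2>p+2r-4\), which contradicts the bound above.

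The case \(s=1\) is covered by the same count, since Theorem~\ref{thm:parabolic-zero-rigorous} gives at most \(p\) zeros there, while we found at least \(p+2r-2>p\) because \(r\ge2\). Therefore \(s=r\), and every \(r-1\) columns are independent. The only genuine input is the sharp even bound. Everything else is bookkeeping of distinct zero points.
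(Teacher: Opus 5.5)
Your proof is correct and follows the paper's argument: a kernel vector with \(s\le r-1\) nonzero entries gives a \(b\)-odd parabolic sum of actual support \(2s\) with at least \(p+2r-2\) zeros (the line \(b=0\) plus the disjoint symmetric pairs \((\alpha,\pm e)\)), which contradicts the bound \(p+2s-2\le p+2r-4\) of Theorem~\ref{thm:parabolic-zero-rigorous}. The cofactor description of the kernel and the sign-change bookkeeping also match the paper; you just spell out the distinctness and support-size checks in more detail.
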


\begin{proof}
Suppose a maximal minor vanishes.  Then there is a nonzero vector
\[
 u=(u_1,\ldots,u_r)
\]
supported on at most \(r-1\) columns such that
\[
 M_{\mathcal C,\mathcal R}u=0.
\]
Let \(s\le r-1\) be the number of nonzero coordinates of \(u\), and
form the \(b\)-odd parabolic sum
\[
 G(a,b)
 =
 \sum_{j:u_j\ne0}
 u_j\omega^{a\rho_j^2}
 \bigl(\omega^{b\rho_j}-\omega^{-b\rho_j}\bigr).
\]
Its actual frequency support on the parabola has size \(2s\).

For every \(a\in\F_p\),
\[
 G(a,0)=0.
\]
Thus \(G\) has \(p\) central zeros.  For every
\((\alpha,[e])\in\mathcal C\), the row relation gives
\[
 G(\alpha,e)=0,
\]
and oddness in \(b\) also gives
\[
 G(\alpha,-e)=0.
\]
Because the \(r-1\) elements of \(\mathcal C\) are distinct as pairs
\((\alpha,[e])\), these are \(2(r-1)\) distinct noncentral zeros.
Hence
\[
 \#Z(G)\ge p+2r-2.
\]

Theorem~\ref{thm:parabolic-zero-rigorous}, applied to the actual support
size \(2s\), gives
\[
 \#Z(G)\le p+2s-2\le p+2r-4,
\]
a contradiction.  Hence every maximal minor is nonzero.  The remaining
assertions follow from the cofactor formula for the one-dimensional
kernel.
\end{proof}

\begin{theorem}[Classification of all extremizers]
\label{thm:complete-extremizer-classification}
Let \(p\ge5\) be prime and let
\[
 \mathcal S_p(\psi)=p^2-p+2.
\]
Choose one standard MUB in which \(\psi\) has minimum support, fix a
standard-MUB-permuting unitary that sends it to the computational basis,
and work with the resulting normalized projective state.

Then the minimum support is
\begin{equation}
 k=2r
 \qquad\text{for some}\qquad
 1\le r\le\frac{p-3}{2}.
 \label{eq:allowed-minimum-supports}
\end{equation}
There are unique data
\begin{equation}
 c,\nu\in\F_p,
 \qquad
 \mathcal R\in\binom{\mathcal P}{r},
 \qquad
 \mathcal C\in
 \binom{\F_p\times\mathcal P}{r-1},
 \label{eq:complete-extremizer-data}
\end{equation}
such that, with
\[
 m_\alpha
 :=
 \#\{[e]\in\mathcal P:(\alpha,[e])\in\mathcal C\},
\]
one has the capacity inequalities
\begin{equation}
 m_\alpha
 \le
 M_{p,r}:=
 \frac{p-2r-1}{2}
 \qquad(\alpha\in\F_p).
 \label{eq:complete-capacity}
\end{equation}

For \(r=1\), the set \(\mathcal C\) is empty and we use the convention
\(q_1=1\).  For \(r\ge2\), write
\[
 \mathcal R=\{[\rho_1],\ldots,[\rho_r]\}
\]
and choose representatives \(\rho_j\) and representatives \(e\) for the
classes occurring in \(\mathcal C\), as in
Lemma~\ref{lem:complete-constraint-matrix}.  Let
\[
 q=(q_1,\ldots,q_r)
\]
generate the corresponding kernel of
\(M_{\mathcal C,\mathcal R}\).  After multiplication of
\(\psi\) by one nonzero scalar,
\begin{equation}
 \boxed{
 \begin{aligned}
 \psi_{c+\rho_j}
 &=
 q_j\,\omega^{-\nu(c+\rho_j)},\\
 \psi_{c-\rho_j}
 &=
 -q_j\,\omega^{-\nu(c-\rho_j)}
 \qquad(1\le j\le r),
 \end{aligned}}
 \label{eq:complete-state-formula}
\end{equation}
and all remaining computational coordinates vanish.  This projective state
is independent of the representative choices: changing an \(e\)-representative
only changes a row sign, while replacing \(\rho_j\) by \(-\rho_j\) changes
\(q_j\) to \(-q_j\) and simultaneously exchanges the two support labels
\(c+\rho_j\) and \(c-\rho_j\), leaving the coordinates in
\eqref{eq:complete-state-formula} unchanged after that identification.

For the parabolic sum
\[
 F(a,b)=\sum_x\psi_x\omega^{a x^2+b x},
\]
the complete zero set in \(F\)-coordinates is
\begin{equation}
 \boxed{
 \begin{aligned}
 b&=\nu-2ac
 &&(a\in\F_p),\\
 b&=\nu-2\alpha c\pm e
 &&((\alpha,[e])\in\mathcal C).
 \end{aligned}}
 \label{eq:complete-zero-set}
\end{equation}
Equivalently, in the physical MUB labels \((A,\beta)\), the zero
coefficients occur exactly at
\begin{equation}
 \boxed{
 \begin{aligned}
 \beta&=-\nu-2Ac
 &&(A\in\F_p),\\
 A&=-\alpha,\qquad
 \beta=-\nu+2\alpha c\pm e
 &&((\alpha,[e])\in\mathcal C).
 \end{aligned}}
 \label{eq:complete-physical-zero-set}
\end{equation}
Thus the \(F\)-fiber \(a=\alpha\) has exactly
\[
 1+2m_\alpha
\]
zeros; equivalently, the physical basis \(\mathcal B_A\) contains exactly
\(1+2m_{-A}\) zero coefficients.

Conversely, every choice of data
\eqref{eq:complete-extremizer-data} satisfying
\eqref{eq:complete-capacity} gives, through
\eqref{eq:complete-state-formula}, a sharp state whose minimum support
is \(2r\).  Once the MUB-permuting normalization above has been fixed,
the data are unique for the resulting normalized projective state.
\end{theorem}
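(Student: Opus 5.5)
The plan is to start from Theorem~\ref{thm:extremizer-structure} and show that its jet-level information pins down the state itself. By that theorem, $k=2r$ is even, $k\le p-3$ when $r\ge2$, and for $k=2$ the state is $e_u-\omega^te_v$; the case $r=1$ is then read off directly. Set $c=(u+v)/2$ and $\rho_1=(u-v)/2$, and take $\nu$ from the linear phase. The zero set is the single line $b=\nu-2ac$, and $\mathcal C=\emptyset$. For $r\ge2$ the key geometric step is to extract an exact zero line. The extremizer has $N=p+2r-2$ zeros, exactly one on each nonexceptional fiber. These single zeros lie on the curve $b+L(a)=0$ with $\deg L\le1$, so at least $p-(r-1)>2r-1$ of them are on a nonvertical affine line. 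Lemma~\ref{lem:affine-line-zero-bound} then forces $F|_\ell\equiv0$, and its pairing argument shows that $R$ is a union of reflection pairs about some centre $c$ with antisymmetric coefficients after the rephasing $\omega^{\nu x}$. To make this precise, write $\ell$ as $b=\nu-2ac$ and apply Lemma~\ref{lem:parabolic-affine-symmetry} to translate by $c$ and rephase by $\nu$, which normalizes $\ell$ to $b=0$. The involution in the pairing argument is then $x\mapsto -x$, so $R=\{c\pm\rho_j\}$ and $\psi_{c+\rho_j}=-\psi_{c-\rho_j}$ up to the phase $\omega^{-\nu x}$. This gives the shape \eqref{eq:complete-state-formula} with unknown $q_j\neq0$.

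Next I would identify the off-line zeros. After normalization, $F$ becomes the $b$-odd sum $G(a,b)=\sum_j q_j\omega^{a\rho_j^2}(\omega^{b\rho_j}-\omega^{-b\rho_j})$. Its zeros off $b=0$ therefore come in pairs $(\alpha,\pm e)$, and there are exactly $2(r-1)$ of them, because the central line already accounts for $p$ zeros. These pairs define $\mathcal C\in\binom{\F_p\times\mathcal P}{r-1}$. The fiber counts $2m_\alpha+1$ from Theorem~\ref{thm:extremizer-structure} match the multiplicity $m_\alpha$ of $\alpha$ in $\mathcal C$. The capacity bound \eqref{eq:complete-capacity} is \eqref{eq:extremizer-minimum-support-constraint} rewritten as $2m_\alpha+1\le p-2r$. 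The vanishing $G(\alpha,e)=0$ says exactly that $q\in\ker M_{\mathcal C,\mathcal R}$, and Lemma~\ref{lem:complete-constraint-matrix} makes that kernel one-dimensional with all entries nonzero. Hence $q$ is determined up to scalar, and undoing the normalization gives \eqref{eq:complete-zero-set} and its physical version \eqref{eq:complete-physical-zero-set} via \eqref{eq:F-to-physical-labels}.

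The converse is the step I expect to be the main obstacle. Given admissible data, define $q$ from the kernel; Lemma~\ref{lem:complete-constraint-matrix} makes all $q_j$ nonzero, so the support is exactly $2r$. The central line gives $p$ zeros and the pairs $(\alpha,\pm e)$ give $2(r-1)$ more, so $N\ge p+2r-2$, and Theorem~\ref{thm:parabolic-zero-rigorous} supplies the reverse inequality. This shows the zero set is exactly the listed one, with fiber counts $1+2m_\alpha$. The hard part is checking that the computational basis really realizes the minimum support, so that $\mathcal S_p=2r+p^2-N=p^2-p+2$. The capacity inequality handles the finite bases: each has at most $p-2r$ zeros, so support at least $2r$. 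Even if some other basis tied at $2r$, $\mathcal S_p$ would still be computed correctly, since the count uses all bases.

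For uniqueness, the line $\ell$ is unique because two exact zero lines would give more than $p+2r-2$ zeros once $r\ge2$; equivalently, the nonexceptional fibers carry only one zero each. This determines $c$ and $\nu$, because the slope of $\ell$ is $-2c$. Then $\mathcal R$ is the set of classes $[x-c]$, and $\mathcal C$ is the off-line zero set. Independence from the choice of representatives is the sign-covariance statement of Lemma~\ref{lem:complete-constraint-matrix}, combined with swapping the labels $c\pm\rho_j$.
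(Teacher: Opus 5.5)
Your route is the paper's. Theorem~\ref{thm:extremizer-structure} puts exactly one zero on each nonexceptional fiber, all on the nonvertical line $b=-L(a)$. Exact vanishing on that line yields the reflection pairs and the antisymmetry. The off-line zeros become the rows of $M_{\mathcal C,\mathcal R}$. The converse is the count $p+2(r-1)$ against Theorem~\ref{thm:parabolic-zero-rigorous}, together with the capacity inequality.

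One step is wrong as written, though. Your inequality $p-(r-1)>2r-1$ is equivalent to $3r<p+2$, and this fails in the allowed range. Take $p=17$, $r=7$, so $2r=p-3$. The capacity constraint of Theorem~\ref{thm:extremizer-structure} gives $m_i\le1$, hence $t=r-1=6$. The nonexceptional fibers then guarantee only $11$ zeros on $\ell$, fewer than $2r-1=13$. The fix is to use the other branch of \eqref{eq:nonvertical-line-sharp}. Since $r\le(p-1)/2$, you have $p-r+1\ge(p+3)/2>(p-1)/2$, so Lemma~\ref{lem:affine-line-zero-bound} still forces $F|_\ell\equiv0$. This is exactly the paper's argument, done directly: it combines $|\supp\widehat g|\le r-1$ and $|\supp g|\le(p+1)/2$ with Tao's inequality to get $g\equiv0$.

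For uniqueness you argue differently from the paper. You observe that two distinct exact zero lines would give at least $2p-1>p+2r-2$ zeros, so the central line $b=\nu-2ac$, and hence $(c,\nu)$, is determined by the state. The paper instead recovers $c$ as the centroid $\frac1{2r}\sum_{x\in R}x$, and $\nu$ from the uniqueness of the reflection centre of each fiber's zero set. Both arguments are valid; yours is slightly shorter.
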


\begin{proof}
We first treat minimum support two.  Suppose
\[
 R=\{u,v\},
 \qquad u\ne v.
\]
The computational support has size two.  By the pairwise support
inequality, every one of the other \(p\) MUBs has support at least \(p-1\).
Since the total support equals
\[
 p^2-p+2=2+p(p-1),
\]
the sum of those \(p\) remaining support sizes is exactly \(p(p-1)\).
Consequently every one of them has support exactly \(p-1\).  In particular
the Fourier basis has a zero.  Therefore, for some \(b_0\in\F_p\),
\[
 \psi_u\omega^{b_0u}
 +
 \psi_v\omega^{b_0v}
 =0.
\]
Thus
\[
 \frac{\psi_v}{\psi_u}
 =
 -\omega^{b_0(u-v)}.
\]
Putting
\[
 c=\frac{u+v}{2},
 \qquad
 \rho=\frac{u-v}{2},
\]
and absorbing one nonzero scalar, this is exactly
\eqref{eq:complete-state-formula} with \(r=1\), empty
\(\mathcal C\), and a suitable \(\nu\).  Direct substitution gives the
zero line
\[
 b=\nu-2ac.
\]
This is the asserted classification for \(r=1\).

Assume from now on that the minimum support is at least four.  By the
parity statement in Theorem~\ref{thm:extremizer-structure}, it equals
\(2r\) for some \(r\ge2\).  The same theorem gives
\begin{equation}
 Q_0(a,b)=B(a)\bigl(b+L(a)\bigr),
 \qquad
 \deg B=r-1,
 \qquad
 \deg L\le1,
 \label{eq:complete-leading-pencil}
\end{equation}
where \(B\) splits completely over \(\F_p\).  If its distinct roots are
\[
 \alpha_1,\ldots,\alpha_t
\]
with multiplicities
\[
 m_1,\ldots,m_t,
\]
then
\begin{equation}
 m_1+\cdots+m_t=r-1,
 \qquad
 t\le r-1,
 \label{eq:complete-partition}
\end{equation}
the fiber \(a=\alpha_i\) contains exactly \(2m_i+1\) exact zeros, and
every nonexceptional finite fiber contains exactly one exact zero.

Write
\[
 -L(a)=\mu a+\nu.
\]
Every nonexceptional fiber therefore has its unique zero at
\[
 b=\mu a+\nu.
\]
Put
\[
 d_x:=\psi_x\omega^{\nu x},
 \qquad
 h(x):=x^2+\mu x,
\]
and
\[
 g(t):=
 \sum_{\substack{x\in R\\h(x)=t}}d_x.
\]
For every nonexceptional \(a\),
\[
 \widehat g(a)=0.
\]
Hence
\begin{equation}
 |\supp\widehat g|
 \le t
 \le r-1.
 \label{eq:complete-ghat-support}
\end{equation}

Let
\[
 c=-\frac{\mu}{2}.
\]
Then
\[
 h(x)=(x-c)^2-c^2.
\]
The set of squares in \(\F_p\), including \(0\), has
\((p+1)/2\) elements.  Consequently
\begin{equation}
 |\supp g|
 \le|\operatorname{im}h|
 =\frac{p+1}{2}.
 \label{eq:complete-g-support}
\end{equation}
If \(g\ne0\), Tao's prime Fourier uncertainty principle gives
\[
 p+1
 \le
 |\supp g|+|\supp\widehat g|.
\]
But the minimum support is at most \(p-1\), so
\[
 r\le\frac{p-1}{2},
\]
and therefore
\[
 |\supp g|+|\supp\widehat g|
 \le
 \frac{p+1}{2}+(r-1)
 \le
 \frac{p+1}{2}+\frac{p-3}{2}
 =p-1,
\]
a contradiction.  Hence
\begin{equation}
 g\equiv0.
 \label{eq:complete-g-zero}
\end{equation}

The fibers of \(h\) are the orbits of the involution
\[
 x\longmapsto2c-x.
\]
The fixed fiber is \(\{c\}\).  If \(c\in R\), then
\(g(h(c))=d_c\ne0\), contradicting \eqref{eq:complete-g-zero}; hence
\(c\notin R\).  Every other fiber has the form
\(\{c-\rho,c+\rho\}\).  If such a fiber met \(R\) in exactly one point,
then its contribution to \(g\) would be a single nonzero coefficient,
again contradicting \(g\equiv0\).  Thus every occupied nonfixed fiber is
occupied at both points, and on that fiber the equation \(g=0\) reads
\[
 d_{c-\rho}+d_{c+\rho}=0.
\]
Since \(|R|=2r\), there are exactly \(r\) occupied two-point fibers.
Therefore
\[
 R=
 \{c\pm\rho_1,\ldots,c\pm\rho_r\}
\]
for distinct radius classes
\[
 \mathcal R=
 \{[\rho_1],\ldots,[\rho_r]\}\subset\mathcal P,
\]
and
\begin{equation}
 d_{c-\rho_j}=-d_{c+\rho_j}
 \qquad(1\le j\le r).
 \label{eq:complete-antisymmetry}
\end{equation}
Put
\[
 q_j:=d_{c+\rho_j}.
\]
This gives \eqref{eq:complete-state-formula}.

The antisymmetry produces one central zero in every \(F\)-fiber:
\[
 F(a,\nu-2ac)=0
 \qquad(a\in\F_p).
\]
Equivalently, every physical finite basis \(\mathcal B_A\) has the
central zero at \(\beta=-\nu-2Ac\).
For
\[
 H_a(t):=F(a,\nu-2ac+t),
\]
the contribution of the pair \(c\pm\rho_j\), using
\eqref{eq:complete-state-formula}, is
\[
 q_j\omega^{-ac^2+a\rho_j^2+ct}
 \bigl(\omega^{t\rho_j}-\omega^{-t\rho_j}\bigr).
\]
Replacing \(t\) by \(-t\) changes the parenthesis by a factor \(-1\)
and changes \(\omega^{ct}\) to \(\omega^{-ct}\).  Summing over \(j\)
gives
\begin{equation}
 H_a(-t)
 =
 -\omega^{-2ct}H_a(t).
 \label{eq:complete-fiber-symmetry}
\end{equation}
Thus every fiber zero set is symmetric about \(t=0\).  The
exceptional fiber \(a=\alpha_i\), which has \(2m_i+1\) zeros, therefore
has the form
\[
 t=0,\quad
 t=\pm e_{i,1},\ldots,\pm e_{i,m_i},
\]
where the classes \([e_{i,\ell}]\) are distinct.  Define
\[
 \mathcal C
 :=
 \{(\alpha_i,[e_{i,\ell}]):
 1\le i\le t,\ 1\le\ell\le m_i\}.
\]
Then
\[
 |\mathcal C|
 =
 \sum_i m_i
 =
 r-1.
\]
The noncentral zero equations are exactly
\[
 M_{\mathcal C,\mathcal R}q=0.
\]
Lemma~\ref{lem:complete-constraint-matrix} shows that this kernel ray is
unique and that all \(q_j\ne0\).

Since the computational basis was chosen to have minimum support
\(2r\), the physical basis \(\mathcal B_A\), with \(A=-\alpha\), corresponding
to every exceptional \(F\)-fiber \(a=\alpha\) has support at least \(2r\):
\[
 p-(2m_\alpha+1)\ge2r.
\]
This is exactly \eqref{eq:complete-capacity}.  Since
\(|\mathcal C|=r-1>0\), some \(m_\alpha\ge1\).  Hence
\[
 M_{p,r}\ge1,
\]
which implies
\[
 2r\le p-3.
\]
Together with the already treated case \(r=1\), this proves
\eqref{eq:allowed-minimum-supports}.

The data are uniquely recoverable from the resulting normalized
projective state.  Indeed,
\[
 c=\frac{1}{2r}\sum_{x\in R}x
\]
because \(2r<p\); the support determines the radius classes
\(\mathcal R\).  Each \(F\)-fiber zero set is a nonempty proper subset of \(\F_p\)
invariant under reflection about its displayed \(b\)-center, and that center
is unique: invariance under reflections about two distinct centers would
imply invariance under a nonzero translation, hence under all translations
of the prime cyclic group, forcing the zero set to be all of \(\F_p\), which
is impossible for a nonzero state.  Thus the centers of the symmetric
\(F\)-fiber zero sets determine the affine line \(b=\nu-2ac\), hence
\(\nu\); and the noncentral symmetric zero pairs determine \(\mathcal C\).
By \eqref{eq:F-to-physical-labels}, the equivalent physical center in
\(\mathcal B_A\) is \(\beta=-\nu-2Ac\).

Conversely, choose data satisfying
\eqref{eq:complete-extremizer-data} and
\eqref{eq:complete-capacity}.  For \(r=1\), the formula gives a
two-point state and direct substitution shows one zero in every finite
basis, so the state is sharp.

Let \(r\ge2\).  By
Lemma~\ref{lem:complete-constraint-matrix}, the constraint matrix has a
unique kernel ray generated by \(q\) with no zero coordinate.  Define
\(\psi\) by \eqref{eq:complete-state-formula}.  Its computational
support has size \(2r\).

Centered antisymmetry gives the \(p\) central zeros
\[
 b=\nu-2ac.
\]
Each constraint \((\alpha,[e])\in\mathcal C\) gives the two additional
zeros
\[
 b=\nu-2\alpha c\pm e.
\]
Thus
\[
 \#Z(F)\ge
 p+2(r-1)
 =
 p+2r-2.
\]
The parabolic zero theorem gives the reverse inequality for support
\(2r\):
\[
 \#Z(F)\le p+2r-2.
\]
Hence the displayed points are exactly all zeros of \(F\), and under
\eqref{eq:F-to-physical-labels} they correspond bijectively to all zero
coefficients in the finite MUBs.  Therefore
\[
 \mathcal S_p(\psi)
 =
 2r+p^2-(p+2r-2)
 =
 p^2-p+2.
\]
The \(F\)-fiber \(a=\alpha\) has exactly \(1+2m_\alpha\) zeros, so the
corresponding physical basis \(\mathcal B_A\), where \(A=-\alpha\), has support
\[
 p-(1+2m_\alpha)\ge2r
\]
by \eqref{eq:complete-capacity}.  Every physical finite basis corresponding
to a nonexceptional \(F\)-fiber has support \(p-1>2r\).  Therefore the chosen computational basis is indeed
a minimum-support basis.  This proves the converse.
\end{proof}

\begin{corollary}[Possible minimum supports]
\label{cor:possible-minimum-supports}
For \(p\ge5\), the minimum support of a sharp state is exactly one of
\[
 \boxed{
 2,4,6,\ldots,p-3.
 }
\]
Every value in this list occurs.
\end{corollary}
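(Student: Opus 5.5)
The corollary has two halves: a necessity statement and an existence statement. Necessity is already contained in Theorem~\ref{thm:complete-extremizer-classification}. That theorem shows the minimum support of a sharp state is $k=2r$ with $1\le r\le (p-3)/2$, i.e.\ $k\in\{2,4,\ldots,p-3\}$. The bound itself came from two facts: Theorem~\ref{thm:extremizer-structure}(4) gives $k\le p-3$ when $k\ge4$, and parity follows from part~(1) of that theorem. So the work lies entirely in the existence half.

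For existence I will use the converse direction of Theorem~\ref{thm:complete-extremizer-classification}. It suffices, for each $r$ with $1\le r\le (p-3)/2$, to exhibit admissible data $(c,\nu,\mathcal R,\mathcal C)$ satisfying the capacity inequality \eqref{eq:complete-capacity}; the converse then produces a sharp state of minimum support exactly $2r$.

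\textbf{Case $r=1$.} The two-point states $e_u-\omega^te_v$ from the proof of Theorem~\ref{thm:exact-mub-rigorous} already work, and equivalently $\mathcal C=\emptyset$ is admissible.

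\textbf{Case $r\ge2$.} I would take $c=\nu=0$. Since $r\le(p-3)/2<\kappa=(p-1)/2$, I can choose any $r$ distinct classes $\mathcal R\subset\mathcal P$. The real constraint is $\mathcal C$: I need $r-1$ distinct pairs $(\alpha,[e])$ with every multiplicity $m_\alpha\le M_{p,r}=(p-2r-1)/2$. Because $r\le(p-3)/2$, we have $M_{p,r}\ge1$. Spreading the pairs over distinct first coordinates meets this: take $\mathcal C=\{(\alpha_i,[1]):1\le i\le r-1\}$ with distinct $\alpha_i\in\F_p$. This is possible since $r-1<p$, and then each $m_\alpha\le1\le M_{p,r}$. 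The classes $[e]$ must also lie in $\mathcal P$, which they do since $[1]\in\mathcal P$.

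The converse then yields the state \eqref{eq:complete-state-formula} with total support $p^2-p+2$ and minimum support $2r$. Nothing further is needed: Lemma~\ref{lem:complete-constraint-matrix} already guarantees that the kernel vector has no zero entries, so the computational support is exactly $2r$. The classification converse also already verified that no other basis has smaller support, so $2r$ is genuinely the minimum.

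The main obstacle would be checking the capacity inequality at the endpoint $2r=p-3$, where $M_{p,r}=1$. There every exceptional fiber may carry only one pair, so the choice of distinct $\alpha_i$ above is forced rather than merely convenient. Since distinct $\alpha_i$ always suffice, the argument goes through uniformly.
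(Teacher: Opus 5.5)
Your proposal is correct and matches the paper's proof. The paper likewise takes necessity from Theorem~\ref{thm:complete-extremizer-classification}, and gets existence from that theorem's converse by choosing $r$ radius classes and placing the $r-1$ constraints in distinct $a$-fibers, so that each occupancy $m_\alpha$ is at most $1\le M_{p,r}$.
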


\begin{proof}
Necessity follows from
Theorem~\ref{thm:complete-extremizer-classification}.  Conversely, let
\(2r\le p-3\).  Then
\[
 M_{p,r}=\frac{p-2r-1}{2}\ge1.
\]
Choose \(r\) radius classes and choose the \(r-1\) constraints in
distinct \(a\)-fibers, so every occupancy \(m_\alpha\) is at most one.
The capacity condition holds, and
Theorem~\ref{thm:complete-extremizer-classification} produces a sharp
state of minimum support \(2r\).
\end{proof}

\begin{corollary}[Sharpness at every even support]
\label{cor:even-parabolic-sharpness}
For every prime $p\ge5$ and every integer \(r\) with
\[
 1\le r\le\frac{p-1}{2},
\]
there exist $R\subset\mathbb F_p$ with $|R|=2r$ and nonzero complex
coefficients $(c_x)_{x\in R}$ such that
\[
 \#Z\!\left(\sum_{x\in R}c_x\omega^{ax^2+bx}\right)=p+2r-2.
\]
Hence the even-support bound in Theorem~\ref{thm:intro-parabolic} is sharp
throughout its full range.
\end{corollary}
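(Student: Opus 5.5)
The range splits at $2r\le p-3$. For $2r\le p-3$, Corollary~\ref{cor:possible-minimum-supports} already supplies a sharp state of minimum support $2r$. By Theorem~\ref{thm:complete-extremizer-classification}, the parabolic sum of that state has exactly $p+2r-2$ zeros. Its computational support $R$ has size $2r$ and all its coefficients are nonzero, so it is the required example. Concretely, I take $R=\{c\pm\rho_1,\dots,c\pm\rho_r\}$ with distinct radius classes, put the $r-1$ constraints in distinct $a$-fibers, and let $q$ generate the kernel of $M_{\mathcal C,\mathcal R}$. Lemma~\ref{lem:complete-constraint-matrix} then guarantees $q_j\ne0$. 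The lower bound $p+2r-2$ on the number of zeros comes from the $p$ central zeros together with the $2(r-1)$ symmetric pairs, and Theorem~\ref{thm:parabolic-zero-rigorous} gives the matching upper bound.

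The only remaining value is the endpoint $2r=p-1$, that is $r=(p-1)/2$. Here the MUB classification is unavailable, because the capacity $M_{p,r}$ equals $0$. However, the constraint-matrix construction never used the minimum-support condition; that condition was needed only to make the computational basis a minimum-support MUB. I therefore repeat the converse construction without capacity constraints. I choose all $r=(p-1)/2$ classes in $\mathcal P$, so that $R=\F_p\setminus\{c\}$, and any $r-1$ distinct pairs $(\alpha,[e])$. For example, I can take a single fiber $\alpha=0$ and $r-1$ distinct classes $[e]$, which is possible since $r-1<\kappa$; distinct fibers would work equally well. Lemma~\ref{lem:complete-constraint-matrix} holds for every $r\ge2$ with $r\le\kappa$, because its proof relies only on Theorem~\ref{thm:parabolic-zero-rigorous} at support $2s\le p-3$. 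It therefore yields $q$ with all coordinates nonzero. The antisymmetric coefficients $\pm q_j$ then give a sum $F$ with actual support $2r$, the $p$ central zeros $b=-2ac$, and the $2(r-1)$ additional zeros $(\alpha,\pm e)$, all pairwise distinct. Hence $\#Z(F)\ge p+2r-2$, and Theorem~\ref{thm:parabolic-zero-rigorous} again gives equality.

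The case $r=1$ is the two-point state $e_u-\omega^te_v$ from Theorem~\ref{thm:exact-mub-rigorous}, which has exactly $p$ zeros.

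The main point needing care is the endpoint. There I must check that the kernel vector is nowhere zero, so that the actual support really is $p-1$, and that the noncentral zeros are distinct from each other and from the central line. Both checks are straightforward: $e\ne0$ separates the noncentral zeros from the central line, and distinct pairs $(\alpha,[e])$ give disjoint pairs of zeros $\pm e$.
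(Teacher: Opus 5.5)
Your proof is correct. For $2r\le p-3$ it matches the paper: take a sharp state of minimum support $2r$ from Corollary~\ref{cor:possible-minimum-supports} and read off $p+2r-2$ zeros.

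At the endpoint $2r=p-1$ you take a genuinely different route. The paper does not reuse the constraint-matrix machinery there. It writes down the explicit example $R=\F_p^\times$, $c_x=1-\omega^{-x}$, and computes the zero set directly by character orthogonality and the Gauss-sum evaluation. This gives $p-2$ zeros on the fiber $a=0$ and the single zero $b=\tfrac12$ on each fiber $a\ne0$. That check is self-contained and exact, and needs neither Lemma~\ref{lem:complete-constraint-matrix} nor the upper bound of Theorem~\ref{thm:parabolic-zero-rigorous}.

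You observe that Lemma~\ref{lem:complete-constraint-matrix} uses no capacity hypothesis and stays valid at $r=\kappa$, since its proof invokes Theorem~\ref{thm:parabolic-zero-rigorous} only at support $2s\le p-3$. This is correct. It gives more than the paper: a whole family of endpoint extremizers, parametrized by $c$, $\nu$ and an arbitrary $(r-1)$-set $\mathcal C$, handled uniformly with the range $2r\le p-3$. The cost is that it relies on the full zero theorem twice.

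In fact the paper's example belongs to your family. Since $c_{-x}=-\omega^{x}c_x$, removing the linear phase $\nu=\tfrac12$ makes the coefficients odd about $c=0$, and the central line is $b=\tfrac12$. The $p-3$ noncentral zeros on the fiber $a=0$ are then the pairs $\tfrac12\pm e$ with $[e]\ne[\tfrac12]$. This is exactly your choice of placing all $r-1$ constraints in the single fiber $\alpha=0$.
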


\begin{proof}
First suppose \(2r\le p-3\).  By
Corollary~\ref{cor:possible-minimum-supports}, choose a sharp MUB state whose
minimum support is \(2r\), and normalize one minimum-support basis to
\(\mathcal B_\infty\).  If \(N\) is the number of zero coefficients among
the \(p\) finite quadratic bases, then
\[
 p^2-p+2=2r+p^2-N,
\]
so \(N=p+2r-2\).  By \eqref{eq:mub-parabola-label-map} and the bijection
\((A,\beta)\mapsto(-A,-\beta)\), these are exactly the zeros of the
associated parabolic sum.

It remains to treat the endpoint \(2r=p-1\).  Take
\[
 R=\F_p^\times,
 \qquad
 c_x=1-\omega^{-x}\quad(x\in R).
\]
All these coefficients are nonzero.  Since the coefficient at \(x=0\)
would be zero, we may extend the sum to all \(x\in\F_p\) and write
\[
 F(a,b)
 =
 \sum_{x\in\F_p}\omega^{ax^2+bx}
 -
 \sum_{x\in\F_p}\omega^{ax^2+(b-1)x}.
\]
For \(a=0\), character orthogonality gives
\[
 F(0,b)=p\,\mathbf 1_{\{b=0\}}-p\,\mathbf 1_{\{b=1\}},
\]
so the fiber \(a=0\) contains exactly \(p-2\) zeros.  For \(a\ne0\),
completing the square gives
\[
 \sum_{x\in\F_p}\omega^{ax^2+bx}
 =
 G(a)\omega^{-b^2/(4a)},
 \qquad
 G(a):=\sum_{x\in\F_p}\omega^{ax^2}\ne0.
\]
Hence \(F(a,b)=0\) if and only if
\[
 b^2=(b-1)^2,
\]
that is, \(b=\tfrac12\).  Thus every one of the \(p-1\) nonzero
\(a\)-fibers contributes exactly one zero, and
\[
 |Z(F)|=(p-2)+(p-1)=2p-3=p+(p-1)-2.
\]
This is equality in the even-support bound for \(2r=p-1\).
\end{proof}

\begin{corollary}[Enumeration of extremizers]
\label{cor:complete-extremizer-count}
For
\[
 1\le r\le\frac{p-3}{2},
\]
put
\[
 M=M_{p,r}=\frac{p-2r-1}{2},
 \qquad
 \kappa=\frac{p-1}{2},
\]
and define
\begin{equation}
 P_{p,r}(z,u)
 :=
 \sum_{j=0}^{M-1}\binom{\kappa}{j} z^j
 +
 u\binom{\kappa}{M}z^M.
 \label{eq:counting-polynomial}
\end{equation}
Then the number \(N_{2r}(p)\) of sharp projective rays whose minimum
support is \(2r\) is
\begin{equation}
 \boxed{
 N_{2r}(p)
 =
 (p+1)p^2\binom{\kappa}{r}
 \,[z^{r-1}]
 \int_0^1
 P_{p,r}(z,u)^p\,du.
 }
 \label{eq:complete-count-formula}
\end{equation}
The integral is coefficientwise in the polynomial variable \(u\).
Consequently the total number of sharp rays is
\begin{equation}
 N_{\mathrm{sharp}}(p)
 =
 \sum_{r=1}^{(p-3)/2}N_{2r}(p).
 \label{eq:total-sharp-count}
\end{equation}
\end{corollary}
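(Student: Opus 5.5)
The plan is to count pairs (sharp ray, minimum-support basis) and then correct for rays that have several minimum-support bases. The integral over $u$ is exactly this correction.

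\emph{Step 1: counting pairs.} Fix a sharp projective ray $\psi$ of minimum support $2r$, and fix one of its minimum-support bases $\mathcal B_j$. For each of the $p+1$ labels $j$, choose once and for all a standard-MUB-permuting unitary $U_j$ with $U_j\mathcal B_j=\mathcal B_\infty$ (Lemma~\ref{lem:standard-mub-transitivity}). Then $U_j\psi$ is a normalized sharp ray. By Theorem~\ref{thm:complete-extremizer-classification}, $U_j\psi$ determines unique data $(c,\nu,\mathcal R,\mathcal C)$ satisfying \eqref{eq:complete-capacity}. Conversely, every such datum produces a normalized sharp ray whose computational basis has minimum support $2r$. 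Since $U_j$ is a bijection on rays, the pairs $(\psi,j)$ with $\mathcal B_j$ of minimum support are in bijection with $\{0,\dots,p\}\times\{\text{admissible data}\}$. Hence the number of such pairs is $(p+1)\cdot\#\{\text{data}\}$.

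\emph{Step 2: counting the data.} The choice of $(c,\nu)$ gives $p^2$, and the choice of $\mathcal R$ gives $\binom{\kappa}{r}$. The set $\mathcal C$ amounts to choosing, independently for each $\alpha\in\F_p$, a subset of $\mathcal P$ of size $m_\alpha\le M$, with $\sum_\alpha m_\alpha=r-1$. Its generating function is the $p$-fold product of $\sum_{j\le M}\binom{\kappa}{j}z^j$, and we extract the coefficient of $z^{r-1}$.

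\emph{Step 3: the overcount.} By the zero-set description \eqref{eq:complete-physical-zero-set}, in the normalized frame the physical basis $\mathcal B_{-\alpha}$ has support $p-1-2m_\alpha$. This is $\ge 2r$ by capacity, with equality exactly when $m_\alpha=M$. Nonexceptional bases have support $p-1>2r$. Thus the ray has exactly $1+s$ minimum-support bases, where $s=\#\{\alpha:m_\alpha=M\}$.

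This count $1+s$ is intrinsic to the ray, since support sizes are invariant under the unitaries. Therefore every datum in the $j$-normalization of $\psi$ has the same $s$, and $\psi$ is counted exactly $1+s$ times among the pairs. Marking each saturated fiber with $u$ gives the polynomial $P_{p,r}(z,u)$. A datum with $s$ saturated fibers contributes $u^s$, and $\int_0^1u^s\,du=1/(1+s)$ supplies the required weight. Summing yields \eqref{eq:complete-count-formula}, and summing over $r$ gives \eqref{eq:total-sharp-count}.

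\emph{Edge cases and main obstacle.} For $r=1$ the factor $[z^0]$ equals $1$ and no fiber is saturated, since $M=(p-3)/2\ge1$. The main point needing care is Step 1. One must check that the uniqueness in Theorem~\ref{thm:complete-extremizer-classification} is relative to the fixed unitary $U_j$, so that the map from pairs to labelled data is a genuine bijection, and that no two distinct labels $j$ produce the same pair. Both follow because the labels $j$ are distinct and each $U_j$ is fixed in advance.
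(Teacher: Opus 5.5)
Your proposal is correct and follows the paper's proof. Both arguments count pairs (ray, designated minimum-support basis) via fixed normalizing unitaries, count the admissible data $(c,\nu,\mathcal R,\mathcal C)$ fiberwise with $P_{p,r}$, and remove the $(1+s)$-fold overcount using $\int_0^1u^s\,du=1/(s+1)$. Your explicit remark that $1+s$ is an invariant of the ray, so every normalization of $\psi$ receives the same weight, makes precise a point the paper leaves implicit.
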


\begin{proof}
Fix one standard MUB and designate it as the chosen minimum-support
basis.  By the classification theorem, a normalized extremizer of
minimum support \(2r\) is determined uniquely by:

\begin{enumerate}
\item a center \(c\in\F_p\);
\item a parameter \(\nu\in\F_p\);
\item \(r\) radius classes chosen from the \(\kappa=(p-1)/2\) classes in
\(\mathcal P\);
\item a constraint set
\[
 \mathcal C\subset\F_p\times\mathcal P,
 \qquad
 |\mathcal C|=r-1,
\]
whose occupancy in each \(a\)-fiber is at most \(M\).
\end{enumerate}

For a fixed \(a\), choosing exactly \(j\) constraints in that fiber can
be done in
\[
 \binom{\kappa}{j}
\]
ways.  The polynomial
\[
 P_{p,r}(z,u)
\]
records this choice, with \(z\) marking the number of constraints and
\(u\) marking a saturated fiber, that is, a fiber with occupancy
exactly \(M\).  Hence
\[
 [z^{r-1}u^s]P_{p,r}(z,u)^p
\]
counts the constraint sets with exactly \(s\) saturated finite fibers.

For a finite fiber with occupancy \(m_\alpha\), the classification
theorem gives support
\[
 p-(1+2m_\alpha).
\]
By definition
\[
 M=\frac{p-2r-1}{2},
\]
so this support equals \(2r\) if and only if \(m_\alpha=M\); if
\(m_\alpha<M\), it is strictly larger than \(2r\).  Thus the saturated \(F\)-fibers are exactly those whose corresponding
physical bases \(\mathcal B_A\), with \(A=-\alpha\), also have minimum support.  A
constraint set with exactly \(s\) saturated fibers therefore produces exactly
\[
 1+s
\]
minimum-support bases: the chosen computational basis and those \(s\) finite
bases.  Consequently a count in which one designates a minimum-support basis
counts the same projective ray exactly \(1+s\) times.

Fix once and for all, for each standard MUB, one MUB-permuting unitary
that sends it to the computational basis.  This gives a bijection between
rays with that designated minimum-support basis and the normalized rays
counted above.  Hence there are \(p+1\) choices for the designated basis,
and the remaining choices contribute
\[
 p^2\binom{\kappa}{r}.
\]
Therefore
\[
 N_{2r}(p)
 =
 (p+1)p^2\binom{\kappa}{r}
 \sum_{s\ge0}
 \frac{1}{s+1}
 [z^{r-1}u^s]P_{p,r}(z,u)^p.
\]
Since
\[
 \frac1{s+1}=\int_0^1u^s\,du,
\]
this is exactly \eqref{eq:complete-count-formula}.
\end{proof}

\begin{corollary}[Small-prime counts]
\label{cor:small-prime-sharp-counts}
The enumeration formula gives
\[
 N_{\mathrm{sharp}}(5)=300,
\]
as also follows directly from the two-point case, and
\[
 N_{\mathrm{sharp}}(7)=13524.
\]
In dimension eleven, the entries below are indexed by the minimum support
size \(k\):
\begin{center}
\begin{tabular}{c@{\qquad}rrrr}
\toprule
minimum support \(k\) & 2 & 4 & 6 & 8 \\
\midrule
\(N_k(11)\) & 7260 & 798600 & 20763600 & 37434375 \\
\bottomrule
\end{tabular}
\end{center}
and hence
\[
 \boxed{
 N_{\mathrm{sharp}}(11)=59003835.
 }
\]
These displayed values were independently checked by exact integer/rational
coefficient expansion of the generating polynomial in
\eqref{eq:counting-polynomial} and coefficient extraction in
\eqref{eq:complete-count-formula}; no floating-point zero test is involved.
A short verification script is included with the source files.
\end{corollary}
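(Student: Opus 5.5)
The plan is to evaluate the enumeration formula of Corollary~\ref{cor:complete-extremizer-count} directly, with exact rational coefficient extraction, for $p=5,7,11$ and every admissible $r$ with $1\le r\le (p-3)/2$. For each pair $(p,r)$ I would set $M=M_{p,r}=(p-2r-1)/2$ and $\kappa=(p-1)/2$, form $P_{p,r}(z,u)$, and compute
\[
 \sum_{s\ge0}\frac{1}{s+1}[z^{r-1}u^s]P_{p,r}(z,u)^p .
\]
I would then multiply by $(p+1)p^2\binom{\kappa}{r}$ and sum over $r$ as in \eqref{eq:total-sharp-count}. Only the coefficient of $z^{r-1}$ is needed, so $P^p$ can be truncated in $z$ at degree $r-1$. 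This keeps every computation small: the largest case is $p=11$, $r=4$, which requires the coefficient of $z^3$.

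The case $r=1$ is uniform. Here $z^{0}$ is extracted, and $M\ge1$ whenever $p\ge5$, so the constant term of $P$ is $1$ and carries no $u$. The count is therefore $N_2(p)=(p+1)p^2\kappa=(p+1)p^2(p-1)/2$. This gives $300$ for $p=5$, matching the direct two-point description of Theorem~\ref{thm:complete-extremizer-classification}: there are $p+1$ choices of basis, $\binom p2$ supports and $p$ phases. It gives $7\cdot6\cdot... $ wait, for $p=7$ it gives $1176$, and for $p=11$ it gives $12\cdot121\cdot5=7260$, which matches the table.

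For $p=5$ the range of $r$ is only $r=1$, so the total is $300$. For $p=7$, the value $r=2$ has $M=1$ and $\kappa=3$, so $P=1+3uz$. The coefficient of $z^1$ in $P^7$ is $21u$ with $s=1$, contributing $21/2$. This gives $N_4(7)=8\cdot49\cdot3\cdot21/2=12348$, and the total is $1176+12348=13524$.

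For $p=11$ we have $\kappa=5$, and the cases are as follows. For $r=2$, $M=3$, so the coefficient $[z]P^{11}=11\cdot5=55$ carries no $u$; then $N_4=12\cdot121\cdot10\cdot55=798600$. For $r=3$, $M=2$ and $P=1+5z+10uz^2$. Then $[z^2]P^{11}=\binom{11}{2}25+11\cdot10u=1375+110u$, giving $1375+55=1430$, and $N_6=1452\cdot10\cdot1430=20763600$. For $r=4$, $M=1$ and $P=1+5uz$. Then $[z^3]=\binom{11}{3}125u^3=20625u^3$, which is weighted by $1/4$, so $N_8=1452\cdot5\cdot20625/4=37434375$. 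Finally I would add the four entries of the table.

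The only real obstacle is bookkeeping: the $u$-weights must correctly mark the saturated fibers, where the occupancy equals $M$, and the range $r\le(p-3)/2$ must be respected. The arithmetic itself is elementary. I would cross-check the $r=1$ row against the independent two-point classification.
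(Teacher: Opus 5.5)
Your proposal is correct and follows the paper's route: the corollary is an exact evaluation of \eqref{eq:complete-count-formula}, and your hand extraction of $[z^{r-1}]\int_0^1 P_{p,r}(z,u)^p\,du$ reproduces every stated value. You get $1176+12348=13524$ for $p=7$ and $7260+798600+20763600+37434375=59003835$ for $p=11$, and your $r=1$ row $(p+1)p^2(p-1)/2$ agrees with the direct two-point count; the only thing to fix is the stray ``wait'' left in your $r=1$ paragraph.
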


\begin{remark}[Geometry of equality]
\label{rem:complete-equality-geometry}
The classification shows that every sharp state is governed by two
reflection structures.

First, its minimum-support coordinates occur in pairs
\[
 c\pm\rho_j
\]
and the dephased coefficients are odd under reflection about \(c\).
Second, in \(F\)-coordinates the fiberwise zero sets are symmetric about
the affine line
\[
 b=\nu-2ac.
\]
Equivalently, in the physical finite MUB labels the centers lie on
\[
 \beta=-\nu-2Ac.
\]
The noncentral zero pairs form an arbitrary admissible set
\[
 \mathcal C\subset\F_p\times\mathcal P
\]
subject only to the capacity condition
\eqref{eq:complete-capacity}; the universal nonvanishing of the
constraint-matrix maximal minors then produces the unique extremizing
ray.
\end{remark}

\section{Coding and matroid consequences}
\label{sec:code-matroid}

Let
\[
 \mathcal V
 =
 \bigcup_{a\in\F_p\cup\{\infty\}}\mathcal B_a
\]
be the \(p(p+1)\)-element vector configuration formed by the standard
complete MUB, with one chosen unit representative for every projective
point.  Linear-dependence and spark questions for structured finite frames
and MUB-related configurations have been studied from several directions
\cite{CasazzaKutyniok,BodmannPaulsen,AlexeevCahillMixon,Blanchfield,
JafarpourDuarteCalderbank}.  Closely related frame-robustness questions under
erasures, including kernel-sparsity and structured full-spark constructions,
appear in \cite{LiFrameErasures,LiTNS2026}.  The invariants below are naturally
dual: they measure minimum Hamming weight in
the analysis range, equivalently maximum hyperplane sections and minimum
cocircuits of this particular complete-MUB configuration.

Define the analysis code
\begin{equation}
 \mathcal C_p
 :=
 \left\{
 \bigl(\langle v,\psi\rangle\bigr)_{v\in\mathcal V}:
 \psi\in\C^p
 \right\}
 \subset\C^{p(p+1)}.
 \label{eq:mub-analysis-code}
\end{equation}
This is a complex linear code of length \(p(p+1)\); we use standard coding
terminology as in \cite{HuffmanPless}.  The analysis map is injective, because
its coordinates coming from the computational basis are precisely the
computational coordinates of \(\psi\) (up to the fixed inner-product
convention).  Hence \(\dim_\C\mathcal C_p=p\).

\begin{corollary}[Minimum distance of the complete-MUB analysis code]
\label{cor:mub-code-distance}
For every prime \(p\ge5\),
\begin{equation}
 \boxed{
 d_{\min}(\mathcal C_p)=p^2-p+2.
 }
 \label{eq:mub-code-distance}
\end{equation}
Moreover the projective minimum-weight codewords are classified
completely by
Theorem~\ref{thm:complete-extremizer-classification}.
\end{corollary}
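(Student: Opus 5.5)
The plan is to identify codewords of \(\mathcal C_p\) with their states and read off Hamming weight as total support. The analysis map \(\psi\mapsto(\langle v,\psi\rangle)_{v\in\mathcal V}\) is injective and linear. So every nonzero codeword comes from a unique nonzero \(\psi\in\C^p\). Its Hamming weight is the number of \(v\in\mathcal V\) with \(\langle v,\psi\rangle\ne0\). Splitting \(\mathcal V\) into the \(p+1\) bases gives
\[
 \operatorname{wt}\bigl((\langle v,\psi\rangle)_{v\in\mathcal V}\bigr)
 =\sum_{j\in\F_p\cup\{\infty\}}s_{\mathcal B_j}(\psi)
 =\mathcal S_p(\psi).
\]
The choice of unit representative for each projective point only rescales coordinates by nonzero phases, so it does not affect this count. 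Hence
\[
 d_{\min}(\mathcal C_p)=\min_{\psi\ne0}\mathcal S_p(\psi)=T_s(p).
\]

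Next I invoke Theorem~\ref{thm:exact-mub-rigorous}, which gives \(T_s(p)=p^2-p+2\) for every prime \(p\ge5\); this proves \eqref{eq:mub-code-distance}. For instance, the two-point state \(e_r-\omega^te_s\) gives a codeword attaining this weight.

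For the classification statement, two codewords define the same projective codeword exactly when their states define the same projective ray, because the analysis map is linear and injective. So minimum-weight projective codewords correspond bijectively to rays with \(\mathcal S_p(\psi)=p^2-p+2\). Theorem~\ref{thm:complete-extremizer-classification} describes exactly these rays, once a minimum-support basis is normalized by a MUB-permuting unitary as in Lemma~\ref{lem:standard-mub-transitivity}. The count of such codewords is Corollary~\ref{cor:complete-extremizer-count}.

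No step is genuinely difficult. The only point needing care is that the inner-product convention and the chosen phases of the representatives do not change which coordinates vanish.
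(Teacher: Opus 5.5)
Your proposal is correct and follows essentially the same route as the paper: you identify the Hamming weight of the codeword of \(\psi\) with \(\mathcal S_p(\psi)\), invoke Theorem~\ref{thm:exact-mub-rigorous} for the value \(p^2-p+2\), and transfer the equality classification of Theorem~\ref{thm:complete-extremizer-classification} to the minimum-weight projective codewords. Your extra remarks on injectivity of the analysis map and on why the chosen phase representatives do not matter make explicit points that the paper leaves implicit.
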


\begin{proof}
For
\[
 c(\psi)
 :=
 \bigl(\langle v,\psi\rangle\bigr)_{v\in\mathcal V},
\]
its Hamming weight is exactly the total support of \(\psi\) in the
\(p+1\) bases:
\[
 \operatorname{wt}_H(c(\psi))
 =
 \sum_{a\in\F_p\cup\{\infty\}}
 s_{\mathcal B_a}(\psi)
 =
 \mathcal S_p(\psi).
\]
Theorem~\ref{thm:exact-mub-rigorous} therefore gives
\[
 d_{\min}(\mathcal C_p)=p^2-p+2.
\]
The equality cases are precisely the sharp states classified in
Theorem~\ref{thm:complete-extremizer-classification}.
\end{proof}

Let \(M_p\) be the complex vector matroid represented by the columns
\(\mathcal V\); for standard matroid terminology and duality we refer to
\cite{Oxley}.  Its ground set has size
\[
 |\mathcal V|=p(p+1)
\]
and rank \(p\).

\begin{corollary}[Maximum hyperplane sections and cogirth]
\label{cor:mub-matroid-cogirth}
For every prime \(p\ge5\), the largest number of vectors of
\(\mathcal V\) contained in a complex hyperplane of \(\C^p\) is
\begin{equation}
 \boxed{2p-2.}
 \label{eq:mub-max-hyperplane}
\end{equation}
Equivalently, the cogirth of the represented matroid \(M_p\) is
\begin{equation}
 \boxed{
 g^*(M_p)=p^2-p+2.
 }
 \label{eq:mub-cogirth}
\end{equation}
The maximum hyperplane sections, equivalently the minimum cocircuits,
are classified by
Theorem~\ref{thm:complete-extremizer-classification}.
\end{corollary}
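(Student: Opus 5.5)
The plan is to translate hyperplane sections into the zero sets of analysis vectors, and then read both claims off Theorem~\ref{thm:exact-mub-rigorous}. A complex hyperplane of \(\C^p\) has the form \(\psi^\perp=\{w:\langle w,\psi\rangle=0\}\) for a nonzero \(\psi\), unique up to scalar. Hence the vectors of \(\mathcal V\) lying in that hyperplane are exactly the coordinates where the codeword \(c(\psi)\) vanishes. Their number is therefore
\[
 p(p+1)-\operatorname{wt}_H(c(\psi))=p(p+1)-\mathcal S_p(\psi).
\]
Maximizing over hyperplanes is the same as minimizing \(\mathcal S_p\). Theorem~\ref{thm:exact-mub-rigorous} gives the minimum \(p^2-p+2\), so the maximum section size is \(p^2+p-(p^2-p+2)=2p-2\). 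The value is attained, for instance, by the two-point state \(e_r-\omega^te_s\): its hyperplane contains \(p-2\) computational vectors and exactly one vector from each of the \(p\) finite bases.

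For the cogirth I will use the standard matroid fact from \cite{Oxley}. Cocircuits of a represented matroid are the complements of its hyperplanes (flats of rank \(p-1\)). For a vector representation, every flat of rank \(p-1\) has the form \(\mathcal V\cap H\) for a linear hyperplane \(H\) with \(\operatorname{rank}(\mathcal V\cap H)=p-1\). Every linear hyperplane \(H\) satisfies \(\mathcal V\cap H\subseteq\) some flat of rank at most \(p-1\). Thus the minimum cocircuit size equals \(p(p+1)\) minus the maximum size of a rank-\((p-1)\) flat.

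The one point to check is that the maximum hyperplane sections really span \(H\), i.e.\ have rank \(p-1\). Otherwise a section of size \(2p-2\) could fail to be a matroid hyperplane. I would handle this in two steps. First, any section \(\mathcal V\cap H\) lies in some matroid hyperplane \(\mathcal F\) of rank \(p-1\), and \(\mathcal F=\mathcal V\cap H'\) for the hyperplane \(H'\) it spans. Since \(|\mathcal F|\ge|\mathcal V\cap H|\), the maximum over matroid hyperplanes equals the maximum over linear hyperplane sections, namely \(2p-2\). It follows that
\[
 g^*(M_p)=p(p+1)-(2p-2)=p^2-p+2,
\]
in agreement with the code identity \(d_{\min}(\mathcal C_p)=g^*(M_p)\) for the cocircuit/codeword-support correspondence. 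Second, every maximizing section is itself a flat of rank \(p-1\). If its rank were smaller, enlarging it to a rank-\((p-1)\) flat would only increase the count, and that count is already maximal. This is consistent with the direct check: in the two-point example the \(p-2\) computational vectors together with a Fourier-basis vector already span the \((p-1)\)-dimensional space \(H\).

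Finally, the classification statement follows because maximum sections correspond bijectively, via \(H=\psi^\perp\), to sharp projective rays \([\psi]\). Those rays are exactly the ones described in Theorem~\ref{thm:complete-extremizer-classification}, equivalently the minimum-weight codewords of Corollary~\ref{cor:mub-code-distance}. The main obstacle, such as it is, is the rank and spanning bookkeeping between linear hyperplane sections and matroid hyperplanes; the rest is the translation \(\#(\mathcal V\cap\psi^\perp)=p(p+1)-\mathcal S_p(\psi)\).
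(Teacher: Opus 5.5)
Your proposal is correct and follows essentially the paper's argument. The identity $|\mathcal V\cap\psi^\perp|=p(p+1)-\mathcal S_p(\psi)$ together with Theorem~\ref{thm:exact-mub-rigorous} gives the value $2p-2$. The paper shows that a maximal section is a rank-$(p-1)$ flat by adjoining some $v\in\mathcal V\setminus\operatorname{span}(X)$ and extending to a hyperplane that contains $2p-1$ vectors, which is the same strict-enlargement argument as yours (and, like yours, it then uses the resulting rank-$(p-1)$ flats for the minimum-cocircuit classification).
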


\begin{proof}
Every complex hyperplane can be written
\[
 H_\psi
 =
 \{x\in\C^p:\langle x,\psi\rangle=0\}
\]
for some nonzero \(\psi\).  Hence
\[
 |\mathcal V\cap H_\psi|
 =
 p(p+1)-\mathcal S_p(\psi).
\]
The exact support theorem gives
\[
 |\mathcal V\cap H_\psi|
 \le
 p(p+1)-(p^2-p+2)
 =
 2p-2.
\]
Sharp states attain equality, so
\eqref{eq:mub-max-hyperplane} is exact.

For a represented matroid, a cocircuit is the complement in the ground
set of a matroid hyperplane.  Every matroid hyperplane is of the form
\(\mathcal V\cap H\) for the linear hyperplane \(H\) spanned by that
rank-\((p-1)\) flat, so its cardinality is at most \(2p-2\).
Conversely, let \(H_\psi\) be a linear hyperplane attaining the maximum
intersection size \(2p-2\), and put \(X=\mathcal V\cap H_\psi\).  We first
show that \(X\) has matroid rank \(p-1\).  If its rank were at most \(p-2\),
then, because \(\mathcal V\) spans \(\C^p\), one could choose
\(v\in\mathcal V\setminus\operatorname{span}(X)\).  The space
\(\operatorname{span}(X\cup\{v\})\) would have dimension at most \(p-1\)
and could be extended to a complex linear hyperplane \(H'\).  Then
\(H'\cap\mathcal V\) would contain the \(2p-2\) elements of \(X\) together
with \(v\), contradicting the maximal linear-hyperplane intersection bound.
Thus \(\operatorname{rank}_{M_p}X=p-1\).

Moreover
\[
 \operatorname{cl}_{M_p}(X)
 =\mathcal V\cap\operatorname{span}(X).
\]
Since \(\operatorname{span}(X)\subseteq H_\psi\) and both spaces now have
dimension \(p-1\), they are equal.  Hence
\(\operatorname{cl}_{M_p}(X)=\mathcal V\cap H_\psi=X\), so \(X\) itself is
a matroid hyperplane of cardinality \(2p-2\).  Therefore the maximum
matroid-hyperplane size is exactly \(2p-2\), and the minimum cocircuit size is
\[
 p(p+1)-(2p-2)
 =
 p^2-p+2.
\]
The maximum sections and minimum cocircuits have the same ground-set
intersections, so the classification again follows from the equality
theorem.
\end{proof}

\begin{remark}
The parabolic zero theorem may therefore be read simultaneously as a
sharp support uncertainty principle, a minimum-distance theorem for a
highly structured cyclotomic code, and a cogirth theorem for the
complete-MUB vector matroid.  The equality classification identifies
the extremal objects in all three languages.
\end{remark}

\section{Further directions}
\label{sec:discussion}

For the standalone parabolic problem, the present theorem is attained for
every even \(2\le k\le p-1\), whereas Section~\ref{sec:odd-refinements} shows
that sparse odd support can be much more rigid.  The natural next problem is
to determine the optimal fixed-\(k\) zero count for general odd support and
to understand the remaining early-jet configurations.  The parity-sensitive theorem leaves two natural extremal questions: determine the exact fixed-support maximum for odd \(k\ge7\), and characterize the even-support sums whose large zero sets contain an affine zero line.

Two features of the proof are genuinely prime-dimensional.  First, the
minimal-support reduction ultimately rests on the sharp Fourier uncertainty
principle on $\F_p$.  Second, the jet argument uses the total ramification of
$p$ in $\Q(\omega)$ and only the range below the critical order $p-1$.
Prime-power fields therefore present a qualitatively different problem:
subfield structure creates additional Fourier extremizers, while the local
cyclotomic model must be replaced by a higher-dimensional additive
structure.  Determining the correct analogue of the parabolic zero theorem
in that setting is a natural next question.

The cyclotomic-jet mechanism also suggests a broader problem: determine
which algebraic frequency varieties admit comparably strong control of
successive jet components and hence ambient-size-independent zero bounds.
We do not pursue that general theory here; the present paper isolates the
quadratic case, where transverse linearity and two-point fibers combine to
produce the sharp parity phenomenon.

\paragraph{Declaration of AI-assisted tools.}
OpenAI's ChatGPT was used as an auxiliary tool for proof checking,
computations, and language editing.  The author takes full responsibility for
the mathematical content and conclusions of the manuscript.

\end{document}